\numberwithin{equation}{section}
\theoremstyle{plain}
\newtheorem{thm}{Theorem}[section]
\newtheorem{prop}[thm]{Proposition}
\newtheorem{lem}[thm]{Lemma}
\theoremstyle{definition}
\newtheorem*{defn}{Definition}
\newtheorem{exa}{Example}
\newtheorem*{exa*}{Example}
\theoremstyle{remark}
\newtheorem*{rem}{Remark}
\crefname{thm}{Theorem}{Theorems}
\crefname{prop}{Proposition}{Propositions}
\crefname{lem}{Lemma}{Lemmas}
\crefname{coro}{Corollary}{Corollaries}
\crefname{add}{Addendum}{Addendums}
\crefname{alg}{Algorithm}{Algorithms}
\crefname{proc}{Procedure}{Procedures}
\crefname{exe}{Exercise}{Exercises}
\crefname{exa}{Example}{Examples}
\crefname{prob}{Problem}{Problems}
\crefname{section}{Section}{Sections}
\crefname{subsection}{Section}{Sections}
\crefname{appendix}{Appendix}{Appendices}
\DeclareMathOperator{\Var}{Var}
\DeclareMathOperator{\Cov}{Cov}
\DeclareMathOperator{\as}{as}
\DeclareMathOperator{\oas}{as\ast}
\DeclareMathOperator*{\conv}{\mathchoice{%
	\,\longrightarrow\,}{
	\rightarrow}{
	\rightarrow}{
	\rightarrow}
}
\begin{document}

\begin{frontmatter}
\title{Asymptotic Theory of $L$\hyp{}Statistics and Integrable Empirical Processes\thanksref{T1}}
\runtitle{$L$\hyp{}Statistics and Integrable Processes}
\thankstext{T1}{The previous version was circulated with the title ``Switching to the New Norm: From Heuristics to Formal Tests using Integrable Empirical Processes.''}

\begin{aug}
\author{\fnms{Tetsuya} \snm{Kaji}\ead[label=e1]{tkaji@chicagobooth.edu}}

\runauthor{T. Kaji}

\affiliation{The University of Chicago}

\address{The University of Chicago\\Booth School of Business\\
5807 South Woodlawn Avenue\\Chicago, IL 60637\\
\printead{e1}\\
\phantom{E-mail:\ }}
\end{aug}

\begin{abstract}
This paper develops asymptotic theory of integrals of empirical quantile functions with respect to random weight functions, which is an extension of classical $L$\hyp{}statistics.
They appear when sample trimming or Winsorization is applied to asymptotically linear estimators.
The key idea is to consider empirical processes in the spaces appropriate for integration.
First, we characterize weak convergence of empirical distribution functions and random weight functions in the space of {\em bounded integrable} functions.
Second, we establish the delta method for empirical quantile functions as {\em integrable} functions.
Third, we derive the delta method for $L$\hyp{}statistics.
Finally, we prove weak convergence of their bootstrap processes, showing validity of nonparametric bootstrap.
\end{abstract}

\begin{keyword}[class=MSC]
\kwd{62E20}
\kwd{62G30}
\end{keyword}

\begin{keyword}
\kwd{$L$\hyp{}statistics}
\kwd{empirical processes}
\kwd{quantile processes}
\kwd{functional delta methods}
\kwd{nonparametric bootstrap}
\end{keyword}

\end{frontmatter}


\section{Introduction}

We derive the asymptotic distribution of the statistics of the form
\[
	\int_0^1m(\mathbb{Q}_n)d\mathbb{K}_n,
\]
where $m:\mathbb{R}\to\mathbb{R}$ is a known continuously differentiable function, $\mathbb{Q}_n:(0,1)\to\mathbb{R}$ an empirical quantile function of a random variable $X_i$, and $\mathbb{K}_n:(0,1)\to\mathbb{R}$ a random Lipschitz function that depends on $\{X_i\}$.
This is a generalization of the classical $L$\hyp{}statistics \cite{ms1992,s1997,s2017,sw1986} to allow for integration with respect to random processes $\mathbb{K}_n$.%
\footnote{\cite{s1997} allows integration on a random interval but not with respect to a random process.}

This type of statistics appears, for example, when sample trimming or Winsorization is applied to asymptotically linear estimators.
Let us collectively call sample trimming and Winsorization {\em sample adjustments}.
If sample adjustments are made conditional on the values of $X_i$, $\mathbb{K}_n$ is a nonrandom function and it falls within the framework of classical $L$\hyp{}statistics.
If sample adjustments are made on variables other than $X_i$, $\mathbb{K}_n$ becomes random and it affects the asymptotic distribution of the $L$\hyp{}statistics.
In economics, this occurs as the parameters of interest (what $L$\hyp{}statistics estimate) often differ from the variables whose outliers we are concerned.
In such cases, dependence of $\mathbb{K}_n$ can be difficult to handle directly.

This paper gives both high\hyp{}level and low\hyp{}level conditions for weak convergence of the $L$\hyp{}statistics, derives the asymptotic distribution formula, and verifies validity of nonparametric bootstrap.
The innovation of this paper lies in considering empirical processes in the space of integrable functions.
The literature on empirical processes has largely focused on uniform convergence irrespective of the intended statistical application.
As $L$\hyp{}statistics are integrals of empirical processes, we (partly) renounce uniform convergence and instead require integrability, which buys us substantial benefits in dealing with $L$\hyp{}statistics.
\footnote{In applying the empirical process theory to $L$\hyp{}statistics, Van der Vaart \cite[Chapter 22]{v1998} states that ``[this approach] is preferable in that it applies to more general statistics, but it{\ldots}does not cover the simplest $L$\hyp{}statistic: the sample mean.'' Our empirical process theory overcomes this problem.}


Our theoretical development is summarized as follows.
By integration by parts, we expect
\begin{multline*}
	\sqrt{n}\biggl[\int m(\mathbb{Q}_n)d\mathbb{K}_n-\int m(Q)dK\biggr]\\
	\begin{aligned}
	&=\int\sqrt{n}[m(\mathbb{Q}_n)-m(Q)]d\mathbb{K}_n+\int \!m(Q)d[\sqrt{n}(\mathbb{K}_n-K)]\\
	&\approx\int\sqrt{n}[m(\mathbb{Q}_n)-m(Q)]dK-\int\sqrt{n}(\mathbb{K}_n-K)dm(Q).
	\end{aligned}
\end{multline*}
First, we consider $\sqrt{n}(\mathbb{F}_n-F)$ and $\sqrt{n}(\mathbb{K}_n-K)$ as elements in the space of bounded integrable functions with respect to appropriate measures and derive conditions for weak convergence therein (\cref{AppendixA}).
Second, we establish the functional delta method for the ``inverse map,'' $F\mapsto m(F^{-1})=m(Q)$, from the space of bounded integrable functions to the space of integrable functions, which shows weak convergence of $\sqrt{n}[m(\mathbb{Q}_n)-m(Q)]$ as an integrable process (\cref{AppendixB}).%
\footnote{This paper is presumably the first to show weak convergence of (possibly unbounded) empirical quantile processes in $L_1$ on the untruncated domain $(0,1)$.}
Third, we develop the functional delta method for the map, $(Q,K)\mapsto\int m(Q)dK$, from the spaces of integrable and bounded integrable functions to a Euclidean space, establishing weak convergence of $L$\hyp{}statistics (\cref{AppendixC}).
Finally, we develop conditions for nonparametric bootstrap for the processes and $L$\hyp{}statistics (\cref{AppendixD}).

The theory of this paper was originally motivated by the following problem of formalizing outlier robustness analyses in economics.

\begin{exa}[Outlier Robustness Analysis] \label{exa:1}
Applied researchers often want to examine whether a small portion of outliers affect the regression outcomes \cite{ajkkm2016,ajr2001,anrr2016,abb2010,frw2007}.
The common heuristic practice in economics is to compare two estimators $\hat{\beta}_1$ and $\hat{\beta}_2$, where $\hat{\beta}_1$ is estimated with the full sample and $\hat{\beta}_2$ with the sample that excludes outliers, against the standard error of $\hat{\beta}_1$.
However, since $\hat{\beta}_1$ and $\hat{\beta}_2$ share largely overlapping samples, their difference tends to be small simply because of their strong positive correlation.
To account for this, it is more appropriate to compare the difference $\hat{\beta}_1-\hat{\beta}_2$ to its own variance, as opposed to the marginal variance of $\hat{\beta}_1$.
This calls for the joint distribution of $\hat{\beta}_1$ and $\hat{\beta}_2$.

Consider linear regression $y_i=x_i\beta+\varepsilon_i$ with $\mathbb{E}[x_i\varepsilon_i]=0$.
The ordinary least squares (OLS) estimator of $\beta$ is $\hat{\beta}_1=\bigl(\frac{1}{n}\sum_{i=1}^n x_i^2\bigr)^{-1}\frac{1}{n}\sum_{i=1}^n x_iy_i$, so its asymptotic distribution depends on that of the average of $x_iy_i$.
However, $x_iy_i$ is usually not the quantity whose outliers are of natural concern, but rather, $x_i$ \cite{ajr2001}, $y_i$ \cite{ajkkm2016,bdh2014}, or $\hat{\varepsilon}_i$ \cite{anrr2016} is.
Then, conditional on the value of $x_iy_i$, the probability that the observation is deemed as an outlier is probabilistic.

Suppose we remove the 2\% tail observations of $x_i$ and $y_i$.
Let $w_i$ be $1$ if $x_{(\lceil0.02n\rceil)}\leq x_i\leq x_{(\lceil0.98n\rceil)}$ and $y_{(\lceil0.02n\rceil)}\leq y_i\leq y_{(\lceil0.98n\rceil)}$, and $0$ otherwise.%
\footnote{Winsorization can also be accommodated by appropriately defining $w_i$.}
The outlier\hyp{}removed estimator $\hat{\beta}_2$ is $\bigl(\frac{1}{n}\sum_{i=1}^n x_i^2w_i\bigr)^{-1}\frac{1}{n}\sum_{i=1}^n x_iy_iw_i$.
Through the quantile transform, we can write
\[
	\hat{\beta}_1=\biggl(\frac{1}{n}\sum_{i=1}^nx_i^2\biggr)^{-1}\int_0^1\mathbb{Q}_n(u)du, \quad
	\hat{\beta}_2=\biggl(\frac{1}{n}\sum_{i=1}^nx_i^2w_i\biggr)^{-1}\int_0^1\mathbb{Q}_n(u)d\mathbb{K}_n(u),
\]
where $\mathbb{Q}_n$ is the empirical quantile function of $x_iy_i$ and $\mathbb{K}_n$ a random weight function whose derivative is $w_i$ for $u\in(\mathbb{F}_n(x_iy_i)-1/n,\mathbb{F}_n(x_iy_i)]$ for the empirical distribution function $\mathbb{F}_n$ of $x_iy_i$.
Then $\mathbb{K}_n$ is random for each fixed value of $x_iy_i$, which affects the asymptotic distribution of the integrals.

In \cref{suppB}, we revisit the outlier robustness analysis in \cite{anrr2016}.
\end{exa}

The rest of the paper is organized as follows.
\cref{sec:setup} defines the setup.
\cref{AppendixA} develops the theory of weak convergence of bounded integrable processes.
\cref{AppendixB} establishes Hadamard differentiability of the inverse map.
\cref{AppendixC} shows Hadamard differentiability of the $L$\hyp{}statistics.
\cref{AppendixD} verifies validity of nonparametric bootstrap.
\cref{sec:proofs} contains proofs.
\cref{app} contains supporting lemmas and an empirical application.

\section{The Setup} \label{sec:setup}

Let $X_i$ be i.i.d.\ scalar random variables and $w_i$ be possibly random weights whose distribution is bounded but can depend on all of $\{X_i\}$. Consider a statistic of the form
\(
	\hat{\beta}\vcentcolon=\frac{1}{n}\sum_{i=1}^n m(X_i)w_i=\frac{1}{n}\sum_{i=1}^n m(X_{(i)})w_{(i)},
\)
where $m$ is a continuously differentiable function, $X_{(i)}$ is an order statistic such that $X_{(1)}\leq X_{(2)}\leq\cdots\leq X_{(n)}$, and $w_{(i)}$ is ordered according to the order of $X_i$.
Let $\mathbb{Q}_n(u)\vcentcolon=X_{(i)}$ and $d\mathbb{K}_n(u)\vcentcolon=w_{(i)}$, $u\in(\frac{i-1}{n},\frac{i}{n}]$, be the empirical quantile function of $X_i$ and the random weight function.
With these,
\[
	\hat{\beta}=\int_0^1 m(\mathbb{Q}_n(u))d\mathbb{K}_n(u).
\]
Denote by $\mathbb{F}_n(x)\vcentcolon=\frac{1}{n}\sum_{i=1}^n\mathbbm{1}\{X_i\leq x\}$ the empirical distribution function of $X_i$ and define the inverse of a nondecreasing function $f:\mathbb{R}\to\mathbb{R}$ by $f^{-1}(y)\vcentcolon=\inf\{x\in\mathbb{R}:f(x)\geq y\}$.
Then, $\mathbb{Q}_n$ equals $\mathbb{F}_n^{-1}$.

The aim of this paper is to derive the joint distribution of finitely many such quantities $(\hat{\beta}_1,\dots,\hat{\beta}_d)$ for possibly different $m$, $\{X_i\}$, and $\{w_i\}$.
For this, we proceed in four steps:
\begin{enumerate}[i.]
	\item Give conditions for convergence of $\sqrt{n}(\mathbb{F}_n-F)$ and $\sqrt{n}(\mathbb{K}_n-K)$ to Gaussian processes as bounded integrable processes.
	\item Show convergence of $\sqrt{n}(\mathbb{Q}_n-Q)$ to a Gaussian process as an integrable process via a functional delta method from $\mathbb{F}_n$ to $\mathbb{Q}_n$.
	\item Show convergence of $L$\hyp{}statistics via a functional delta method from $(\mathbb{Q}_n,\mathbb{K}_n)$ to $\int m(\mathbb{Q}_n)d\mathbb{K}_n$.
	\item Show bootstrap convergence for $\sqrt{n}(\mathbb{F}_n-F)$ and $\sqrt{n}(\mathbb{K}_n-K)$.
\end{enumerate}

\section{Convergence of Bounded Integrable Processes} \label{AppendixA}

Define the space of bounded integrable functions as follows.

\begin{defn}
Let $(T, \mathcal{T}, \mu)$ be a measure space where $T$ is an arbitrary set, $\mathcal{T}$ a $\sigma$\hyp{}field on $T$, and $\mu$ a $\sigma$\hyp{}finite signed measure on $\mathcal{T}$.
Let $\mathbb{L}_\mu$ be the space of bounded and $\mu$\hyp{}integrable functions $z : T \to \mathbb{R}$ with the norm
\[
	\|z\|_{\mathbb{L}_\mu} \vcentcolon= \|z\|_T \vee \|z\|_\mu \vcentcolon= \biggl( \sup_{t\in T} |z(t)| \biggr) \vee \biggl( \int_T |z| |d\mu| \biggr),
\]
where $|d\mu|$ represents integration with respect to the total variation measure.%
\end{defn}

For sums of i.i.d.\ random variables such as $\sqrt{n}(\mathbb{F}_n-F)$, it is straightforward to prove weak convergence in $\mathbb{L}_\mu$ by the combination of classical central limit theorems (CLTs) \cite{vw1996}.

\begin{prop} \label{prop:A:F}
Let $(\mathbb{R},\mathfrak{B}(\mathbb{R}),\mu)$ be a $\sigma$-finite Borel measure on $\mathbb{R}$.
For a probability distribution $F$ on $\mathbb{R}$ such that $\int_{\mathbb{R}}\sqrt{F(1-F)}|d\mu|<\infty$, the empirical process $\sqrt{n}(\mathbb{F}_n-F)$ converges weakly in $\mathbb{L}_\mu$ to a Gaussian process with mean zero and covariance function $\Cov(x,y)=F(x\wedge y)-F(x)F(y)$.
%
\end{prop}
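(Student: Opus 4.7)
The plan is to deduce weak convergence in $\mathbb{L}_\mu$ from classical Donsker's theorem in $\ell^\infty(\mathbb{R})$, using the integrability hypothesis to control both norms in the tails. Since $\{\mathbbm{1}_{(-\infty,t]}:t\in\mathbb{R}\}$ is a VC (hence universal Donsker) class, standard empirical process theory yields
\[
Z_n\vcentcolon=\sqrt{n}(\mathbb{F}_n-F)\Rightarrow\mathbb{G}_F\quad\text{in }\ell^\infty(\mathbb{R}),
\]
with $\mathbb{G}_F$ a tight Borel Gaussian element of $\ell^\infty(\mathbb{R})$ having the stated covariance. This gives convergence of finite-dimensional distributions and asymptotic tightness of $\{Z_n\}$ in the sup norm.

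For the $L^1$ side, Jensen's inequality and Fubini yield the workhorse estimate
\[
\mathbb{E}\|Z_n\|_\mu=\int\mathbb{E}|Z_n(x)||d\mu|(x)\leq\int\sqrt{F(x)(1-F(x))}|d\mu|(x)<\infty,
\]
using exactly the integrability hypothesis; the same bound shows $\mathbb{G}_F\in L^1(|\mu|)$ almost surely, so $\mathbb{G}_F$ is a legitimate random element of $\mathbb{L}_\mu$. To promote this to tightness in the full $\mathbb{L}_\mu$ norm I would truncate: given $\eta,\epsilon>0$, pick $\delta>0$ such that $\int_{B^c}\sqrt{F(1-F)}|d\mu|<\eta\epsilon$, where $B\vcentcolon=\{x:\delta\leq F(x)\leq 1-\delta\}$. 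Markov then bounds the $L^1$ mass of $Z_n$ on $B^c$ by $\epsilon$ with probability at least $1-\eta$, uniformly in $n$. The sup norm on $B^c$ is controlled by the modulus-of-continuity of $\mathbb{G}_F$: since $\mathbb{G}_F=B\circ F$ for a Brownian bridge $B$ vanishing at $0$ and $1$, $\sup_{x\in B^c}|\mathbb{G}_F(x)|\to 0$ in probability as $\delta\to 0$, and continuous mapping applied to the $\ell^\infty$-Donsker convergence transfers this to $Z_n$ asymptotically. On $B$, the estimate $|\mu|(B)\leq\int_B\sqrt{F(1-F)}|d\mu|/\sqrt{\delta(1-\delta)}<\infty$ combined with $\|z\mathbbm{1}_B\|_{\mathbb{L}_\mu}\leq(1\vee|\mu|(B))\|z\|_\infty$ shows that sup-norm tightness already implies $\mathbb{L}_\mu$ tightness of $\{Z_n\mathbbm{1}_B\}$. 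Combining the pieces yields asymptotic tightness of $Z_n$ in $\mathbb{L}_\mu$, and with finite-dimensional convergence this delivers $Z_n\Rightarrow\mathbb{G}_F$ in $\mathbb{L}_\mu$.

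The main obstacle is precisely this combination: tightness in $\ell^\infty$ and in $L^1(|\mu|)$ separately do not suffice for tightness in the stronger joint norm $\|\cdot\|_\infty\vee\|\cdot\|_\mu$, because a common $\epsilon$-net is required, not merely separate $\epsilon$-nets in each norm. The resolution exploits the specific structure of $Z_n$: its sup-norm tail is small by the Brownian-bridge modulus of continuity pushed back via continuous mapping, its $L^1$ tail is small by Markov and the integrability hypothesis, and on the ``core'' region $B$ the two norms are equivalent because $|\mu|(B)<\infty$.
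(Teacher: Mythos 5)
Your proposal is correct, but it takes a genuinely different route from the paper. The paper proves weak convergence separately in $L_\infty$ (via the classical Donsker theorem, \cite[Example 2.5.4]{vw1996}) and in $L_1(\mu)$ (via the Banach-space CLT of \cite[Proposition 2.1.11]{vw1996}, after verifying the tail and weak-second-moment conditions for $Z(x)=\mathbbm{1}\{X\le x\}-F(x)$), and then combines them using \cref{lem:joint}: weak convergence in $d_1$ and in $d_2$ to a common limit that is tight in both implies weak convergence in $d_1\vee d_2$. That lemma does not proceed by constructing a common $\varepsilon$-net; instead it shows that the intersection of a $d_1$-compact and a $d_2$-compact is $(d_1\vee d_2)$-compact, and then invokes the separating-algebra characterization \cite[Lemma 1.3.13]{vw1996} to upgrade to weak convergence. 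You, by contrast, prove asymptotic $\mathbb{L}_\mu$-tightness directly by truncating to $B_\delta=\{\delta\le F\le 1-\delta\}$, where $|\mu|(B_\delta)<\infty$ makes $\|\cdot\|_{\mathbb{L}_\mu}$ and $\|\cdot\|_\infty$ equivalent, and controlling the two tails separately (Markov plus the integrability hypothesis for the $L_1$ piece; the Brownian bridge modulus of continuity pushed through the portmanteau inequality for the sup piece), then assembling via a standard approximation argument. Your route avoids the Banach-space CLT entirely and uses only Donsker plus first-moment estimates, at the cost of more bookkeeping. One remark on your final paragraph: you are right that asymptotic tightness of the sequence in $\ell^\infty$ and in $L_1(|\mu|)$ separately does not mechanically yield asymptotic tightness in the joint norm, but the paper's \cref{lem:joint} shows the obstruction disappears if one works at the level of weak convergence to a tight limit rather than sequence tightness — the intersection-of-compacts observation together with the separating-algebra trick sidesteps the common-$\varepsilon$-net issue you flag, making the ``obvious'' combination viable after all.
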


\begin{rem}
For an increasing function $m$, $\int_{-\infty}^\infty\sqrt{F(1-F)}dm<\infty$ is equivalent to $\|m(X)\|_{2,1}\vcentcolon=\int_0^\infty\sqrt{\Pr(|m(X)|>t)}dt<\infty$ \cite{bgm1999}.
Moreover, if $m(X)$ has a $(2+c)$th moment for some $c>0$, we have $\|m(X)\|_{2,1}<\infty$.
\end{rem}

For processes not given as sums of i.i.d.\ variables such as $\sqrt{n}(\mathbb{K}_n-K)$, we need direct conditions for weak convergence.
As in classical literature, we characterize weak convergence in $\mathbb{L}_\mu$ by asymptotic tightness plus marginal convergence.
Following \cite{vw1996}, we consider a {\em net} $X_\alpha$ indexed by an arbitrary directed set, rather than a sequence $X_n$ indexed by natural numbers.
We also allow the sample space to be different for each element in a net, $X_\alpha:\Omega_\alpha\to\mathbb{L}_\mu$.
Finally, we allow each element in the net to be not necessarily measurable.
When we write $X(t)$ for a map $X : \Omega \to \mathbb{L}_\mu$, $t$ is understood to be an element of $T$ and we regard $X(t)$ as a map from $\Omega$ to $\mathbb{R}$ indexed by $T$; when we explicitly use $\omega \in \Omega$ in the discussion, we write $X(t,\omega)$.

\begin{thm} \label{FfdAo8VYYjsJz}
Let $X_\alpha : \Omega_\alpha \to \mathbb{L}_\mu$ be arbitrary. Then, $X_\alpha$ converges weakly to a tight limit if and only if $X_\alpha$ is asymptotically tight and marginals $(X_\alpha(t_1), \dots, X_\alpha(t_k))$ converge weakly for every finite subset $t_1, \dots, t_k$ of $T$.
If $X_\alpha$ is asymptotically tight and its marginals converge weakly to the marginals $(X(t_1), \dots, X(t_k))$ of a stochastic process $X$, then there is a version of $X$ with sample paths in $\mathbb{L}_\mu$ and $X_\alpha \leadsto X$.
\end{thm}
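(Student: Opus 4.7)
The plan is to adapt the abstract skeleton of \cite{vw1996}'s characterization of weak convergence in $\ell^\infty(T)$ to our space $\mathbb{L}_\mu$. The argument is purely metric-space-theoretic and relies on only three structural properties of $\mathbb{L}_\mu$: it is a normed (hence metric) space, every point evaluation $\pi_t : z \mapsto z(t)$ is continuous on it, and the family $\{\pi_t : t \in T\}$ separates points. All three follow immediately from the inequality $|\pi_t(z)| \leq \|z\|_T \leq \|z\|_{\mathbb{L}_\mu}$.

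For the necessity direction, asymptotic tightness of $X_\alpha$ is a standard consequence of weak convergence to a tight Borel limit via the outer-measure Portmanteau theorem in \cite{vw1996}, while marginal convergence follows by applying the continuous mapping theorem to the continuous linear map $(\pi_{t_1},\ldots,\pi_{t_k}):\mathbb{L}_\mu\to\mathbb{R}^k$.

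For the sufficiency direction, asymptotic tightness combined with the Prohorov-type theorem for nets supplies, from every subnet of $X_\alpha$, a further subnet converging weakly in $\mathbb{L}_\mu$ to some tight Borel law. Marginal convergence pins down the finite-dimensional distributions of any such subnet limit; since $\{\pi_t\}$ separates points of $\mathbb{L}_\mu$, a monotone-class argument shows that any two tight Borel measures on $\mathbb{L}_\mu$ with the same finite-dimensional marginals must coincide. Hence every weakly convergent subnet has the same limit, and a standard net-topological argument upgrades this to weak convergence of the full net. For the last assertion, the common subnet limit furnishes a tight Borel version $\tilde{X}$ of $X$ supported on a $\sigma$-compact subset of $\mathbb{L}_\mu$, so its sample paths lie in $\mathbb{L}_\mu$ by construction and $X_\alpha \leadsto \tilde{X}$.

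The main obstacle is not analytic depth but the careful handling of the outer-measure framework for nets of possibly non-measurable maps $X_\alpha$ defined on different sample spaces $\Omega_\alpha$. In particular, one must verify that asymptotic tightness in $\mathbb{L}_\mu$ interacts correctly with the net-indexed Prohorov theorem, and that the continuous mapping theorem applies in its outer form to the not-necessarily-measurable coordinates $\pi_t\circ X_\alpha$. Both points are standard within the \cite{vw1996} machinery once one notes that the only properties of $\ell^\infty(T)$ used there are continuity and point-separation of the coordinate projections, which hold verbatim for $\mathbb{L}_\mu$ by the remark above.
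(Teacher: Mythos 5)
Your outline follows the paper's own route almost exactly: asymptotic tightness plus Prohorov for relative compactness, then uniqueness of subnet limits because tight Borel laws on $\mathbb{L}_\mu$ are determined by their finite-dimensional marginals (the paper's \cref{lem:marginal}, which it proves by exactly the separation-of-points/monotone-class logic you describe).

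The one concrete omission is the asymptotic-measurability step. Prohorov's theorem in the outer-probability framework (\cite[Theorem~1.3.9]{vw1996}) requires the net to be asymptotically tight \emph{and} asymptotically measurable, and you invoke it citing only tightness. Marginal weak convergence gives asymptotic measurability of each coordinate $\pi_t\circ X_\alpha$, but lifting this to asymptotic measurability of $X_\alpha$ as an $\mathbb{L}_\mu$-valued map is a separate claim; the paper isolates it as \cref{lem:tight} (the $\mathbb{L}_\mu$-analogue of \cite[Lemma~1.5.2]{vw1996}) and proves it \emph{before} applying Prohorov. You gesture at the outer-measure subtleties in your last paragraph, but you should state explicitly that asymptotic tightness reduces global asymptotic measurability to coordinatewise asymptotic measurability, and that the latter follows from marginal convergence; without that step the Prohorov invocation is not licensed. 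Note also that the paper flags that neither \cref{lem:tight} nor \cref{lem:marginal} is literally a corollary of the $\ell^\infty(T)$ versions, since $C_b(\mathbb{L}_\mu)$ is strictly larger than $C_b$ of either the sup-norm or the $L_1(\mu)$-norm space, so the arguments must be re-run rather than cited.
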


Weak convergence of marginals can be established by classical results such as CLTs in Euclidean spaces.
The question is asymptotic tightness. We characterize this with uniform equicontinuity and equiintegrability.


\begin{defn}
For a $\mu$\hyp{}measurable semimetric $\rho$ on $T$,%
\footnote{We call a semimetric {\em $\mu$\hyp{}measurable} if every open set induced is measurable with respect to $\mu$.}
the net $X_\alpha : \Omega_\alpha \to \mathbb{L}_\mu$ is {\em asymptotically uniformly $\rho$\hyp{}equicontinuous and $(\rho,\mu)$\hyp{}equiintegrable in probability} if for every $\varepsilon,\eta>0$ there exists $\delta>0$ such that
\begin{multline*}
	\limsup_\alpha P^\ast \biggl( \sup_{t\in T} \biggl[ \biggl( \sup_{\rho(s,t)<\delta} |X_\alpha(s)-X_\alpha(t)| \biggr) \\
	\vee \biggl( \int_{0<\rho(s,t)<\delta} |X_\alpha(s)| |d\mu(s)| \biggr) \biggr] > \varepsilon \biggr) < \eta.
\end{multline*}
\end{defn}


The following result characterizes asymptotic tightness in $\mathbb{L}_\mu$.

\begin{thm} \label{dJMhMPoGhEIJvZ}
The following are equivalent.
\begin{enumerate}[i.]
	\item \label{thmA6:1} A net $X_\alpha : \Omega_\alpha \to \mathbb{L}_\mu$ is asymptotically tight.
	\item \label{thmA6:2} $X_\alpha(t)$ is asymptotically tight in $\mathbb{R}$ for every $t \in T$, $\|X_\alpha\|_\mu$ is asymptotically tight in $\mathbb{R}$, and for every $\varepsilon,\eta>0$ there exists a finite $\mu$\hyp{}measurable partition $T=\bigcup_{i=1}^k T_i$ such that
	\begin{multline} \label{partition}
		\limsup_\alpha P^\ast \Biggl( \biggl[ \sup_{1\leq i\leq k} \sup_{s,t\in T_i} |X_\alpha(s)-X_\alpha(t)| \biggr] \\
		\vee \sum_{i=1}^k \inf_{x\in\mathbb{R}} \int_{T_i} |X_\alpha-x| |d\mu| > \varepsilon \Biggr) < \eta.
	\end{multline}
	\item \label{thmA6:3} $X_\alpha(t)$ is asymptotically tight in $\mathbb{R}$ for every $t \in T$ and there exists a $\mu$\hyp{}measurable semimetric $\rho$ on $T$ such that $(T,\rho)$ is totally bounded and $X_\alpha$ is asymptotically uniformly $\rho$\hyp{}equicontinuous and $(\rho,\mu)$\hyp{}equiintegrable in probability.
\end{enumerate}
\end{thm}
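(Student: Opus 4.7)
The plan is to extend the classical Arzelà--Ascoli-style characterization of asymptotic tightness in $\ell^\infty(T)$ (Theorem 1.5.7 of \cite{vw1996}) to the space $\mathbb{L}_\mu$, whose norm is the maximum of the sup-norm and the $L^1(|\mu|)$-norm. The backbone will be a deterministic compactness criterion for $\mathbb{L}_\mu$: a subset $K \subset \mathbb{L}_\mu$ is totally bounded iff it is uniformly bounded in $\|\cdot\|_T$ and $\|\cdot\|_\mu$ and, for every $\varepsilon > 0$, admits a finite $\mu$-measurable partition $T = \bigcup_{i=1}^k T_i$ such that, uniformly over $z \in K$, both $\sup_i \sup_{s,t \in T_i} |z(s) - z(t)| \leq \varepsilon$ and $\sum_i \inf_{x \in \mathbb{R}} \int_{T_i} |z - x| |d\mu| \leq \varepsilon$. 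This is established by approximating each $z$ by a piecewise-constant function built from near-optimal constants $x_i$ on each cell: the first estimate controls the error in $\|\cdot\|_T$, the second in $\|\cdot\|_\mu$, and the uniform bound in $\|\cdot\|_T$ confines the constants to a bounded interval, giving a finite-dimensional approximating family whose total boundedness is automatic.

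\textbf{(ii) $\Leftrightarrow$ (iii).} For (iii) $\Rightarrow$ (ii), cover the totally bounded space $(T, \rho)$ by finitely many $\rho$-balls of radius $\delta$ and refine them into a $\mu$-measurable partition; asymptotic uniform $\rho$-equicontinuity handles the sup-oscillation term, while $(\rho, \mu)$-equiintegrability, together with $\sigma$-finite truncation and tightness of $\|X_\alpha\|_\mu$, handles the $L^1$-oscillation term (take $x$ near $X_\alpha(t_i)$ for a representative $t_i \in T_i$ on the truncating set, and $x = 0$ off it). For (ii) $\Rightarrow$ (iii), take nested partitions witnessing (\ref{partition}) with $\varepsilon_k = 2^{-k}$ and failure probability at most $2^{-k}$, and define the semimetric $\rho(s,t) = \sum_k 2^{-k} \mathbbm{1}\{s,t \text{ lie in different cells of the } k\text{-th partition}\}$; total boundedness of $(T, \rho)$ and $\mu$-measurability of $\rho$-balls follow from the construction, and (\ref{partition}) translates cell-by-cell into asymptotic $\rho$-equicontinuity and $(\rho, \mu)$-equiintegrability in probability.

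\textbf{(i) $\Leftrightarrow$ (ii).} For (ii) $\Rightarrow$ (i), given $\varepsilon, \eta > 0$, combine the partition condition with marginal tightness of $X_\alpha(t_i^\ast)$ at chosen points $t_i^\ast \in T_i$ and with tightness of $\|X_\alpha\|_\mu$ to produce, with probability at least $1 - \eta$ eventually, a set $K_{\varepsilon,\eta} \subset \mathbb{L}_\mu$ that satisfies the hypotheses of the deterministic compactness criterion and contains $X_\alpha$ up to a $\|\cdot\|_{\mathbb{L}_\mu}$-perturbation of order $\varepsilon$. Iterating along $\varepsilon_k \downarrow 0$ and intersecting yields the totally bounded set required by asymptotic tightness. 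Conversely, if $X_\alpha$ is asymptotically tight, feeding the approximating compact sets $K_\varepsilon$ into the deterministic criterion produces the partitions required for (\ref{partition}); marginal tightness and tightness of $\|X_\alpha\|_\mu$ are then automatic because $z \mapsto z(t)$ and $z \mapsto \|z\|_\mu$ are continuous on $\mathbb{L}_\mu$.

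\textbf{Main obstacle.} The genuinely new ingredient is the $L^1$-oscillation term $\sum_i \inf_{x} \int_{T_i} |X_\alpha - x| |d\mu|$, which has no analogue in the classical $\ell^\infty$-theory and interacts nontrivially with the $\sigma$-finite (possibly infinite) total variation of $\mu$. In the direction (iii) $\Rightarrow$ (ii), the $\rho$-partition contains a fixed finite number of cells but each may carry arbitrarily large $|\mu|$-mass, so one cannot simply bound the integral by sup-oscillation times $|\mu|(T_i)$; the $(\rho,\mu)$-equiintegrability condition must first be used to confine the $\|X_\alpha\|_\mu$-mass outside a finite-$|\mu|$-measure core, and only then does $\rho$-equicontinuity close the estimate on the core. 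Keeping this truncation rigorous, and verifying that the semimetric $\rho$ constructed in (ii) $\Rightarrow$ (iii) induces $\mu$-measurable open balls, are the most technically delicate points.
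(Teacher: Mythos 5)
Your overall routing differs from the paper's: you prove (ii) $\Leftrightarrow$ (iii) and (i) $\Leftrightarrow$ (ii) separately, whereas the paper runs the cycle (ii) $\Rightarrow$ (i) $\Rightarrow$ (iii) $\Rightarrow$ (ii), so that the semimetric for (iii) is extracted not from the partitions of (ii) but from the compact exhaustion $K_1 \subset K_2 \subset \cdots$ guaranteed by (i). The difference is not cosmetic. The paper's semimetric is
\(
\rho_m(s,t) = \sup_{z \in K_m} d(s,t;z), \quad d(s,t;z) = |z(s)-z(t)| \vee \int_T |z|\,\mathbbm{1}\{z(s)\wedge z(t)\leq z\leq z(s)\vee z(t)\}\mathbbm{1}\{z(s)\neq z(t)\}\,|d\mu|,
\)
where the indicator $\mathbbm{1}\{z(s)\neq z(t)\}$ forces $\rho(s,t)=0$ exactly on the plateaus of the limit functions and makes $\rho(s,t)$ \emph{large} when $s$ is an atom where some $z \in K_m$ is large; this is precisely what makes the punctured ball $\{s : 0 < \rho(s,t) < \delta\}$ avoid both plateau mass and heavy atoms, so that $\int_{0<\rho(s,t)<\delta}|X_\alpha|\,|d\mu|$ becomes controllable. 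Your $\rho(s,t)=\sum_k 2^{-k}\mathbbm{1}\{s,t \text{ separated by } P_k\}$ has no analogue of this; it is built from partition membership alone.

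This is where the gap sits. You assert that ``(\ref{partition}) translates cell-by-cell into $(\rho,\mu)$-equiintegrability,'' but the partition condition controls $\inf_x \int_{T_i}|X_\alpha - x|\,|d\mu|$, while equiintegrability demands $\int_{0<\rho(s,t)<\delta}|X_\alpha|\,|d\mu|$ be small. These do not match: if $X_\alpha \approx c \neq 0$ on a cell $T_i$ with sizeable finite $\mu$-mass, the first quantity is near $0$ while the second is near $|c|\mu(T_i)$. With your $\rho$, the punctured ball around $t \in T_i$ is $\text{cell}_M(t)\setminus\bigcap_k\text{cell}_k(t)$, which can carry essentially all of $\mu(T_i^{(M)})$ once the partitions ever subdivide $T_i$, and nothing in (\ref{partition}) forces $\max_i \mu(T_i^{(M)}) \to 0$ over the cells that \emph{do} get subdivided (for nearly-constant $X_\alpha$, (\ref{partition}) is satisfied by arbitrary partitions). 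The argument can likely be repaired — on cells of infinite $\mu$-mass, (\ref{partition}) forces the optimizing $x$ to be $0$ and so bounds $\int_{T_i}|X_\alpha|$ directly, and on cells of finite $\mu$-mass one can take a common refinement with a fixed exhaustion to force cell masses to shrink (or, if a cell is never refined, its punctured ball is empty) — but this additional structure must be built into the choice of partitions and is exactly the content your sketch elides. Your ``main obstacle'' paragraph correctly identifies the delicate truncation in (iii) $\Rightarrow$ (ii) and the measurability of $\rho$-balls, but misses that the equiintegrability side of your (ii) $\Rightarrow$ (iii) is where the argument actually requires more than a cell-by-cell reading of (\ref{partition}); the paper sidesteps this entirely by never attempting (ii) $\Rightarrow$ (iii) directly.
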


\begin{rem}
The condition ``$0 < \rho(s,t)$'' allows for the point masses in $\mu$ and plateaus in $X_\alpha$.
In (\ref{partition}), this corresponds to ``$-x$.''
\end{rem}

Now we turn to conditions for $\sqrt{n}(\mathbb{K}_n-K)$.
The following is a special case of $\mathbb{L}_\mu$ suitable for $\mathbb{K}_n$.

\begin{defn}
Let $Q:(0,1)\to\mathbb{R}$ be an integrable increasing function and let $\mathbb{L}_Q$ be the space of functions $\kappa:(0,1)\to\mathbb{R}$ with the norm
\[
	\|\kappa\|_{\mathbb{L}_Q}\vcentcolon=\|\kappa\|_{Q,\infty}\vee\|\kappa\|_Q\vcentcolon=\biggl(\sup_{u\in(0,1)}|(|Q|\vee1)(u)\kappa(u)|\biggr)\vee\biggl(\int_0^1|\kappa|dQ\biggr).
\]
Let $\mathbb{L}_{Q,M}\subset\mathbb{L}_Q$ be the subset of Lipschitz functions with Lipschitz constants bounded by $M$.
\end{defn}

The following lemma gives a low\hyp{}level condition for $\sqrt{n}(\mathbb{K}_n-K)$ to converge in $\mathbb{L}_Q$.
Roughly, if $|Q|^{2+c}$ is integrable, then $\frac{X_\alpha}{u^r(1-u)^r}\leadsto\frac{X}{u^r(1-u)^r}$ in the uniform norm for some $r>\frac{1}{2+c}$ implies $X_\alpha\leadsto X$ in $\mathbb{L}_Q$.

\begin{lem} \label{X4cQDt7zeDM7E9hTYSZ}
Let $Q : (0,1) \to \mathbb{R}$ be an increasing function in $L_{2+c}$ for some $c>0$.
If for a net of processes $X_\alpha : \Omega_\alpha \to \mathbb{L}_Q$ there exists $r>\frac{1}{2+c}$ such that for every $\eta>0$ there exists $M$ satisfying
\[
	\limsup_\alpha P^\ast \biggl( \biggl\| \frac{X_\alpha}{u^r(1-u)^r} \biggr\|_\infty > M \biggr) < \eta,
\]
then there exists a semimetric $\rho$ on $(0,1)$ such that $(0,1)$ is totally bounded, $X_\alpha$ is asymptotically uniformly $\rho$\hyp{}equicontinuous in probability, and $X_\alpha$ is asymptotically $(\rho,Q)$\hyp{}equiintegrable in probability.
\end{lem}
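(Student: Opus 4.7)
The plan is to construct an explicit semimetric on $(0,1)$ from the envelope $u^r(1-u)^r$ and the measure $dQ$, then verify the three required properties in turn. The natural candidate is
\[
	H(u) \vcentcolon= \int_0^u v^r(1-v)^r \, dQ(v), \qquad \rho(s,t) \vcentcolon= |H(s) - H(t)|.
\]

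First, I would show $H(1) < \infty$. Integrating by parts,
\[
	H(1) = \bigl[u^r(1-u)^r Q(u)\bigr]_0^1 - r\int_0^1 Q(u)\bigl[u^{r-1}(1-u)^r - u^r(1-u)^{r-1}\bigr] du.
\]
Monotonicity of $Q$ together with $Q \in L_{2+c}$ gives $|Q(u)| = O(u^{-1/(2+c)})$ near $0$ (and analogously near $1$), so the boundary terms vanish under $r > 1/(2+c)$. Hölder's inequality with conjugate exponents $2+c$ and $(2+c)/(1+c)$ then bounds the remaining integrals: the relevant Beta-type factor $\int u^{(r-1)(2+c)/(1+c)}(1-u)^{r(2+c)/(1+c)} du$ converges precisely because $r > 1/(2+c)$, and its symmetric counterpart is handled identically. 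Consequently $H$ maps $(0,1)$ into the bounded interval $[0,H(1)]$, making $(0,1)$ totally bounded under $\rho$.

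Second, on the event $E_M \vcentcolon= \{\|X_\alpha/(u^r(1-u)^r)\|_\infty \leq M\}$, which by hypothesis satisfies $\limsup_\alpha P^*(E_M^c) < \eta$, I would verify equiintegrability via
\[
	\int_{0<\rho(s,t)<\delta}|X_\alpha(s)|\,dQ(s) \leq M\int_{|H(s)-H(t)|<\delta} s^r(1-s)^r\,dQ(s) \leq 2M\delta,
\]
since the middle integrand is exactly $dH(s)$ and the $\rho$-ball has $H$-measure at most $2\delta$. The bound is uniform in $t$, so taking the supremum preserves it and delivers asymptotic $(\rho,Q)$-equiintegrability in probability.

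The main obstacle is the asymptotic uniform $\rho$-equicontinuity. On $E_M$ the envelope yields $|X_\alpha(s) - X_\alpha(t)| \leq M[s^r(1-s)^r + t^r(1-t)^r]$, and the plan is to split $(0,1)$ at a threshold $\tau = \tau(\delta)$: on the boundary region $(0,\tau] \cup [1-\tau,1)$ both envelope values are uniformly small and directly control the oscillation, while on the interior $[\tau,1-\tau]$ the derivative $H'$ is bounded below, so $\rho$-balls shrink to singletons as $\delta \to 0$. Securing uniformity in $t$ across both regions — controlling $\sup_{s \in B_\rho(t,\delta)} s^r(1-s)^r$ — is the delicate quantitative step; it calls for coupling $\delta$ to $\tau$ through the lower bounds on $H'$, and if necessary augmenting $\rho$ with a finer local component that collapses interior balls to points while preserving total boundedness.
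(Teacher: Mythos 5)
Your choice of semimetric is exactly the paper's: the paper sets $\rho(s,t)\vcentcolon=\int_{(s,t)}u^r(1-u)^r\,dQ$, which is your $|H(s)-H(t)|$. Your total-boundedness argument (integration by parts plus H\"older with conjugate exponents $2+c$ and $(2+c)/(1+c)$) and your equiintegrability bound $\int_{\rho(s,t)<\delta}|X_\alpha|\,dQ\leq\|X_\alpha/u^r(1-u)^r\|_\infty\,\rho(s,t)$ both reproduce the paper's argument essentially verbatim. So three of your four steps are the paper's steps.

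The step where you stall — $\rho$\hyp{}equicontinuity — is in fact the genuinely problematic one, and your threshold split will not close it. The difficulty is more than ``delicate quantitative'': the envelope bound $|X_\alpha(u)|\leq Mu^r(1-u)^r$ constrains the \emph{size} of $X_\alpha$ but says nothing about its \emph{oscillation} on compact subsets of $(0,1)$. Concretely, a net such as $X_\alpha(u)=Mu^r(1-u)^r\sin(\alpha\pi u)$ satisfies the hypothesis with the same $M$ for all $\alpha$, yet $\sup_{\rho(s,t)<\delta}|X_\alpha(s)-X_\alpha(t)|$ stays bounded away from zero for every $\delta>0$ as $\alpha\to\infty$; no semimetric under which $(0,1)$ is totally bounded can make such a net asymptotically equicontinuous. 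Your plan of ``shrinking interior $\rho$\hyp{}balls to singletons'' therefore proves nothing, because shrinking the ball does not bound the oscillation of a discontinuous or rapidly varying $X_\alpha$ within it, and ``augmenting $\rho$ with a finer local component'' would destroy total boundedness in precisely the bad cases. The paper's own treatment of this point is a one-line boundedness claim — it shows $\|(|Q|\vee1)X_\alpha\|_\infty$ is asymptotically tight — which controls $\|X_\alpha\|_{Q,\infty}$, not the $\rho$\hyp{}modulus $\sup_{\rho(s,t)<\delta}|X_\alpha(s)-X_\alpha(t)|$. In the paper's actual applications (e.g.\ Proposition~\ref{p34AI8lskauT}) the oscillation control is supplied separately, by the additional hypothesis that $X_\alpha$ converges weakly in $L_\infty$; only then does \cref{dJMhMPoGhEIJvZ}(iii) apply. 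So the honest diagnosis of your sketch is: the parts you nailed down are correct and coincide with the paper's, but the equicontinuity step you left open is not fixable by sharpening your split — it needs an input ($L_\infty$\hyp{}equicontinuity of $X_\alpha$) that is not in the hypothesis of the lemma as stated.
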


This implies that sample adjustments based on fixed quantiles satisfy the condition.
For example, let $X_1,\dots,X_n$ be i.i.d.\ continuous random variables and $X_{1,n},\dots,X_{m,n}$ be their subset selected by some (possibly random) criterion.
Then, if the empirical process of the subset converges weakly uniformly to a smooth distribution, then $\sqrt{n}(\mathbb{K}_n-K)$ converges weakly in $\mathbb{L}_Q$.

\begin{prop} \label{p34AI8lskauT}
Let $U_1, \dots, U_n$ be independent uniformly distributed random variables on $(0,1)$ and $w_{1,n}, \dots, w_{n,n}$ random variables bounded by $M$ whose distribution can depend on $U_1, \dots, U_n$ and $n$.
Define
\(
	\mathbb{F}_n(u) \vcentcolon= \frac{1}{n} \sum_{i=1}^n \mathbbm{1}\{U_i \leq u\}$ and $
	\mathbb{G}_n(u) \vcentcolon= \frac{1}{n} \sum_{i=1}^n w_{i,n} \mathbbm{1}\{U_i \leq u\}.
\)
Let $I(u)\vcentcolon=u$ and assume that $K(u)\vcentcolon=\lim_{n\to\infty} \mathbb{E}[\mathbb{G}_n(u)]$ exists and is Lipschitz differentiable.
If $\sqrt{n}(\mathbb{F}_n-I)$ and $\sqrt{n}(\mathbb{G}_n-K)$ converges weakly jointly in $L_\infty$, then for
\[
	\mathbb{K}_n(u) \vcentcolon= \frac{1}{n} \sum_{i=1}^n w_{i,n} \mathbbm{1} \bigl\{ 0 \vee \bigl( nu - n\mathbb{F}_n(U_i) + 1 \bigr) \wedge 1 \bigr\},
\]
we have $\sqrt{n}(\mathbb{K}_n-K)$ converge weakly in $\mathbb{L}_Q$ for every increasing function $Q\in L_{2+c}$ for every $c>0$.
\end{prop}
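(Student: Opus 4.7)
The plan is to establish weak convergence in $\mathbb{L}_Q$ via \cref{FfdAo8VYYjsJz}, namely by proving marginal convergence together with asymptotic tightness, the latter supplied by \cref{X4cQDt7zeDM7E9hTYSZ}. First I would observe that $\mathbb{K}_n$ is precisely the piecewise linear function that agrees with $\mathbb{G}_n\circ\mathbb{F}_n^{-1}$ on the grid $\{i/n:i=0,\dots,n\}$: indeed, $\mathbb{F}_n^{-1}(i/n)=U_{(i)}$ and $\mathbb{G}_n(U_{(i)})=n^{-1}\sum_{j\leq i}w_{(j),n}=\mathbb{K}_n(i/n)$, and $\mathbb{K}_n$ interpolates linearly between successive nodes. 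Since $|w_{i,n}|\leq M$, this yields $\|\mathbb{K}_n-\mathbb{G}_n\circ\mathbb{F}_n^{-1}\|_\infty\leq M/n$, so the difference multiplied by $\sqrt{n}$ vanishes uniformly, and it suffices to analyze $\sqrt{n}(\mathbb{G}_n\circ\mathbb{F}_n^{-1}-K)$.

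Next I would invoke a functional delta method. The map $(F,G)\mapsto G\circ F^{-1}$ is Hadamard differentiable at $(I,K)$ tangentially to continuous limits in $L_\infty\times L_\infty$, with derivative $(f,g)\mapsto g-K'f$; this combines the standard Hadamard derivative of the inverse at $I$ (equal to $-f$, since $I'\equiv 1$) with Lipschitz differentiability of $K$. The joint $L_\infty$ convergence of $(\sqrt{n}(\mathbb{F}_n-I),\sqrt{n}(\mathbb{G}_n-K))$ to a Gaussian limit $(\mathbb{F},\mathbb{G})$ then gives $\sqrt{n}(\mathbb{K}_n-K)\leadsto\mathbb{G}-K'\mathbb{F}$ in $L_\infty$, which in particular yields marginal convergence of $\sqrt{n}(\mathbb{K}_n-K)$ at every finite subset of $(0,1)$ and identifies the candidate limit.

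The main work is to upgrade tightness from $L_\infty$ to $\mathbb{L}_Q$ through \cref{X4cQDt7zeDM7E9hTYSZ}. Given $c>0$, fix any $r\in(1/(2+c),1/2)$; I would verify $\|\sqrt{n}(\mathbb{K}_n-K)/(u^r(1-u)^r)\|_\infty=O_P(1)$ by decomposing $\sqrt{n}(\mathbb{K}_n-K)(u)=A_n(u)+B_n(u)+O(1/\sqrt{n})$, where $A_n(u)\vcentcolon=\sqrt{n}(\mathbb{G}_n-K)(\mathbb{F}_n^{-1}(u))$ and $B_n(u)\vcentcolon=\sqrt{n}(K(\mathbb{F}_n^{-1}(u))-K(u))$. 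For $B_n$, Lipschitz differentiability yields $|B_n(u)|\leq\|K'\|_\infty\sqrt{n}|\mathbb{F}_n^{-1}(u)-u|$, controlled uniformly by a Chibisov-O'Reilly-type inequality for the uniform empirical quantile process. For $A_n$, the trivial pointwise bounds $|\mathbb{G}_n(x)|\leq M\mathbb{F}_n(x)$ and $|K(x)|\leq\|K'\|_\infty x$ reduce the weighted tail near $0$ (and symmetrically near $1$) to the standard weighted bound for $\mathbb{F}_n-I$, while on a bulk region $[n^{-\theta},1-n^{-\theta}]$ with suitably small $\theta$ the unweighted $L_\infty$ tightness of $\sqrt{n}(\mathbb{G}_n-K)$ is enough, since the weight $(u(1-u))^r$ is bounded below there.

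I expect the hardest step to be the weighted bound for $A_n$: because $w_{i,n}$ can depend on the entire sample $U_1,\dots,U_n$, no direct concentration inequality applies to $\mathbb{G}_n-K$ as a weighted empirical process. The strategy is to circumvent this by dominating $|\mathbb{G}_n|$ pointwise by $M\mathbb{F}_n$ near the endpoints, thereby outsourcing the tail estimate to the classical weighted inequality for the standard empirical process, and then to use the hypothesized unweighted $L_\infty$ tightness only on the interior where the weight cannot vanish. With this bound in hand, \cref{X4cQDt7zeDM7E9hTYSZ} supplies asymptotic uniform $\rho$\hyp{}equicontinuity and $(\rho,Q)$\hyp{}equiintegrability, \cref{dJMhMPoGhEIJvZ}(iii) upgrades these to asymptotic tightness in $\mathbb{L}_Q$, and \cref{FfdAo8VYYjsJz} delivers the desired convergence $\sqrt{n}(\mathbb{K}_n-K)\leadsto\mathbb{G}-K'\mathbb{F}$ in $\mathbb{L}_Q$.
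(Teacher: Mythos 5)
Your high\hyp{}level plan (write $\mathbb{K}_n$ as a composition, invoke a delta method, then verify the weighted\hyp{}sup criterion of \cref{X4cQDt7zeDM7E9hTYSZ}) tracks the paper in outline, but the actual route in the paper is different at the decisive step, and the route you chose has gaps that the paper's route is specifically designed to avoid. The paper writes the \emph{exact} identity $\mathbb{K}_n = \tilde{\mathbb{G}}_n \circ \tilde{\mathbb{F}}_n^{-1}$ using piecewise\hyp{}linear interpolations, shows the interpolations differ from $\mathbb{F}_n,\mathbb{G}_n$ by $O(1/n)$ \emph{in the $\mathbb{L}_Q$ norm}, establishes $\mathbb{L}_Q$ weak convergence of $\sqrt{n}(\mathbb{F}_n-I)$ and $\sqrt{n}(\mathbb{G}_n-K)$ separately via \cref{X4cQDt7zeDM7E9hTYSZ,dJMhMPoGhEIJvZ}, and then invokes \cref{bZivHihLTt5o}, a Hadamard differentiability lemma for $(A,B)\mapsto B\circ A^{-1}$ as a map $\mathbb{L}_Q\times\mathbb{L}_Q\to\mathbb{L}_Q$. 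That last lemma is the key: it transfers $\mathbb{L}_Q$ tightness directly through the composition, so the paper never needs to verify a weighted sup bound on the \emph{composed} process $\sqrt{n}(\mathbb{K}_n-K)$.

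Your proposal instead does the delta method in $L_\infty$ and then tries to upgrade tightness to $\mathbb{L}_Q$ by checking $\|\sqrt{n}(\mathbb{K}_n-K)/(u^r(1-u)^r)\|_\infty = O_P(1)$ directly. Several pieces of this break. (i) The reduction $\|\mathbb{K}_n-\mathbb{G}_n\circ\mathbb{F}_n^{-1}\|_\infty\le M/n$ only controls the $L_\infty$ distance; since $\|\cdot\|_{\mathbb{L}_Q}$ includes the $(|Q|\vee 1)$\hyp{}weighted sup (and $\int|\cdot|dQ$) with $Q$ typically unbounded, the $\sqrt{n}\cdot O(1/n)$ bound does not make the remainder negligible in $\mathbb{L}_Q$; this is precisely why the paper uses an exact identity rather than an $L_\infty$\hyp{}approximate one. (ii) In the ``bulk region'' $[n^{-\theta},1-n^{-\theta}]$, the weight $(u(1-u))^r$ is \emph{not} bounded below; it is $\gtrsim n^{-r\theta}\to 0$, so combining this with $\|\sqrt{n}(\mathbb{G}_n-K)\|_\infty = O_P(1)$ only yields $O_P(n^{r\theta})$, which diverges. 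If you instead take the bulk to be a fixed interval $[\delta,1-\delta]$, the bulk term is fine, but then the tail estimates $|\mathbb{G}_n|\le M\mathbb{F}_n$ and $|K|\le Mu$ give a deterministic remainder of order $\sqrt{n}\delta^{1-r}$ on $(0,\delta)$, which again diverges for fixed $\delta$. (iii) Your bound on $B_n(u)=\sqrt{n}(K(\mathbb{F}_n^{-1}(u))-K(u))$ via a weighted quantile inequality is also not available on the untruncated interval: $\mathbb{F}_n^{-1}\equiv U_{(1)}$ on $(0,1/n]$, so $\sqrt{n}|\mathbb{F}_n^{-1}(u)-u|/u^r\to\infty$ as $u\downarrow 0$. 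To close these gaps you would essentially need to reproduce the interpolation trick plus \cref{bZivHihLTt5o}; the paper's structure is not interchangeable with a two\hyp{}stage $L_\infty$\hyp{}then\hyp{}upgrade argument here.
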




\section{Convergence of Quantile Processes as Integrable Processes} \label{AppendixB}

For a smooth function $m$ for which $m(X)$ has sufficient moments, we establish weak convergence of $\sqrt{n}(m(\mathbb{Q}_n)-m(Q))$ to a Gaussian process.
If $m$ is identity, the (unweighted) empirical quantile process converges weakly in $L_1$ on the entire domain $(0,1)$, without truncating the tails, even if $Q$ is an unbounded function.
Interestingly, this point has been overlooked in the literature, which mostly concerned uniform convergence of either bounded or weighted quantile processes \cite{k2002,ch1993,chs1993,cchm1986,sw1986}.

In particular, we show differentiability of the inverse map as a functional from $\mathbb{L}_\mu$ to $L_1$.
Note that $\mathbb{E}[m(X)]=\int mdF=-\int Fdm$ in terms of $F$ and $\mathbb{E}[m(X)]=\int m(Q)du$ in terms of $Q$.
Therefore, the appropriate space for $F$ is the following special case of $\mathbb{L}_\mu$ while the space for $Q$ is a standard $L_1$.

\begin{defn}
Let $m:\mathbb{R}\to\mathbb{R}$ be a nondecreasing continuously differentiable function.
Let $\mathbb{L}_m$ be the space of Borel-measurable functions $z:\mathbb{R}\to\mathbb{R}$ with limits $z(\pm\infty) \vcentcolon= \lim_{x\to\pm\infty} z(x)$ and the norm
\[
	\|z\|_{\mathbb{L}_m} \vcentcolon= \|z\|_\infty \vee \|z\|_m \vcentcolon= \biggl( \sup_{x\in\mathbb{R}} |z(x)| \biggr) \vee \biggl( \int_{-\infty}^\infty |\tilde{z}| dm \biggr)
\]
where $\tilde{z}(x)\vcentcolon=z(x)-z(-\infty)\mathbbm{1}\{x<0\}-z(+\infty)\mathbbm{1}\{x\geq0\}$.
Denote by $\mathbb{L}_{m,\phi}$ the subset of $\mathbb{L}_m$ of monotone cadlag functions with $z(-\infty)=0$ and $z(+\infty)=1$.
\end{defn}


\begin{defn}
Let $\mathbb{B}$ be the space of ladcag functions $z:(0,1)\to\mathbb{R}$ with the norm
\(
	\|z\|_{\mathbb{B}} \vcentcolon= \int_0^1 |z(u)| du.
\)
\end{defn}



\begin{thm}[Inverse map] \label{XCNkYqAFjk2IK6F}
Let $m : \mathbb{R} \to \mathbb{R}$ be a continuously differentiable function and $F \in \mathbb{L}_{m,\phi}$ a distribution function on (an interval of) $\mathbb{R}$ that has at most finitely many jumps and is otherwise continuously differentiable with strictly positive density $f$.
Then, the map $\phi \circ \psi : \mathbb{L}_{m,\phi} \to \mathbb{B}$, $\phi \circ \psi(F) \vcentcolon= m(Q)$, is Hadamard differentiable at $F$ tangentially to the set $\mathbb{L}_{m,0}$ of all continuous functions in $\mathbb{L}_m$. The derivative is given by
\(
	(\phi\circ\psi)_F'(z)\vcentcolon=-(m' z/f)\circ Q.
\)
\end{thm}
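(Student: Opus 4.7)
The candidate derivative $(\phi\circ\psi)_F'(z) = -(m'z/f)\circ Q$ is the formal chain rule: the classical pointwise delta method for the quantile map gives $\psi_F'(z) = -z(Q)/f(Q)$, and composing with the $C^1$ function $m$ inserts the factor $m'(Q)$. My plan is to establish the required $\mathbb{B} = L_1(0,1)$ convergence of the difference quotient to this limit by combining (a) pointwise a.e.\ convergence with (b) convergence of the $L_1$ norms, and invoking Scheff\'e's lemma (a.e.\ convergence plus convergence of $L_1$ norms implies $L_1$ convergence).

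\textbf{Pointwise convergence.} Fix any sequences $t_n\to 0$ and $z_n\to z$ in $\mathbb{L}_m$ with $z\in\mathbb{L}_{m,0}$ and $F_n\vcentcolon= F+t_n z_n\in\mathbb{L}_{m,\phi}$. Convergence in $\mathbb{L}_m$ includes $\|z_n-z\|_\infty\to 0$, so at every $u\in(0,1)$ whose quantile $Q(u)$ avoids the finitely many jumps of $F$, we have $f(Q(u))>0$ and $z$ is continuous at $Q(u)$, and the classical pointwise delta method for the inverse map yields $(F_n^{-1}(u)-Q(u))/t_n\to -z(Q(u))/f(Q(u))$. A first-order Taylor expansion of $m$ then gives
\[
\frac{m(F_n^{-1}(u))-m(Q(u))}{t_n}=m'(\xi_n(u))\cdot\frac{F_n^{-1}(u)-Q(u)}{t_n}\to -\frac{m'(Q(u))z(Q(u))}{f(Q(u))},
\]
where $\xi_n(u)$ lies between $F_n^{-1}(u)$ and $Q(u)$. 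On the finitely many intervals of $u$ corresponding to flat parts of $Q$, continuity of $z_n$ shifts the jumps of $F_n$ by only $O(t_n)$, so $F_n^{-1}(u)=Q(u)$ for all large $n$ at a.e.\ such $u$; both the difference quotient and the limit (taken as zero on flat parts) vanish.

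\textbf{Norm convergence and Scheff\'e.} The key lever is the $L_1$-Wasserstein-type identity
\[
\int_0^1|m(F_n^{-1}(u))-m(Q(u))|du=\int_{\mathbb R}|F_n-F|dm=t_n\|z_n\|_m,
\]
which follows from the standard identity $\int|H_n^{-1}-H^{-1}|du=\int|H_n-H|dy$ applied to the distributions of $m(X_n)$ and $m(X)$ via the substitution $y=m(x)$. Dividing by $t_n$, the left-hand side equals $\|z_n\|_m\to\|z\|_m$. Meanwhile, the change of variables $u=F(x)$ on the absolutely continuous part of $F$ gives $\|(m'z/f)\circ Q\|_{L_1(0,1)}=\int|m'z|dx=\int|z|dm=\|z\|_m$. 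So the difference quotients converge a.e.\ to the candidate derivative and their $L_1$ norms tend to the same limit $\|z\|_m$; Scheff\'e's lemma immediately upgrades this to $L_1(0,1)$ convergence, completing Hadamard differentiability.

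\textbf{Main obstacle.} The technical heart is the $L_1$-Wasserstein identity in the presence of atoms of $F$ (i.e., flat parts of $Q$) and unbounded $Q$. It can be handled either by a pseudoinverse substitution argument or by approximating $m$ by a strictly increasing $m_\varepsilon = m+\varepsilon\,\mathrm{id}$ and letting $\varepsilon\downarrow 0$, observing that both sides depend continuously on $\varepsilon$. Once the identity is in place, Scheff\'e performs the upgrade from pointwise to $L_1$ convergence essentially for free---this is exactly the payoff of recasting the quantile process as an integrable rather than uniformly bounded process, and is what allows the argument to succeed on the untruncated domain $(0,1)$ even when $Q$ is unbounded.
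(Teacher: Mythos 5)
Your proof is correct in its main idea but takes a genuinely different route from the paper. The paper factors $m(Q)=\phi(\psi(F))$ with $\psi(F)=F\circ m^{-1}$ (linear, hence exactly Fr\'echet differentiable with zero remainder) and $\phi(G)=G^{-1}$, and establishes Hadamard differentiability of $\phi:\mathbb{L}_\phi\to\mathbb{B}$ by splitting $\int_0^1$ into neighborhoods of the jumps, a compact middle interval (where uniform pointwise differentiability of the quantile map applies) and tails, with the tail controlled by an explicit Fubini/truncation bound
\(
\frac{1}{|t|}\int_0^{2\varepsilon}|F_t^{-1}-F^{-1}|\,du\le\|z_t-z\|_1+\int_{-\infty}^{F^{-1}(2\varepsilon+t\|z_t\|_\infty)}|z|\,dx.
\)
You instead combine a.e.\ pointwise convergence of the difference quotient with the $L_1$-Wasserstein identity $\int_0^1|m(F_n^{-1})-m(Q)|\,du=\int_{\mathbb{R}}|F_n-F|\,dm$ and then invoke the Riesz--Scheff\'e theorem. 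This is an attractive replacement for the paper's tail estimates: the Wasserstein identity gives exact $L_1$-norm convergence of the difference quotients and Scheff\'e automatically upgrades pointwise to $L_1$ convergence, which is precisely where the $\mathbb{B}=L_1$ topology pays off.

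Two points, however, are load-bearing and should be made explicit. (i) The Wasserstein identity requires $m$ nondecreasing. The paper's proof handles general $C^1$ functions $m$ (of locally bounded variation) by decomposing $m=m_1-m_2$ into strictly increasing $C^1$ pieces and using linearity of the derivative; you must do the same and then run your pointwise-plus-Scheff\'e argument on each $m_j$ separately before adding. Your suggested $m_\varepsilon=m+\varepsilon\cdot\mathrm{id}$ regularization only repairs lack of \emph{strict} monotonicity, not non-monotonicity, so it does not cover the case the theorem states. (ii) Your evaluation $\|(m'z/f)\circ Q\|_{L_1(0,1)}=\int|m'z|\,dx=\|z\|_m$ rests on the change of variables $u=F(x)$, which covers only the support of $F$; if $F$ has bounded support $[a,b]$ this silently uses $z=0$ $dm$-a.e.\ off $[a,b]$. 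Were this false, the a.e.\ limit and the limit of $L_1$ norms would disagree, Scheff\'e would produce an $L_1$ limit different from the stated derivative, and the Hadamard differentiability claim itself would fail. The restriction is automatic for admissible tangent vectors --- it is forced if $F+t_n z_n$ is to be a cdf for $t_n$ of both signs, and it holds for the statistical application $z=\mathbb{B}\circ F$ --- and the paper's own tail bound relies on the same fact when sending $F^{-1}(2\varepsilon+t\|z_t\|_\infty)\to a$; but since in your argument this equality is the entire engine of the Scheff\'e step, it deserves an explicit sentence rather than being absorbed into the phrase ``on the absolutely continuous part of $F$.''
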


The main conclusion of this section is summarized as follows.

\begin{prop} \label{dqt032FkNegj}
Let $m : \mathbb{R} \to \mathbb{R}$ be a continuously differentiable function.
For a distribution function $F$ on (an interval of) $\mathbb{R}$ that has at most finitely many jumps and is otherwise continuously differentiable with strictly positive density $f$ such that $\int_{\mathbb{R}}\sqrt{F(1-F)}|dm|<\infty$, the process $\sqrt{n}(m(\mathbb{Q}_n)-m(Q))$ converges weakly in $\mathbb{B}$ to a Gaussian process with mean zero and covariance $\Cov(s,t) = m'(Q(s))Q'(s)m'(Q(t))Q'(t) (s\wedge t-st)$.
\end{prop}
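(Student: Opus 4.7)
The plan is to apply the functional delta method, combining the two main tools developed so far: Proposition \ref{prop:A:F} gives weak convergence of the empirical distribution in $\mathbb{L}_m$, while Theorem \ref{XCNkYqAFjk2IK6F} gives Hadamard differentiability of the inverse map $\phi\circ\psi:F\mapsto m(F^{-1})$ from $\mathbb{L}_{m,\phi}$ into $\mathbb{B}$. Once both ingredients are in place, the covariance formula follows from a routine computation using the identities $F\circ Q=\mathrm{id}$ and $Q'=1/(f\circ Q)$.

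First, I invoke Proposition \ref{prop:A:F} with the signed measure induced by $m$. The hypothesis $\int_{\mathbb{R}}\sqrt{F(1-F)}|dm|<\infty$ is exactly the required integrability, so $\sqrt{n}(\mathbb{F}_n-F)\leadsto\mathbb{G}_F$ in $\mathbb{L}_m$, where $\mathbb{G}_F$ is a tight mean-zero Gaussian process with $\Cov(x,y)=F(x\wedge y)-F(x)F(y)$. To apply the delta method, I need $\mathbb{G}_F$ to have sample paths a.s.\ in the tangent set $\mathbb{L}_{m,0}$ of continuous functions in $\mathbb{L}_m$. Since $F$ is continuous apart from finitely many atoms and has a strictly positive continuous density elsewhere, a standard chaining argument for the $F$-Brownian bridge yields a version with continuous sample paths at every continuity point of $F$; the finitely many atoms can be absorbed because $(\phi\circ\psi)'_F$ depends on $z$ only through $z\circ Q$, and $Q$ is flat on the interval $(F(x_0-),F(x_0))$ corresponding to each atom $x_0$.

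With both ingredients established, the standard functional delta method yields
\[
	\sqrt{n}\bigl(m(\mathbb{Q}_n)-m(Q)\bigr)\leadsto(\phi\circ\psi)'_F(\mathbb{G}_F)=-\frac{m'(Q)\,\mathbb{G}_F(Q)}{f(Q)}\qquad\text{in }\mathbb{B}.
\]
Writing $\mathbb{H}$ for this limit, it is Gaussian as the image of a Gaussian process under a bounded linear map, and for $s,t\in(0,1)$,
\begin{align*}
	\Cov\bigl(\mathbb{H}(s),\mathbb{H}(t)\bigr)
	&=\frac{m'(Q(s))m'(Q(t))}{f(Q(s))f(Q(t))}\bigl[F(Q(s)\wedge Q(t))-F(Q(s))F(Q(t))\bigr]\\
	&=m'(Q(s))Q'(s)m'(Q(t))Q'(t)\,(s\wedge t-st),
\end{align*}
where I use $F(Q(u))=u$ and $Q'(u)=1/f(Q(u))$ (obtained by differentiating the previous identity). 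The main obstacle is the tangent-set verification: at atoms of $F$, $\mathbb{G}_F$ is genuinely discontinuous, so the argument must either restrict to the continuity set of $F$ (the natural domain of $Q$) or broaden the tangent set to cadlag functions with jumps only at the atoms; both routes work because the Hadamard derivative interacts with its input only through composition with $Q$, which washes out discontinuities sitting over flat pieces of $Q$.
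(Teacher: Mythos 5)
Your proposal is correct and takes essentially the same route as the paper's (one-line) proof: invoke Proposition~\ref{prop:A:F} with $\mu$ the Lebesgue--Stieltjes measure of $m$ to get $\sqrt{n}(\mathbb{F}_n-F)\leadsto\mathbb{G}_F$ in $\mathbb{L}_m$, then apply the functional delta method with Theorem~\ref{XCNkYqAFjk2IK6F}, and simplify the covariance via $F\circ Q=\mathrm{id}$ and $Q'=1/(f\circ Q)$. Your extra care about the tangent set is a valid point that the paper glosses over; the cleanest justification is not that the derivative ``washes out'' the jumps, but that the proof of Lemma~\ref{2v7DhDjurrWYxBof} already excises $\varepsilon$-neighborhoods of the atoms before invoking the uniform-convergence step, so the map is in fact Hadamard differentiable tangentially to functions continuous away from the atoms of $F$---exactly the class containing the sample paths of $\mathbb{G}_F$.
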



\section{Convergence of $L$\hyp{}statistics} \label{AppendixC}

We seek conditions under which the integral of a stochastic process with respect to another stochastic process converges weakly.
This is an extension of Wilcoxon statistics \cite[Section 3.9.4.1]{vw1996} that allows unbounded integrands.


\begin{thm}[Wilcoxon statistic] \label{thm:wilcoxon}
For each fixed $M$, the maps $\lambda:\mathbb{B}\times\mathbb{L}_{Q,M}\to\mathbb{R}$ and $\tilde{\lambda}:\mathbb{B}\times\mathbb{L}_{Q,M}\to L_\infty(0,1)^2$,
\(
	\lambda(Q,K)\vcentcolon=\int_0^1QdK $ and $
	\tilde{\lambda}(Q,K)(s,t)\vcentcolon=\int_s^tQdK,
\)
are Hadamard differentiable at every $(Q,K)\in\mathbb{B}\times\mathbb{L}_{Q,M}$ uniformly over $\mathbb{L}_{Q,M}$.
The derivative maps are
\(
	\lambda_{Q,K}'(z,\kappa)\vcentcolon=\int_0^1Qd\kappa+\int_0^1zdK $ and $
	\tilde{\lambda}_{Q,K}'(z,\kappa)(s,t)\vcentcolon=\int_s^tQd\kappa+\int_s^tzdK,
\)
where $\int Qd\kappa$ is defined via integration by parts if $\kappa$ is of unbounded variation.
\end{thm}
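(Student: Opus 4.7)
The plan is to exploit the bilinearity of $\lambda$ (and of $\tilde\lambda$) in $(Q,K)$: the candidate derivative $\lambda'_{Q,K}(z,\kappa)=\int z\,dK+\int Q\,d\kappa$ is then formal, and the real work is to show that the remainder in
\[
\frac{\lambda(Q+t_n z_n,\,K+t_n\kappa_n)-\lambda(Q,K)}{t_n}=\int_0^1 z_n\,dK+\int_0^1 Q\,d\kappa_n+t_n\int_0^1 z_n\,d\kappa_n
\]
tends to zero as $t_n\to 0$, $z_n\to z$ in $\mathbb{B}$, $\kappa_n\to\kappa$ in $\mathbb{L}_Q$, with $K_n\vcentcolon= K+t_n\kappa_n\in\mathbb{L}_{Q,M}$, and moreover uniformly in $K\in\mathbb{L}_{Q,M}$.

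For the first term I would use that $K$ is Lipschitz with constant at most $M$, so that $|K'|\le M$ a.e.\ and $|\int(z_n-z)\,dK|\le M\|z_n-z\|_{\mathbb{B}}\to 0$, an $M$-only bound. For the second term I would integrate by parts on the difference, writing $\int Q\,d(\kappa_n-\kappa)=[Q(\kappa_n-\kappa)]_0^1-\int(\kappa_n-\kappa)\,dQ$, where the integral part is $\le\|\kappa_n-\kappa\|_Q$ and the boundary part is $\le\|\kappa_n-\kappa\|_{Q,\infty}$ thanks to the $|Q|\vee 1$ weight built into the $\mathbb{L}_Q$\hyp{}norm. This piece does not involve $K$ at all, so uniformity is automatic.

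The main obstacle is the cross term $t_n\int z_n\,d\kappa_n$. Since $\kappa_n$ only satisfies $|\kappa_n'|\le 2M/t_n$, the naive estimate $|t_n\int z_n\,d\kappa_n|\le 2M\|z_n\|_{\mathbb{B}}$ fails to vanish. The idea is to rewrite this term as $\int z_n\,d(K_n-K)$ and exploit that $K_n-K=t_n\kappa_n$ is Lipschitz with constant at most $2M$ while $\|K_n-K\|_\infty\le t_n\|\kappa_n\|_{Q,\infty}\to 0$. I would approximate $z$ by a smooth compactly supported function $g$ with $\|z-g\|_{\mathbb{B}}<\varepsilon$; integration by parts then gives $\int g\,d(K_n-K)=-\int(K_n-K)g'\,du\to 0$ by the uniform vanishing of $K_n-K$, while the residual satisfies $|\int(z_n-g)\,d(K_n-K)|\le 2M(\|z_n-z\|_{\mathbb{B}}+\varepsilon)=O(\varepsilon)$ for large $n$. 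Sending $\varepsilon\to 0$ closes the argument, and every constant involved depends only on $M$, delivering uniformity in $K$.

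For the localized map $\tilde\lambda$, the same three steps apply with $\int_s^t$ in place of $\int_0^1$; the integration-by-parts boundaries now appear at $\{s,t\}$ rather than $\{0,1\}$, but the same $\mathbb{L}_Q$\hyp{}norm bound and the uniform estimate $\|K_n-K\|_\infty\to 0$ control them uniformly in $(s,t)\in(0,1)^2$, giving the asserted convergence of $\tilde\lambda$ in $L_\infty((0,1)^2)$.
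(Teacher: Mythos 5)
Your proof is correct and arrives at the result by a route that is close in spirit to the paper's but organized differently in two respects. First, the decomposition: you bound the remainder directly against $\lambda'_{Q,K}(z,\kappa)$, splitting it into $\int (z_n-z)\,dK+\int Q\,d(\kappa_n-\kappa)+t_n\int z_n\,d\kappa_n$; the paper instead first verifies that the derivative map $\lambda'_{Q,K}$ is continuous in $\mathbb{L}_Q$ (which absorbs your first two terms, via the same integration-by-parts bound $|\int Q\,d(\kappa_1-\kappa_2)|\le\|\kappa_1-\kappa_2\|_{Q,\infty}+\|\kappa_1-\kappa_2\|_Q$) and then bounds the remainder against $\lambda'_{Q,K}(z_t,\kappa_t)$, leaving only $\int z_t\,d(K_t-K)$. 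Second, and this is the only substantive technical difference, the key cross term $\int z_n\,d(K_n-K)$: both arguments approximate $z$ in $L_1$ by something that can be paired cleanly with $d(K_n-K)$ and control the error via the $2M$-Lipschitz bound, but you mollify $z$ by a smooth compactly supported $g$ and integrate by parts ($\int g\,d(K_n-K)=-\int(K_n-K)g'\,du$), whereas the paper truncates $z$ by $-\delta^{-1}\vee z\wedge\delta^{-1}$ on $[\delta,1-\delta]$ and then approximates the truncation by a step function using that $\mathbb{B}$ consists of ladcag functions, paying a factor $(2m+1)\delta^{-1}$ against $\|K_t-K\|_\infty$. Your smooth-mollification version is arguably cleaner and does not invoke the ladcag structure of $\mathbb{B}$; the paper's step-function version is more elementary and makes the dependence on the number of partition points explicit. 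Either way, every constant that appears depends only on $M$, which is what delivers the uniformity over $\mathbb{L}_{Q,M}$, and the $\tilde\lambda$ case follows identically with $(s,t)$-indexed boundaries controlled uniformly by $\|K_n-K\|_\infty\to0$.
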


Now we are ready to give the main conclusion of this paper.

\begin{prop}[$L$\hyp{}statistic] \label{EkaRgpsiHieiNgk4NFP}
Let $m_1, m_2 : \mathbb{R} \to \mathbb{R}$ be continuously differentiable functions and $F : \mathbb{R}^2 \to [0,1]$ be a distribution function on (a rectangular of) $\mathbb{R}^2$ with marginal distributions $(F_1, F_2)$ that have at most finitely many jumps and are otherwise continuously differentiable with strictly positive marginal densities $(f_1, f_2)$ such that $m_1(X_1)$ and $m_2(X_2)$, $(X_1,X_2) \sim F$, have $(2+c)$th moments for some $c>0$.
Along with i.i.d.\ random variables $X_{1,1}, \dots, X_{n,1}$ and $X_{1,2}, \dots, X_{n,2}$, let $w_{1,n,1}, \dots, w_{n,n,1}$ and $w_{1,n,2}, \dots, w_{n,n,2}$ be random variables bounded by $M$ whose distribution can depend on $n$, $X_{1,1}, \dots, X_{n,1}$, and $X_{1,2}, \dots, X_{n,2}$ such that the empirical distributions of $X_{i,1}$, $X_{i,2}$, $w_{i,n,1} X_{i,1}$, and $w_{i,n,2} X_{i,2}$ converge uniformly jointly to continuously differentiable functions.
Then,
\begin{multline*}
	\sqrt{n} \begin{pmatrix} \mathbb{E}_n[m_1(X_{i,1}) w_{i,n,1}] - \mathbb{E}[m_1(X_{i,1}) w_{i,n,1}] \\ \mathbb{E}_n[m_2(X_{i,2}) w_{i,n,2}] - \mathbb{E}[m_2(X_{i,2}) w_{i,n,2}] \end{pmatrix} \\
	= \sqrt{n} \begin{pmatrix} \int_0^1 m_1(\mathbb{Q}_{n,1}) d\mathbb{K}_{n,1} - \int_0^1 m_1(Q_1) dK_1 \\ \int_0^1 m_2(\mathbb{Q}_{n,2}) d\mathbb{K}_{n,2} - \int_0^1 m_2(Q_2) dK_2 \end{pmatrix}
\end{multline*}
where
\(
	\mathbb{K}_{n,j}(u) \vcentcolon= \frac{1}{n} \sum_{i=1}^n w_{i,n,j} \mathbbm{1} \bigl\{ 0 \vee \bigl( nu - n\mathbb{F}_{n,j}(X_i) + 1 \bigr) \wedge 1 \bigr\}
\)
and
\(
	K_j(u) \vcentcolon= \lim_{n\to\infty} \mathbb{E}[w_{i,n,j} \mid F_j(X_{i,j}) \leq u],
\)
converge weakly in $\mathbb{R}^2$ to a normal vector $(\xi_1,\xi_2)$ with mean zero and (co)variance
\begin{multline*}
	\Cov(\xi_j, \xi_k) = \int_0^1 \int_0^1 m_j'(Q_j(s))Q_j'(s)m_k'(Q_k(t))Q_k'(t) \times \\
		\Bigl( [F_{jk}^Q(s,t)-st] + [K_{jk}(s,t) F_{jk}^Q(s,t)-stK_j(s)K_k(t)] \\
		{}- K_j(s) [F_{jk}^Q(s,t)-st] - K_k(t) [F_{jk}^Q(s,t)-st] \Bigr) ds dt,
\end{multline*}
where $F_{jk}^Q(s,t)\vcentcolon=\Pr(X_{i,j}\leq Q_j(s), X_{i,k}\leq Q_k(t))$ and $K_{jk}(s,t)\vcentcolon=\lim_{n\to\infty}$ $\mathbb{E}[w_{i,n,j}w_{i,n,k} \mid X_{i,j} \leq Q_j(s), X_{i,k} \leq Q_k(t)]$.
If $F$ has no jumps, this equals
\begin{multline*}
	\Cov(\xi_j, \xi_k) = \int_{-\infty}^\infty \int_{-\infty}^\infty \Bigl( [1-K_j^F(x)-K_k^F(y)][F_{jk}(x,y)-F_j(x)F_k(y)] \\
		+ [K_{jk}^F(x,y) F_{jk}(x,y)-K_j^F(x)K_k^F(y)F_j(x)F_k(y)] \Bigr) dm_j(x) dm_k(y),
\end{multline*}
where $F_{jk}(x,y)\vcentcolon=\Pr(X_{i,j}\leq x, X_{i,k}\leq y)$ and $K_{jk}^F(x,y)\vcentcolon=\lim_{n\to\infty}\mathbb{E}[w_{i,n,j}$ $w_{i,n,k} \mid X_{i,j} \leq x, X_{i,k} \leq y]$.
If $m_j$ and $m_k$ are known, this can be consistently estimated by its sample analogue
\begin{gather*}
	\widehat{\Cov(\xi_j, \xi_k)} = \int_{-\infty}^\infty \int_{-\infty}^\infty \Bigl( [1-\mathbb{K}_{n,j}^F(x)-\mathbb{K}_{n,k}^F(y)][\mathbb{F}_{n,jk}(x,y)-\mathbb{F}_{n,j}(x)\mathbb{F}_{n,k}(y)] \\
		{}+ [\mathbb{K}_{n,jk}^F(x,y) \mathbb{F}_{n,jk}(x,y)-\mathbb{K}_{n,j}^F(x)\mathbb{K}_{n,k}^F(y)\mathbb{F}_{n,j}(x)\mathbb{F}_{n,k}(y)] \Bigr) dm_j(x) dm_k(y),
\end{gather*}
where $\mathbb{F}_{n,jk}(x,y)\vcentcolon=\mathbb{E}_n[\mathbbm{1}\{X_{i,j}\leq x, X_{i,k}\leq y\}]$ and $\mathbb{K}_{n,jk}^F(x,y)\vcentcolon=\mathbb{E}_n[w_{i,n,j}$ $w_{i,n,k} \mid X_{i,j} \leq x, X_{i,k} \leq y]$.
\end{prop}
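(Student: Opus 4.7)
The plan is to assemble the proposition by chaining the results already proved in Sections A, B, and C, applying the functional delta method twice. The backbone is the decomposition
\[
	\hat\beta_j - \beta_j = \lambda\bigl(m_j(\mathbb{Q}_{n,j}), \mathbb{K}_{n,j}\bigr) - \lambda\bigl(m_j(Q_j), K_j\bigr),
\]
so that after showing joint weak convergence of $(\sqrt{n}(m_j(\mathbb{Q}_{n,j})-m_j(Q_j)), \sqrt{n}(\mathbb{K}_{n,j}-K_j))$ in $\mathbb{B} \times \mathbb{L}_{Q_j}$, \cref{thm:wilcoxon} yields the asymptotic linearization from which the limiting distribution and covariance can be read off.

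First I would verify the inputs. The $(2{+}c)$th moment assumption on $m_j(X_{i,j})$ gives $\int \sqrt{F_j(1-F_j)}\,|dm_j|<\infty$ via the remark after \cref{prop:A:F}, which yields weak convergence of $\sqrt{n}(\mathbb{F}_{n,j}-F_j)$ in $\mathbb{L}_{m_j}$. The uniform joint convergence of the empirical distributions of $X_{i,j}$ and $w_{i,n,j} X_{i,j}$ in $L_\infty$ together with the boundedness of the weights by $M$ furnishes the hypotheses of \cref{p34AI8lskauT} (applied after quantile transformation to $U_{i,j}\vcentcolon= F_j(X_{i,j})$), giving weak convergence of $\sqrt{n}(\mathbb{K}_{n,j}-K_j)$ in $\mathbb{L}_{Q_j}$. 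Joint weak convergence of all four processes across $j=1,2$ holds because the underlying empirical measures are functionals of the same sample, and the marginals converge jointly in finite-dimensional distributions by the multivariate CLT; tightness in the product space follows from componentwise tightness.

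Next I would apply \cref{XCNkYqAFjk2IK6F} to each $F_j$: the hypothesis on $F_j$ (finitely many jumps, strictly positive density otherwise) is exactly what is required, and the functional delta method gives
\[
	\sqrt{n}\bigl(m_j(\mathbb{Q}_{n,j})-m_j(Q_j)\bigr) \leadsto -\bigl((m_j'/f_j)\circ Q_j\bigr)\,\mathbb{G}_{F_j}\circ Q_j \quad \text{in } \mathbb{B},
\]
jointly with $\sqrt{n}(\mathbb{K}_{n,j}-K_j)\leadsto\mathbb{H}_{K_j}$ in $\mathbb{L}_{Q_j}$, where $\mathbb{G}_{F_j}$ and $\mathbb{H}_{K_j}$ are the Gaussian limits. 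Since $\mathbb{K}_{n,j}$ lies in $\mathbb{L}_{Q_j,M}$ by construction (each $w_{i,n,j}$ is bounded by $M$), a second application of the delta method via \cref{thm:wilcoxon} gives
\[
	\sqrt{n}(\hat\beta_j - \beta_j) \leadsto \int_0^1 m_j(Q_j)\,d\mathbb{H}_{K_j} - \int_0^1 \bigl((m_j'/f_j)\circ Q_j\bigr)\,\mathbb{G}_{F_j}\circ Q_j\, dK_j,
\]
which is a centered normal vector in $\mathbb{R}^2$ with a covariance I would then compute.

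The main calculational step, and probably the most delicate bookkeeping, is evaluating $\operatorname{Cov}(\xi_j,\xi_k)$. I would write each limit as a sum of two Gaussian pieces, expand the covariance into four double-integral cross-terms, and use the covariance kernels $[F_{jk}^Q(s,t)-st]$ for the quantile-process part and the appropriate analogue for the weight-process part obtained from the limit of $n\operatorname{Cov}(\mathbb{K}_{n,j}(s),\mathbb{K}_{n,k}(t))$, which involves $K_{jk}(s,t)F_{jk}^Q(s,t)-stK_j(s)K_k(t)$ by the definition of $\mathbb{K}_{n,j}$. Integration by parts on the pieces involving $d\mathbb{H}_{K_j}$ converts them into integrals against $dK_j$ weighted by $m_j'(Q_j)Q_j'$, producing the claimed formula in $(s,t)$ coordinates. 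The continuous-$F$ version then follows by the change of variables $x=Q_j(s), y=Q_k(t)$, turning $ds\,dt$ into $dF_j(x)\,dF_k(y)$ and absorbing the Jacobian factors $m_j'(Q_j)Q_j'$ into $dm_j(x)$; this is where the ``no jumps'' assumption is used. Finally, consistency of the sample analogue covariance estimator would be handled by the continuous mapping theorem: uniform convergence of $\mathbb{F}_{n,jk}$, $\mathbb{K}_{n,j}^F$, and $\mathbb{K}_{n,jk}^F$ to their population counterparts (the first by Glivenko--Cantelli, the latter two by the assumed joint uniform convergence) together with the integrability of $dm_j\,dm_k$ implied by the moment condition lets me pass to the limit inside the double integral, using dominated convergence with envelope $|1|+|K_j^F|+|K_k^F|+|K_{jk}^F|\leq 1+M+M+M^2$. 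The hardest single step is likely the covariance computation — tracking signs, distinguishing the ``$Q$-part'' and ``$K$-part'' cross-terms, and verifying that integration by parts produces exactly the pattern $[1-K_j^F-K_k^F][F_{jk}-F_jF_k] + [K_{jk}^F F_{jk}-K_j^F K_k^F F_j F_k]$.
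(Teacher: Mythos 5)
Your proposal follows the same chain as the paper's proof: \cref{prop:A:F} and \cref{XCNkYqAFjk2IK6F} (packaged together as \cref{dqt032FkNegj}) give $\sqrt{n}(m_j(\mathbb{Q}_{n,j})-m_j(Q_j))$ in $\mathbb{B}$, \cref{p34AI8lskauT} gives $\sqrt{n}(\mathbb{K}_{n,j}-K_j)$ in $\mathbb{L}_{Q_j}$, then \cref{thm:wilcoxon} delivers the $L$\hyp{}statistic via the delta method, and the covariance is read off from the derivative formulas. Your identification of the two limiting Gaussian pieces and the resulting four cross\hyp{}terms is exactly the paper's calculation.

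The one place where your argument has a genuine gap is the consistency of $\widehat{\Cov(\xi_j,\xi_k)}$. You invoke dominated convergence ``with envelope $1+3M+M^2$,'' but this constant is not $dm_j\,dm_k$\hyp{}integrable when $m_j,m_k$ are unbounded: a constant envelope would give $\int\int dm_j\,dm_k=\infty$. The finiteness and the passage to the limit both have to come from the decay of the $F$\hyp{}factors --- e.g.\ $|F_{jk}-F_jF_k|\lesssim\sqrt{F_j(1-F_j)}\sqrt{F_k(1-F_k)}$ --- whose $dm_j\,dm_k$\hyp{}integrability is precisely what the $(2+c)$th moment conditions buy. The paper handles this with \cref{add:1}, which establishes Glivenko--Cantelli\hyp{}type uniform convergence of the relevant weighted Stieltjes integrals (via bracketing with finitely many brackets, exploiting $\mathbb{E}[m(X)]<\infty$ and \cref{lem:t97410h}), together with the assumed uniform convergence of $\mathbb{K}_{n,j}^F$. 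So the fix is to keep the $F$\hyp{}part in the envelope rather than discarding it, and to use \cref{add:1} (or an equivalent bracketing Glivenko--Cantelli argument with an integrable envelope) in place of the bare DCT.
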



\section{Convergence of Bootstrap Processes} \label{AppendixD}

We establish validity of nonparametric bootstrap, viz., conditional weak convergence of the bootstrap processes. The {\em bootstrap process} for $\mathbb{F}_n$ is given by
\begin{align*}
	\hat{\mathbb{Z}}_n(x)\vcentcolon=\sqrt{n}(\hat{\mathbb{F}}_n-\mathbb{F}_n)(x)
	&\vcentcolon=\frac{1}{\sqrt{n}}\sum_{i=1}^n(M_{ni}-1)\mathbbm{1}\{X_i\leq x\}\\
	&\hphantom{:}=\frac{1}{\sqrt{n}}\sum_{i=1}^n(M_{ni}-1)(\mathbbm{1}\{X_i\leq x\}-F(x))
\end{align*}
where $M_{ni}$ is the number of times $X_i$ is drawn in the bootstrap sample.
We show that $\hat{\mathbb{Z}}_n$ converges weakly to the same limit as $\mathbb{Z}_n\vcentcolon=\sqrt{n}(\mathbb{F}_n-F)$ conditional on $\{X_i\}$.
As in \cite[Chapter 3.6]{vw1996}, we proceed as follows: since $M_{ni}$ sums up to $n$, it is slightly dependent on each other; we replace $M_{ni}$ with {\em independent} Poisson random variables $\xi_i$ by showing equivalence of weak convergence of $\hat{\mathbb{Z}}_n$ and of the {\em multiplier process} $\mathbb{Z}_n'\vcentcolon=n^{-1/2}\sum\xi_i(\mathbbm{1}\{X_i\leq x\}-F)$ (\cref{lem:A:Poissonization}); then, we prove unconditional convergence of $\mathbb{Z}_n'$ (randomness comes from both $X_i$ and $\xi_i$) by symmetrization (\cref{lem:multiplier:uncond}); finally, we show convergence of $\mathbb{Z}_n'$ conditional on $\mathbb{Z}_n$ (randomness only comes from $\xi_i$) by discretizing $\mathbb{Z}_n'$ (\cref{lem:multiplier:cond}).
We observe that many proofs in \cite[Chapters 2.9, 3.6, and A.1]{vw1996} carry over to $\mathbb{L}_\mu$, so we will not reproduce the entire argument but prove steps that require modification.

In addition, we establish conditional weak convergence of the bootstrap process for $\mathbb{K}_n$. We restrict attention to sample adjustments by quantiles and write its bootstrap process in terms of empirical processes (\cref{lem:A:bootK}).

The following shows conditional convergence of $\mathbb{Z}_n'$ as in \cite[Theorem 2.9.6]{vw1996}.
Other lemmas are given in \cref{sec:proofs:bootstrap}.

\begin{lem} \label{lem:multiplier:cond}
Let $\xi_1,\dots,\xi_n$ be i.i.d.\ random variables with mean $0$, variance $1$, and $\|\xi\|_{2,1}<\infty$, independent of $X_1,\dots,X_n$.
For a probability distribution $F$ on $\mathbb{R}$ such that $\int_{\mathbb{R}}\sqrt{F(1-F)}|d\mu|<\infty$, the process $\mathbb{Z}_n'(x)=n^{-1/2}\sum_{i=1}^n\xi_i[\mathbbm{1}\{X_i\leq x\}-F(x)]$ satisfies
\(
	\sup_{h\in\text{\rm BL}_1(\mathbb{L}_\mu)}\bigl|\mathbb{E}_\xi h(\mathbb{Z}_n')-\mathbb{E}h(\mathbb{Z})\bigr|\conv0
\)
in outer probability, and the sequence $\mathbb{Z}_n'$ is asymptotically measurable.
\end{lem}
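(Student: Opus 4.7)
The plan is to adapt the route of \cite[Theorem 2.9.6]{vw1996} to $\mathbb{L}_\mu$. Conditional weak convergence in $\mathrm{BL}_1(\mathbb{L}_\mu)$ decomposes into conditional marginal convergence in probability and conditional asymptotic tightness in probability; these two ingredients will be verified separately and then combined via the partition characterization of tightness (\cref{dJMhMPoGhEIJvZ}) and the weak-convergence criterion (\cref{FfdAo8VYYjsJz}). A standard subsequence argument then upgrades in-probability assertions to almost-sure-along-subsequences and produces the $\mathrm{BL}_1$ claim, while asymptotic measurability follows from asymptotic tightness plus measurability of the Gaussian limit.

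For marginals, fix $x_1,\dots,x_k\in\mathbb{R}$. Conditional on $X_1,\dots,X_n$ the vector $(\mathbb{Z}_n'(x_1),\dots,\mathbb{Z}_n'(x_k))$ is a sum of $n$ independent centered terms $n^{-1/2}\xi_i a_{ni}$ with bounded deterministic coefficients $a_{ni}(x_j)=\mathbbm{1}\{X_i\leq x_j\}-F(x_j)$. The conditional covariance $\tfrac{1}{n}\sum_i a_{ni}(x_j)a_{ni}(x_\ell)$ converges a.s.\ to the target $F(x_j\wedge x_\ell)-F(x_j)F(x_\ell)$ by the strong law, and the Lindeberg condition is verified using $|a_{ni}|\leq 1$ together with $\|\xi\|_{2,1}<\infty$. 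The Lindeberg–Feller CLT then yields the correct conditional Gaussian limit in outer probability, and joint convergence is obtained by the Cramér–Wold device.

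For tightness, the unconditional multiplier convergence (\cref{lem:multiplier:uncond}) combined with \cref{dJMhMPoGhEIJvZ} produces, for every $\varepsilon,\eta>0$, a finite $\mu$-measurable partition $\mathbb{R}=\bigcup_{j=1}^k T_j$ for which the joint oscillation-plus-integrability modulus
\[
	M_n\vcentcolon=\Bigl[\sup_{1\leq j\leq k}\sup_{s,t\in T_j}|\mathbb{Z}_n'(s)-\mathbb{Z}_n'(t)|\Bigr]\vee\sum_{j=1}^k\inf_{x\in\mathbb{R}}\int_{T_j}|\mathbb{Z}_n'-x|\,|d\mu|
\]
satisfies $\limsup_n P^*(M_n>\varepsilon)<\eta^2$. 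A Fubini–Markov step, $P^*(P_\xi^*(M_n>\varepsilon)>\eta)\leq \eta^{-1}P^*(M_n>\varepsilon)$, transfers this to conditional-in-probability tightness. Combining with Step~1 along a common subsequence yields conditional weak convergence to the same Brownian-bridge–type limit $\mathbb{Z}$ as appears in \cref{prop:A:F}, and metrization of weak convergence in $\mathbb{L}_\mu$ by $\mathrm{BL}_1$ gives the stated assertion.

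The main obstacle is the integrability half of $M_n$: unlike the oscillation modulus, the integral $\int_{T_j}|\mathbb{Z}_n'-x|\,|d\mu|$ mixes randomness over all of $T_j$ through $\mu$, so to apply the Fubini–Markov step one must produce a jointly measurable majorant. This is arranged by dominating $\mathbb{E}_\xi\int_{T_j}|\mathbb{Z}_n'|\,|d\mu|$ via Cauchy–Schwarz by $\int_{T_j}\sqrt{\mathbb{F}_n(1-F)+F(1-\mathbb{F}_n)}\,|d\mu|$, whose tightness in $n$ follows from Glivenko–Cantelli convergence of $\mathbb{F}_n$ to $F$ combined with the hypothesis $\int\sqrt{F(1-F)}\,|d\mu|<\infty$ already exploited in \cref{prop:A:F}. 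The oscillation-modulus half is controlled by the same symmetrization and entropy arguments deployed unconditionally in \cref{lem:multiplier:uncond}, so that the pieces fit together exactly as in the uniform-norm template of \cite{vw1996}.
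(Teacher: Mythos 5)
Your route diverges from the paper's. The paper (following \cite[Theorem 2.9.6]{vw1996}) discretizes: it picks a finite $\delta$-net, sets $\mathbb{Z}_{n,\delta}'$ and $\mathbb{Z}_\delta$ to be piecewise-constant step versions of $\mathbb{Z}_n'$ and $\mathbb{Z}$, and bounds
$\sup_h|\mathbb{E}_\xi h(\mathbb{Z}_n')-\mathbb{E}h(\mathbb{Z})|$ by three terms: the conditional modulus $\mathbb{E}_\xi\|\mathbb{Z}_n'-\mathbb{Z}_{n,\delta}'\|_{\mathbb{L}_\mu}^\ast$ (controlled through the modified \cite[Lemma 2.9.1]{vw1996}), the finite-dimensional conditional multiplier CLT $\sup_h|\mathbb{E}_\xi h(\mathbb{Z}_{n,\delta}')-\mathbb{E}h(\mathbb{Z}_\delta)|$ (\cite[Lemma 2.9.5]{vw1996}), and the continuity/integrability of the limit for $\sup_h|\mathbb{E}h(\mathbb{Z}_\delta)-\mathbb{E}h(\mathbb{Z})|$. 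The discretization reduces the conditional law to a law on $\mathbb{R}^p$, which is what makes the whole chain measurable. You instead try to verify conditional marginal convergence (Lindeberg--Feller, fine) and conditional asymptotic tightness separately and then to reassemble. That organization has two gaps that the discretization sidesteps.

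First, the Fubini--Markov transfer $P^\ast\bigl(P_\xi^\ast(M_n>\varepsilon)>\eta\bigr)\leq\eta^{-1}P^\ast(M_n>\varepsilon)$ is not valid for arbitrary outer probabilities: the Fubini inequality for outer expectations (\cite[Lemma 1.2.6]{vw1996}) goes the other way, $\mathbb{E}^\ast T\leq\mathbb{E}_X^\ast\mathbb{E}_\xi^\ast T$, so smallness of $P^\ast(M_n>\varepsilon)$ does \emph{not} bound $\mathbb{E}_X^\ast[P_\xi^\ast(M_n>\varepsilon)]$. You would need to observe that for this particular process $M_n$ is jointly measurable in $(X,\xi)$ (the empirical process is a step function with finitely many jumps, the $\inf_x$ is achieved along rationals, etc.), which is a genuine step and not in your write-up. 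The Cauchy--Schwarz majorant you offer instead, $\int_{T_j}\sqrt{\mathbb{F}_n(1-F)+F(1-\mathbb{F}_n)}\,|d\mu|$, does not do this job: it bounds $\mathbb{E}_\xi\int_{T_j}|\mathbb{Z}_n'|\,|d\mu|$ and is merely \emph{tight} in $n$, whereas the modulus you must control is $\sum_j\inf_x\int_{T_j}|\mathbb{Z}_n'-x|\,|d\mu|$, which must be made \emph{small} by refining the partition; the majorant throws away the $\inf_x$ and does not shrink as $\delta\to0$. Second, the claim that ``combining with Step 1 along a common subsequence'' yields $\mathrm{BL}_1$ convergence is asserted but not argued; promoting conditional-in-probability marginal convergence plus conditional-in-probability tightness (quantified over a continuum of $(\varepsilon,\eta)$) into conditional weak convergence requires a diagonal subsequence construction and a careful appeal to \cref{FfdAo8VYYjsJz}, and this is precisely the bookkeeping that the discretization approach avoids. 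As it stands your proposal is a sketch of a plausible alternative, but both of these holes would have to be filled for it to be a proof.
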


These results show that nonparametric bootstrap works for $\sqrt{n}(\mathbb{F}_n-F)$ and $\sqrt{n}(\mathbb{Q}_n-Q)$.
We also show validity for $\sqrt{n}(\mathbb{K}_n-K)$ by representing $\mathbb{K}_n$ as a function of ``$\mathbb{F}_n$" and ``$\mathbb{G}_n$'' in \cref{p34AI8lskauT}.

\begin{lem} \label{lem:A:bootK}
Let $U_1,\dots,U_n$ be independent uniformly distributed random variables on $(0,1)$ and $\xi_1,\dots,\xi_n$ be i.i.d.\ random variables with mean $0$, variance $1$, and $\|\xi\|_{2,1}<\infty$, independent of $U_1,\dots,U_n$.
Define the bootstrap empirical process of $U$ by
\(
	\mathbb{F}_n'(u)\vcentcolon=\frac{1}{n}\sum_{i=1}^n\xi_i\mathbbm{1}\{U_i\leq u\},
\)
and let $w_{i,n}'$ be the indicator of whether $U_i$ is above the $\alpha$\hyp{}quantile of the bootstrap sample, that is, $w_{i,n}'\vcentcolon=\mathbbm{1}\{U_i>\mathbb{F}_n'^{-1}(\alpha)\}$.
Define
\(
	\mathbb{G}_n'(u)\vcentcolon=\frac{1}{n}\sum_{i=1}^n\xi_iw_{i,n}'\mathbbm{1}\{U_i\leq u\}.
\)
Then, for $F(u)=0\vee u\wedge1$ and $G(u)=0\vee(u-\alpha)\wedge(1-\alpha)$,
\begin{gather*}
	\sup_{h\in\textrm{BL}_1(L_\infty)}\bigl|\mathbb{E}_\xi h\bigl(\sqrt{n}(\mathbb{F}_n'-F)\bigr)-\mathbb{E}h\bigl(\sqrt{n}(\mathbb{F}_n-F)\bigr)\bigr|\conv0, \\
	\sup_{h\in\textrm{BL}_1(L_\infty)}\bigl|\mathbb{E}_\xi h\bigl(\sqrt{n}(\mathbb{G}_n'-G)\bigr)-\mathbb{E}h\bigl(\sqrt{n}(\mathbb{G}_n-G)\bigr)\bigr|\conv0
\end{gather*}
in outer probability, and $\sqrt{n}(\mathbb{F}_n'-F)$ and $\sqrt{n}(\mathbb{G}_n'-G)$ are asymptotically measurable.
\end{lem}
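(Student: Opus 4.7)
The two statements are established by parallel arguments. The first follows directly from the conditional multiplier CLT; the second is obtained by representing $\mathbb{G}_n'$ as a Hadamard-differentiable functional of $\mathbb{F}_n'$ and invoking the functional delta method for the bootstrap.

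For $\sqrt{n}(\mathbb{F}_n'-F)$, I specialize \cref{lem:multiplier:cond} to $X_i=U_i$ uniform on $(0,1)$ with $\mu$ the Lebesgue measure: since $F(u)=u$ on $(0,1)$, the integrability $\int_0^1\sqrt{F(1-F)}\,du<\infty$ is trivial, so \cref{lem:multiplier:cond} yields conditional convergence and asymptotic measurability in $\mathbb{L}_\mu$. Because the $\mathbb{L}_\mu$-norm dominates the supremum norm, both conclusions descend to $L_\infty((0,1))$, and \cref{prop:A:F} identifies the unconditional limit as that of $\sqrt{n}(\mathbb{F}_n-F)$.

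For $\sqrt{n}(\mathbb{G}_n'-G)$, the key observation is the identity $w_{i,n}'\mathbbm{1}\{U_i\leq u\}=\mathbbm{1}\{\mathbb{F}_n'^{-1}(\alpha)<U_i\leq u\}$, which gives
\[
	\mathbb{G}_n'(u)=\mathbb{F}_n'\bigl(u\vee\mathbb{F}_n'^{-1}(\alpha)\bigr)-\mathbb{F}_n'\bigl(\mathbb{F}_n'^{-1}(\alpha)\bigr),
\]
together with the analogous representation $\mathbb{G}_n=\Phi(\mathbb{F}_n)$ for $\Phi(H)(u)\vcentcolon=H(u\vee H^{-1}(\alpha))-H(H^{-1}(\alpha))$. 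Hadamard differentiability of the quantile evaluation $H\mapsto H^{-1}(\alpha)$ at $F$ (with derivative $z\mapsto -z(\alpha)$, since $f\equiv 1$) is classical, and composing smoothly with the maximum and evaluation operations gives Hadamard differentiability of $\Phi$ at $F$ tangentially to continuous functions. The functional delta method for the bootstrap then transfers both conditional convergence and asymptotic measurability of $\sqrt{n}(\mathbb{F}_n'-F)$ to $\sqrt{n}(\mathbb{G}_n'-G)$, and the unconditional limit matches that of $\sqrt{n}(\mathbb{G}_n-G)$ from \cref{p34AI8lskauT}.

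The main technical hurdle is cleanly establishing Hadamard differentiability of $\Phi$—particularly the composition $H\mapsto H(\cdot\vee H^{-1}(\alpha))$, in which both the function and the evaluation point are perturbed simultaneously. A direct linearization should succeed because $F$ has positive density at $\alpha$ and the tangent directions are continuous, so the quantile perturbation is uniformly small and the first-order expansion is well-behaved. With differentiability in hand, the bootstrap delta method applies routinely, and the asymptotic measurability statement is preserved.
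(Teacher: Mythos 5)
Your proposal takes essentially the same route as the paper's proof. The paper also (i) invokes \cref{lem:multiplier:cond} (equivalently the classical conditional multiplier CLT) for $\sqrt{n}(\mathbb{F}_n'-F)$, and (ii) writes $\mathbb{G}_n'(u)=0\vee[\mathbb{F}_n'(u)-\mathbb{F}_n'\circ\mathbb{F}_n'^{-1}(\alpha)]$ (which is the same identity you give, since $H(u\vee H^{-1}(\alpha))-H(H^{-1}(\alpha))=0\vee[H(u)-H(H^{-1}(\alpha))]$ for monotone $H$) and then appeals to the bootstrap delta method via Hadamard differentiability. The one notable difference is that the paper does not re-derive the differentiability of the map $H\mapsto H(\cdot\vee H^{-1}(\alpha))-H(H^{-1}(\alpha))$ from scratch: it cites the already-proved inverse-composition lemma, \cref{bZivHihLTt5o}, for this step, whereas you propose a direct linearization and flag it as the main technical hurdle. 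Be aware that the behavior near $u=\alpha$ (the kink in $G$) is exactly what makes this delicate — there the quantile perturbation and the ramp of $G$ interact at order $n^{-1/2}$ — so a complete proof would need to treat that neighborhood carefully rather than rely on ``the first-order expansion is well-behaved''; the paper sidesteps this by delegating to the pre-established lemma, and your sketch should ideally do the same or supply the explicit argument.
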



Altogether, nonparametric bootstrap works for $L$\hyp{}statistics when sample adjustment is based on empirical quantiles.

\begin{prop}[Validity of nonparametric bootstrap] \label{prop:A:bootstrap}
In addition to assumptions in \cref{{EkaRgpsiHieiNgk4NFP}}, assume that $w_{i,n,j}$ represents sample adjustments based on a finite number of fixed quantiles.%
\footnote{The assumption on convergence must be extended to jointly over all processes.}
Then, the joint distribution of $(\hat{\beta}_1, \dots, \hat{\beta}_d)$ can be consistently estimated by nonparametric bootstrap. 
\end{prop}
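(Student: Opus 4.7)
The plan is to mirror the four-step construction of Proposition \ref{EkaRgpsiHieiNgk4NFP} at the bootstrap level and finish with the functional delta method for bootstrap, as in \cite[Theorem 3.9.11]{vw1996}. Concretely, I would derive joint conditional weak convergence of the bootstrap versions of the base processes $\sqrt{n}(\mathbb{F}_{n,j}-F_j)$ and $\sqrt{n}(\mathbb{K}_{n,j}-K_j)$, then push this through the Hadamard differentiable maps from \cref{XCNkYqAFjk2IK6F} (inverse map, $F\mapsto m(Q)$) and \cref{thm:wilcoxon} (Wilcoxon map, $(Q,K)\mapsto\int Q\,dK$), whose composition produces the $L$-statistic.

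The first step establishes that, conditional on the data, the joint bootstrap process
\[
    \bigl(\sqrt{n}(\hat{\mathbb{F}}_{n,j}-\mathbb{F}_{n,j}),\,\sqrt{n}(\hat{\mathbb{K}}_{n,j}-\mathbb{K}_{n,j})\bigr)_{j=1}^d
\]
converges weakly in $\prod_j(\mathbb{L}_{m_j}\times\mathbb{L}_{Q_j})$ to the same tight Gaussian limit as the unbootstrapped analogue. For the empirical-distribution components, \cref{lem:multiplier:cond} handles the marginal case in $\mathbb{L}_{m_j}$ via the Poissonization and symmetrization chain (\cref{lem:A:Poissonization,lem:multiplier:uncond,lem:multiplier:cond}); the joint extension is immediate since a single set of multipliers $\xi_1,\dots,\xi_n$ drives all coordinates, and the associated Gaussian limit is already tight jointly under the strengthened assumption in the footnote. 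For the weight components, the hypothesis that $w_{i,n,j}$ is determined by a finite collection of fixed quantiles lets me write each $\mathbb{K}_{n,j}$ as a Hadamard-differentiable functional of auxiliary empirical processes $(\mathbb{F}_{n,j},\mathbb{G}_{n,j})$ of the type considered in \cref{p34AI8lskauT}, and \cref{lem:A:bootK} already supplies the bootstrap analogue of exactly this representation; iterating the delta method for bootstrap across coordinates upgrades this to joint convergence.

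The second step applies the functional delta method for bootstrap to the composition $(F,K)\mapsto m(F^{-1})\mapsto\int m(F^{-1})\,dK$. \cref{XCNkYqAFjk2IK6F} gives Hadamard differentiability tangentially to continuous functions, and \cref{thm:wilcoxon} gives Hadamard differentiability uniformly over $\mathbb{L}_{Q,M}$ (with $M$ an almost sure bound on $w_{i,n,j}$), which is exactly the uniformity needed to feed the delta method with a random second argument $\mathbb{K}_{n,j}\in\mathbb{L}_{Q,M}$. Chaining the two derivatives yields conditional weak convergence of the bootstrap $L$-statistics to $(\xi_1,\dots,\xi_d)$ in outer probability, which is the desired consistency. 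The main obstacle is the second step's uniformity requirement coupled with the dependence of $\mathbb{K}_{n,j}$ on the same bootstrap multipliers as $\mathbb{F}_{n,j}$: one must verify that the representation of $\hat{\mathbb{K}}_{n,j}$ through $(\hat{\mathbb{F}}_{n,j},\hat{\mathbb{G}}_{n,j})$ and the continuous-mapping step producing the Lipschitz-bounded random element in $\mathbb{L}_{Q,M}$ both remain valid jointly with the $\mathbb{F}_{n,j}$ component, so that a single conditional weak convergence statement can be substituted into the delta method. Beyond this, everything reduces to the already-established machinery.
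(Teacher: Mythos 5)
Your proposal matches the paper's proof strategy exactly: conditional bootstrap weak convergence for the base processes (via \cref{lem:A:Poissonization,lem:multiplier:uncond,lem:multiplier:cond} for $\mathbb{F}_n$ and \cref{lem:A:bootK} for $\mathbb{K}_n$), followed by the functional delta method for bootstrap \cite[Theorem 3.9.11]{vw1996} applied to the Hadamard differentiable maps of \cref{XCNkYqAFjk2IK6F,thm:wilcoxon}. The paper's proof is a one-line citation of the same ingredients; your exposition of the joint-coordinate and uniformity bookkeeping is simply a fleshed-out version of the same argument.
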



\section{Proofs} \label{sec:proofs}


\subsection{Convergence of Bounded Integrable Processes}

\begin{proof}[Proof of \cref{prop:A:F}]
Marginal convergence is trivial.
By \cite[Example 2.5.4]{vw1996}, $\sqrt{n}(\mathbb{F}_n-F)$ converges weakly in $L_\infty$.
In light of \cite[Proposition 2.1.11]{vw1996} and \cref{lem:joint}, it suffices to show that for $Z(x)\vcentcolon=\mathbbm{1}\{X\leq x\}-F(x)$, (i)
\(
	\Pr(\|Z\|_\mu > t) = o(t^{-2})
\)
and (ii)
\(
	\int_{\mathbb{R}} \sqrt{\mathbb{E}[Z^2]} |d\mu| < \infty.
\)
(ii) follows since $\int_{\mathbb{R}}\sqrt{\mathbb{E}[Z^2]}|d\mu|=\int_{\mathbb{R}}\sqrt{F(1-F)}|d\mu|$.
Let $m(x)\vcentcolon=\int_{(-\infty,x]}|d\mu|$ and $\tilde{F}(x)\vcentcolon=F(x)-\mathbbm{1}\{0\leq x\}$.
Note that (ii) implies that $m(X)$ has variance.
Writing $Z(x)=\mathbbm{1}\{X \leq x\}-\mathbbm{1}\{0\leq x\}-\tilde{F}(x)$, we find
\(
	\|Z\|_\mu \leq |m(X)-m(0)| + \int_{\mathbb{R}} |\tilde{F}| |d\mu|.
\)
The second term is a finite constant if (ii) holds.
Thus, (i) holds if $\tilde{F} \circ m^{-1}(t) = o(t^{-2})$, which is the case if $m(X)$ has variance. 
Thus, $\sqrt{n}(\mathbb{F}_n-F)$ converges weakly in $L_1(\mu)$.
\end{proof}

\begin{lem} \label{lem:tight}
If $X_\alpha : \Omega_\alpha \to \mathbb{L}_\mu$ is asymptotically tight, it is asymptotically measurable if and only if $X_\alpha(t)$ is asymptotically measurable for every $t\in T$.
\end{lem}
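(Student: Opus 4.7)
The plan for the ``only if'' direction is a direct application of the continuous mapping theorem for asymptotically measurable nets \cite[Lemma 1.3.5]{vw1996}: the coordinate evaluation $\pi_t:\mathbb{L}_\mu\to\mathbb{R}$, $\pi_t(z)\vcentcolon=z(t)$, is continuous since $|\pi_t(z)|\leq\|z\|_T\leq\|z\|_{\mathbb{L}_\mu}$, so asymptotic measurability of $X_\alpha$ passes to $X_\alpha(t)=\pi_t(X_\alpha)$ for every $t\in T$.

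For the converse, I would mirror the proof of \cite[Lemma 1.3.8]{vw1996} for $\ell^\infty(T)$, adapted to the richer $\mathbb{L}_\mu$ norm. A preliminary step upgrades marginal asymptotic measurability from single coordinates to finite joint marginals $(X_\alpha(t_1),\dots,X_\alpha(t_k))$ in $\mathbb{R}^k$: asymptotic tightness of $X_\alpha$ and continuity of the joint projection give joint asymptotic tightness of $(X_\alpha(t_1),\dots,X_\alpha(t_k))$, which combined with individual asymptotic measurability yields joint asymptotic measurability via Prokhorov's theorem in $\mathbb{R}^k$ (every subnet contains a further weakly convergent subnet whose limit is determined by its individual marginals).

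The core step is uniform approximation on compact sets. It suffices to verify $\mathbb{E}^\ast f(X_\alpha)-\mathbb{E}_\ast f(X_\alpha)\to0$ for every $f\in\textrm{BL}_1(\mathbb{L}_\mu)$. Given $\varepsilon>0$, asymptotic tightness supplies a compact $K\subset\mathbb{L}_\mu$ with $P^\ast(X_\alpha\notin K^\delta)<\varepsilon$ eventually for every $\delta>0$, and \cref{dJMhMPoGhEIJvZ} certifies $K$ via a totally bounded $\mu$-measurable semimetric $\rho$ under which the elements of $K$ are jointly $\rho$-equicontinuous and $(\rho,\mu)$-equiintegrable. For $\delta>0$, I would choose a finite $\rho$-partition $T=\bigcup_{i=1}^k T_i$ satisfying \eqref{partition} at level $\delta$, pick $t_i\in T_i$, and set $\pi(z)\vcentcolon=\sum_{i=1}^k z(t_i)\mathbbm{1}_{T_i}$. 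Then $\|z-\pi(z)\|_{\mathbb{L}_\mu}\lesssim\delta$ uniformly on $K$, so the Lipschitz property of $f$ yields $|f(z)-g(z(t_1),\dots,z(t_k))|\lesssim\delta$ with the continuous function $g(y_1,\dots,y_k)\vcentcolon=f\bigl(\sum_i y_i\mathbbm{1}_{T_i}\bigr)$. Since $g\bigl(X_\alpha(t_1),\dots,X_\alpha(t_k)\bigr)$ is asymptotically measurable by the preliminary step combined with the continuous mapping theorem, the outer-minus-inner gap of $\mathbb{E}f(X_\alpha)$ is bounded above by $O(\varepsilon+\delta)$; sending $\varepsilon,\delta\to0$ concludes.

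The main obstacle is the approximation bound $\|z-\pi(z)\|_{\mathbb{L}_\mu}\lesssim\delta$. Uniform pointwise closeness, i.e., control of the $\|\cdot\|_T$ component, is secured by $\rho$-equicontinuity, but controlling $\int_T|z-\pi(z)|\,|d\mu|$ requires the ``plateau'' contributions $\inf_{x\in\mathbb{R}}\int_{T_i}|z-x|\,|d\mu|$ in \eqref{partition} to be uniformly small on $K$. This is exactly what $(\rho,\mu)$-equiintegrability delivers, and the subtle ``$-x$'' term in \eqref{partition} (highlighted in the Remark after \cref{dJMhMPoGhEIJvZ}) is needed to accommodate point masses of $\mu$ and plateaus in $z$, which is where the argument genuinely departs from the familiar $\ell^\infty(T)$ case.
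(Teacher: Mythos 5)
Your overall architecture is right—the ``only if'' direction via continuous coordinate projections, and the ``if'' direction mirroring van der Vaart--Wellner's Lemma 1.5.3 by approximating a Lipschitz $f$ with a cylinder function of finitely many marginals—and this is presumably what the paper means by ``follow by the same logic.'' But the central approximation estimate, as written, has a genuine gap. You set $\pi(z)=\sum_{i=1}^k z(t_i)\mathbbm{1}_{T_i}$ and claim $\|z-\pi(z)\|_{\mathbb{L}_\mu}\lesssim\delta$ on $K$, invoking equiintegrability and the ``$-x$'' term in \eqref{partition}. That condition controls $\sum_i\inf_{x\in\mathbb{R}}\int_{T_i}|z-x|\,|d\mu|$; it does \emph{not} control the quantity you need, namely $\sum_i\int_{T_i}|z-z(t_i)|\,|d\mu|$, because the infimum over $x$ is precisely what protects you from bad fixed choices of $x$. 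Concretely, if some piece has $|\mu|(T_i)=\infty$ (the paper explicitly allows this: ``the only choice of $a_{i,j}$ may be $0$'' in the proof of \cref{dJMhMPoGhEIJvZ}), then $z(t_i)\neq 0$ forces $\int_{T_i}|z-z(t_i)|\,|d\mu|=\infty$, so $\pi(z)\notin\mathbb{L}_\mu$ at all; and even on finite-measure pieces you only get $\int_{T_i}|z-z(t_i)|\,|d\mu|\leq\delta\,|\mu|(T_i)$, with $\sum_i|\mu|(T_i)$ not controlled by \eqref{partition}. So the Lipschitz bound $|f(z)-g(z(t_1),\dots,z(t_k))|\lesssim\delta$ does not follow. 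The repair is the one the paper itself carries out in (iii)~$\Rightarrow$~(ii) of \cref{dJMhMPoGhEIJvZ}: on infinite-measure pieces use the approximating constant $0$ (integrability of $z$ forces $\inf_{T_i}|z|=0$, so the oscillation bound gives $\sup_{T_i}|z|\leq\delta$, which restores the $\|\cdot\|_T$ control and bounds the $\|\cdot\|_\mu$ contribution by $\int_T|z|\mathbbm{1}\{|z|\leq\delta\}\,|d\mu|$, small uniformly on the compact $K$), and refine the finite-measure pieces until their total $|\mu|$-mass is bounded, so that $\delta\sum_{\mathrm{fin}}|\mu|(T_i)$ is small; the modified $\pi(z)(t)=z(t_i)\mathbbm{1}\{|\mu|(T_i)<\infty\}$ for $t\in T_i$ is still a continuous linear map of the finite marginal vector, so the cylinder reduction survives.

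A secondary issue: your preliminary step deduces joint asymptotic measurability of $(X_\alpha(t_1),\dots,X_\alpha(t_k))$ from coordinate-wise measurability ``because the limit is determined by its individual marginals.'' That parenthetical is false—joint laws are not determined by their marginals—so the reduction from finite-dimensional to one-dimensional measurability needs its own argument (as it does already in the $\ell^\infty(T)$ case).
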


\begin{lem} \label{lem:marginal}
If $X$ and $Y$ are tight Borel measurable maps into $\mathbb{L}_\mu$,
then $X$ and $Y$ are equal in law if and only if every marginal of $X$ and $Y$ is equal in law.
\end{lem}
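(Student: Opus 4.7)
The plan is to combine the Portmanteau theorem with a Stone--Weierstrass approximation by cylinder (coordinate) functions. The ``only if'' direction is immediate, since for any $t_1,\dots,t_k \in T$ the evaluation map $z \mapsto (z(t_1),\dots,z(t_k))$ is continuous from $\mathbb{L}_\mu$ to $\mathbb{R}^k$ and hence Borel, so the finite-dimensional laws are pushforwards of the laws of $X$ and $Y$. The content is the converse, and I would attack it as follows.

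First I would reduce the problem to showing $\mathbb{E} f(X) = \mathbb{E} f(Y)$ for every bounded continuous $f : \mathbb{L}_\mu \to \mathbb{R}$: for tight Borel probability measures on a metric space, the Portmanteau theorem makes this equivalent to equality in law. Next I would introduce the algebra $\mathcal{A}$ of cylinder functions $h(z) = g(z(t_1),\dots,z(t_k))$ with $k \in \mathbb{N}$, $t_1,\dots,t_k \in T$, and $g : \mathbb{R}^k \to \mathbb{R}$ bounded and continuous. Each coordinate projection $z \mapsto z(t)$ is continuous on $\mathbb{L}_\mu$ because $|z(t)-z'(t)| \leq \|z-z'\|_T \leq \|z-z'\|_{\mathbb{L}_\mu}$, so $\mathcal{A} \subset C_b(\mathbb{L}_\mu)$. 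Moreover $\mathcal{A}$ contains constants, is closed under addition and multiplication, and separates points of $\mathbb{L}_\mu$ (if $z_1 \neq z_2$ in $\mathbb{L}_\mu$, then $z_1(t) \neq z_2(t)$ at some $t \in T$, and the functional $z \mapsto z(t)$ distinguishes them). The hypothesis on marginals delivers $\mathbb{E} h(X) = \mathbb{E} h(Y)$ for every $h \in \mathcal{A}$.

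To pass from $\mathcal{A}$ to all of $C_b(\mathbb{L}_\mu)$ I would invoke tightness. Given $f \in C_b(\mathbb{L}_\mu)$ and $\varepsilon > 0$, choose a compact set $K \subset \mathbb{L}_\mu$ with $\Pr(X \in K) \wedge \Pr(Y \in K) \geq 1-\varepsilon$. Restricted to the compact metric space $K$, the algebra $\mathcal{A}|_K$ still contains constants, separates points, and sits inside $C(K)$, so Stone--Weierstrass produces $h \in \mathcal{A}$ with $\sup_{z \in K} |f(z)-h(z)| < \varepsilon$. Splitting the integrals $\mathbb{E} f(X) - \mathbb{E} h(X)$ and $\mathbb{E} f(Y) - \mathbb{E} h(Y)$ according to whether the underlying variable lies in $K$, and using $\|h\|_\infty \leq \|g\|_\infty < \infty$ to control the complement, yields $|\mathbb{E} f(X) - \mathbb{E} f(Y)| \leq 2\varepsilon + 2(\|f\|_\infty + \|h\|_\infty)\varepsilon$. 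Sending $\varepsilon \to 0$ completes the argument.

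The main delicate point I anticipate is the interaction between tightness and the sup-part of the $\mathbb{L}_\mu$-norm: because $L_\infty$ over an uncountable index set need not be separable, one cannot apply Stone--Weierstrass globally, and the reduction to a compact (hence separable) set $K$ via tightness is essential. Once that reduction is in place, point-separation by coordinate projections is immediate from the definition of $\mathbb{L}_\mu$ as a space of pointwise-defined functions, and the rest of the argument is standard.
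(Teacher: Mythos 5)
Your proof is correct and is exactly the route the paper has in mind: the paper's own proof of this lemma simply instructs to mirror van der Vaart and Wellner's Lemma 1.5.3, which is the tightness-plus-Stone--Weierstrass argument you spell out, the only new observation being (as you note) that coordinate projections remain continuous because $\|\cdot\|_T \leq \|\cdot\|_{\mathbb{L}_\mu}$. One small point to tighten for rigor: as written $\|h\|_\infty$ may depend on $\varepsilon$, so the bound $2\varepsilon + 2(\|f\|_\infty + \|h\|_\infty)\varepsilon$ need not vanish as $\varepsilon \to 0$; this is repaired by replacing $g$ with $(-\|f\|_\infty - \varepsilon) \vee g \wedge (\|f\|_\infty + \varepsilon)$, which stays bounded continuous, leaves $h|_K$ unchanged (since $\sup_K|h| \leq \|f\|_\infty + \varepsilon$), and yields $\|h\|_\infty \leq \|f\|_\infty + \varepsilon$ uniformly.
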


\begin{proof}[Proofs of \cref{lem:tight,lem:marginal}]
These claims are not corollaries of \cite[Lemmas 1.5.2 and 1.5.3]{vw1996} since $C_b(\mathbb{L}_\mu)$ is bigger than $C_b(\mathbb{L}_T)$ and $C_b(\mathbb{L}_1)$, but they follow by the same logic.
\end{proof}

\begin{proof}[Proof of \cref{FfdAo8VYYjsJz}]
Necessity is immediate. We prove sufficiency.
If $X_\alpha$ is asymptotically tight and its marginals converge weakly, then $X_\alpha$ is asymptotically measurable by \cref{lem:tight}.
By Prohorov's theorem \cite[Theorem 1.3.9]{vw1996}, $X_\alpha$ is relatively compact.
Take any subnet in $X_\alpha$ that is convergent.
Its limit point is unique by \cref{lem:marginal} and the assumption that every marginal converges weakly.
Thus, $X_\alpha$ converges weakly.
The last statement is another consequence of Prohorov's theorem.
\end{proof}

\begin{proof}[Proof of \cref{dJMhMPoGhEIJvZ}]
We proceed (\ref{thmA6:2}) $\Rightarrow$ (\ref{thmA6:1}) $\Rightarrow$ (\ref{thmA6:3}) $\Rightarrow$ (\ref{thmA6:2}).

(\ref{thmA6:2}) $\Rightarrow$ (\ref{thmA6:1}). Fix $\varepsilon,\eta>0$.
Pick one $t_i$ from each $T_i$.
Then, $\|X_\alpha\|_T \leq \max_i |X_\alpha(t_i)|+\varepsilon$ with inner probability at least $1-\eta$.
Since the maximum of finitely many tight nets of real variables is tight and $\|X_\alpha\|_\mu$ is assumed to be tight, it follows that the net $\|X_\alpha\|_{\mathbb{L}_\mu}$ is asymptotically tight in $\mathbb{R}$.

Fix $\zeta>0$ and take $\varepsilon_m \searrow 0$. Let $M$ satisfy $\limsup P^\ast(\|X_\alpha\|_{\mathbb{L}_\mu}>M)<\zeta$. Taking $(\varepsilon,\eta)$ in (\ref{partition}) as $(\varepsilon_m,2^{-m}\zeta)$, we obtain for each $m$ a measurable partition $T=\bigcup_{i=1}^k T_i$ (suppressing dependence on $m$). For each $T_i$, enumerate all of the finitely many values $0=a_{i,0}\leq a_{i,1}\leq\cdots\leq a_{i,p}\leq M$ such that
\[
	\int_{T_i} (a_{i,j}-a_{i,j})|d\mu| \leq \frac{\varepsilon_m}{k} \quad \text{for} \quad j=1,\dots,p \quad \text{and} \quad
	\int_{T_i} a_{i,p}|d\mu| \leq M.
\]
Since $\mu$ is not necessarily finite on the whole $T$, on some partition $T_i$ the only choice of $a_{i,j}$ may be $0$.
Let $z_1,\dots,z_q$ be the finite exhaustion of all functions in $\mathbb{L}_\mu$ that are constant on each $T_i$ and take values on
\[
	0, \pm\varepsilon_m, \dots, \pm\lfloor M/\varepsilon_m \rfloor\varepsilon_m, \quad
	\pm a_{1,1}, \dots, \pm a_{1,p}, \quad
	\dots, \quad
	\pm a_{k,1}, \dots, \pm a_{k,p}.
\]
Let $K_m$ be the union of $q$ closed balls of radius $2\varepsilon_m$ around each $z_i$.
Then, since $\inf_j \int_{T_i} |X_\alpha-a_{i,j}||d\mu|\leq\frac{\varepsilon_m}{k}+\inf_x \int_{T_i} |X_\alpha-x||d\mu|$, the three conditions
\(
	\|X_\alpha\|_T \leq M,
\)
\(
	\sup_i \sup_{s,t\in T_i} |X_\alpha(s)-X_\alpha(t)| \leq \varepsilon_m,
\)
and
\(
	\sum_i \inf_x \int_{T_i} |X_\alpha-x| |d\mu| \leq \varepsilon_m
\)
imply that $X_\alpha\in K_m$. This holds for each $m$.

Let $K=\bigcap_{m=1}^\infty K_m$, which is closed, totally bounded, and therefore compact.
Moreover, we argue that for every $\delta>0$ there exists $m$ with $K^\delta \supset \bigcap_{j=1}^m K_j$. Suppose not.
Then there is a sequence $z_m$ not in $K^\delta$, but with $z_m \in \bigcap_{j=1}^m K_j$ for every $m$.
This has a subsequence contained in only one of the closed balls constituting $K_1$, and a further subsequence contained in only one of the balls constituting $K_2$, and so on.
The diagonal sequence of such subsequences would eventually be contained in a ball of radius $2\varepsilon_m$ for every $m$.
Therefore, it is Cauchy and its limit should be in $K$, which is a contradiction to the supposition $d(z_m,K)\geq\delta$ for every $m$.

Thus, if $X_\alpha$ is not in $K^\delta$, it is not in $\bigcap_{j=1}^m K_j$ for some $m$. Therefore,
\begin{multline*}
	P^\ast(X_\alpha \notin K^\delta) \leq P^\ast \Biggl( X_\alpha \notin \bigcap_{j=1}^m K_j \Biggr)
	\leq P^\ast(\|X_\alpha\|_{\mathbb{L}_\mu} > M) \\
	+ \sum_{j=1}^m P^\ast \biggl( \biggl[ \sup_i \sup_{s,t \in T_i} |X_\alpha(s) - X_\alpha(t)| \biggr] \vee \sum_i \inf_x \int_{T_i} |X_\alpha - x| |d\mu| > \varepsilon_j \biggr) \\
	\leq \zeta + \sum_{j=1}^m \zeta 2^{-j} < 2 \zeta.
\end{multline*}
Hence, we obtain $\limsup_\alpha P^\ast(X_\alpha \notin K^\delta) < 2 \zeta$, as asserted.

(\ref{thmA6:1}) $\Rightarrow$ (\ref{thmA6:3}).
If $X_\alpha$ is asymptotically tight, then so is each coordinate projection.
Therefore, $X_\alpha(t)$ is asymptotically tight in $\mathbb{R}$ for every $t\in T$.

Let $K_1 \subset K_2 \subset \cdots$ be a sequence of compact sets such that $\liminf P_\ast(X_\alpha \in K_m^\varepsilon) \geq 1-1/m$ for every $\varepsilon>0$. Define a semimetric $d$ on $T$ induced by $z$ by
\(
	d(s,t;z) \vcentcolon= |z(s) - z(t)|
	\vee \int_T |z| \mathbbm{1}\{z(s) \wedge z(t) \leq z \leq z(s) \vee z(t)\} \mathbbm{1}\{z(s) \neq z(t)\} |d\mu|.
\)
Observe that $d(s,s;z)=0$ and that $d$ is measurable with respect to $\mu$.%
\footnote{$T$ is not necessarily complete with respect to $d$.}
Now for every $m$, define a semimetric $\rho_m$ on $T$ by
\(
	\rho_m(s,t) \vcentcolon= \sup_{z\in K_m} d(s,t;z).
\)
We argue that $(T,\rho_m)$ is totally bounded. For $\eta>0$, cover $K_m$ by finitely many balls of radius $\eta$ centered at $z_1, \dots, z_k$.
Consider the partition of $\mathbb{R}^{2k}$ into cubes of edge length $\eta$.
For each cube, if there exists $t\in T$ such that the following $2k$\hyp{}tuple is in the cube,
\begin{multline*}
	r(t) \vcentcolon= \biggl( z_1(t), \ \int_T z_1 \mathbbm{1} \{0 \wedge z_1(t) \leq z_1 \leq 0 \vee z_1(t)\} |d\mu|, \quad \dots, \\
	z_k(t), \ \int_T z_k \mathbbm{1}\{0 \wedge z_k(t) \leq z_k \leq 0 \vee z_k(t)\} |d\mu| \biggr),
\end{multline*}
then pick one such $t$.
Since $\|z_j\|_{\mathbb{L}_\mu}$ is finite for every $j$ (i.e., the diameter of $T$ measured by each $d(\cdot,\cdot;z_j)$ is finite), this gives finitely many points $t_1,\dots,t_p$.
Notice that the balls $\{t : \rho_m(t,t_i)<3\eta\}$ cover $T$, that is, $t$ is in the ball around $t_i$ for which $r(t)$ and $r(t_i)$ are in the same cube; this follows because $\rho_m(t,t_i)$ can be bounded by
\(
	2 \sup_{z \in K_m} \inf_j \|z-z_j\|_{\mathbb{L}_\mu} +{} \sup_j d(t,t_i;z_j) < 3 \eta.
\)
The first term is the error of approximating $z(t)$ and $z(t_i)$ by $z_j(t)$ and $z_j(t_i)$; the second is the distance of $t$ and $t_i$ measured by $d(\cdot, \cdot; z_j)$.

Define the semimetric $\rho$ by
\(
	\rho(s,t) \vcentcolon= \sum_{m=1}^\infty 2^{-m} \bigl( \rho_m(s,t) \wedge 1 \bigr).
\)
We show that $(T,\rho)$ is still totally bounded. For $\eta>0$ take $m$ such that $2^{-m}<\eta$.
Since $T$ is totally bounded in $\rho_m$, we may cover $T$ with finitely many $\rho_m$\hyp{}balls of radius $\eta$.
Denote by $t_1,\dots,t_p$ the centers of such a cover.
Since $K_m$ is nested, we have $\rho_1 \leq \rho_2 \leq \cdots$.
Since we also have $\rho_m(t,t_i)<\eta$, for every $t$ there exists $t_i$ such that $\rho(t,t_i)\leq\sum_{k=1}^m 2^{-k}\rho_k(t,t_i)+2^{-m}<2\eta$.
Therefore, $(T,\rho)$ is totally bounded.

By definition we have $d(s,t;z)\leq\rho_m(s,t)$ for every $z\in K_m$ and $\rho_m(s,t)\wedge1\leq2^m\rho(s,t)$.
And if $\|z_0-z\|_{\mathbb{L}_\mu}<\varepsilon$ for $z\in K_m$, then $d(s,t;z_0)<2\varepsilon+d(s,t;z)$ for every pair $(s,t)$.
Hence, we conclude
\(
	K_m^\varepsilon \subset \bigl\{ z : \sup_{\rho(s,t)<2^{-m}\varepsilon}$ $d(s,t;z) \leq 3\varepsilon \bigr\}.
\)
Therefore, for $\delta<2^{-m}\varepsilon$,
\begin{multline*}
	\liminf_\alpha P_\ast \biggl( \sup_{\rho(s,t)<\delta} d(s,t;X_\alpha) \leq 3 \varepsilon \biggr) \\
	\begin{aligned}
	&\geq \liminf_\alpha P_\ast \biggl( \sup_{t \in T} \biggl[ \sup_{\rho(s,t)<\delta} |X_\alpha(s) - X_\alpha(t)| \vee \int_{0<\rho(s,t)<\delta} |X_\alpha(s)| |d\mu| \biggr] \leq 3 \varepsilon \biggr) \\
	&\geq 1 - \frac{1}{m}.
	\end{aligned}
\end{multline*}

(\ref{thmA6:3}) $\Rightarrow$ (\ref{thmA6:2}).
For $\varepsilon, \eta > 0$, take $\delta > 0$ as given.
Since $T$ is totally bounded, it can be covered with finitely many balls of radius $\delta$; let $t_1, \dots, t_K$ be their centers.
Disjointify the balls to obtain $\{T_i^\varepsilon\}$.
If $\int_{\{t_i\}} |X_\alpha| |d\mu| > 0$, then separate the partition $T_i^\varepsilon$ into $\{t_i\}$ and $T_i^\varepsilon \setminus \{t_i\}$.

There are three types of components in the partition: (a) singleton components (mass points) of $\mu$, (b) components with $|\mu|(T_i^\varepsilon) = \infty$, and (c) components with $|\mu|(T_i^\varepsilon) < \infty$.
The size of (a) is controlled by construction, so we control (b) and (c).
Clearly,
\begin{equation} \label{eq:LNSygY7HxTI29zAsT}
	\sup_{s,t \in T_i^{\varepsilon}} |X_\alpha(s)-X_\alpha(t)| \leq 2 \sup_{\rho(s,t_i)<\delta} |X_\alpha(s)-X_\alpha(t_i)| \leq 2 \varepsilon.
\end{equation}
Denote by $i_\infty$ the index for which $|\mu|(T_{i_\infty}^\varepsilon) = \infty$.
Now we argue that $\sum_{i_\infty} \int_{T_{i_\infty}^\varepsilon} |X_\alpha| |d\mu|$ can be arbitrarily small (with inner probability at least $1-\eta$) for sufficiently small $\varepsilon$.
By construction, $\sup_{s \in T_{i_\infty}^\varepsilon} |X_\alpha(s)| \leq 2\varepsilon$.%
\footnote{This follows because $\inf_{T_{i_\infty}^\varepsilon} |X_\alpha| = 0$ given that $\int_{T_{i_\infty}^\varepsilon} |X_\alpha| |d\mu| < \infty$.}
Thus, $\sum_{i_\infty} \int_{T_{i_\infty}^\varepsilon} |X_\alpha| |d\mu| \leq \int_T |X_\alpha| \mathbbm{1}\{|X_\alpha| \leq 2\varepsilon\} |d\mu|$.
Since $T$ is totally bounded, $\int_T |X_\alpha| |d\mu|$ is bounded by $K\varepsilon$ with inner probability at least $1-\eta$ (proving asymptotic tightness of $\|X_\alpha\|_\mu$), and hence the previous integral must be arbitrarily small for small $\varepsilon$.
Now we turn to (c).
Let $\varepsilon'$ be such that
\begin{equation} \label{eq:HvTlSWEYWxlmTaN1i86}
	\limsup_\alpha P^\ast \biggl( \int_T |X_\alpha| \mathbbm{1}\{|X_\alpha| \leq 3 \varepsilon'\} |d\mu| > \varepsilon \biggr) < 1 - \eta.
\end{equation}
For each $T_i^\varepsilon$ with $|\mu|(T_i^\varepsilon) < \infty$, construct a further partition of it $\{T_j^{\varepsilon'}\}$ with this $\varepsilon'$.
Note that $\{T_{i_\infty}^\varepsilon\} \cup \{T_j^{\varepsilon'}\}$ defines another finite partition of $T$.
If there exists $s \in T_j^{\varepsilon'}$ such that $|X_\alpha(s)| \leq \varepsilon'$, then by construction $\sup_{t \in T_j^{\varepsilon'}} |X_\alpha(t)| \leq 3 \varepsilon'$.
The contrapositive of this is also true.
Thus, observing
\(
	\sum_j \inf_x \int_{T_j^{\varepsilon'}} |X_\alpha - x| |d\mu|
	\leq \sum_j \inf_x \int_{T_j^{\varepsilon'}} |X_\alpha - x| \mathbbm{1}\{|X_\alpha| > \varepsilon'\} |d\mu| + \int_T |X_\alpha| \mathbbm{1}\{|X_\alpha| \leq 3 \varepsilon'\} |d\mu|,
\)
we may assume $\inf_{T'} |X_\alpha(s)| \geq \varepsilon' > 0$ at the cost of one more $\varepsilon$.
Then, we also have $\int_{T'} |d\mu| \leq K\varepsilon/\varepsilon'$ since $\varepsilon' \int_{T'} |d\mu| \leq \int_T |X_\alpha| |d\mu|$.
For the partition $T_j^{\varepsilon'}$ of $T'$, further construct a nested finite partition $T_k^{\varepsilon'/K}$. Now
\begin{multline*}
	\sum_k \inf_x \int_{T_k^{\varepsilon'/K}} |X_\alpha - x| |d\mu| \\
	\leq \sum_k \sup_{s, t \in T_k^{\varepsilon'/K}} |X_\alpha(s) - X_\alpha(t)| \int_{T_k^{\varepsilon'/K}} |d\mu|
	\leq \frac{\varepsilon'}{K} \int_{T'} |d\mu| \leq \varepsilon
\end{multline*}
with inner probability at least $1-\eta$.
This, (\ref{eq:LNSygY7HxTI29zAsT}), and (\ref{eq:HvTlSWEYWxlmTaN1i86}) yield the result.
%
\end{proof}

\begin{proof}[Proof of \cref{X4cQDt7zeDM7E9hTYSZ}]
We first work on the case $r<1$.
Define
\(
	\rho(s,t) \vcentcolon= \int_{(s,t)} u^r(1-u)^r dQ.
\)
We show that $(0,1)$ is totally bounded with respect to $\rho$.
Observe that \cref{lem:t97410h} (\ref{lemA1:5}) and $r>\frac{1}{2+c}$ imply $u^r(1-u)^r Q(u) \to 0$ as $u\to\{0,1\}$.
Therefore, integrating by parts,
\[
	\rho(0,1) \leq \int_{(0,1)} u^r \wedge(1-u)^r dQ \leq |Q|\Bigl(\frac{1}{2}\Bigr) + \int_0^{\frac{1}{2}} u^{r-1} |Q| du + \int_{\frac{1}{2}}^1 (1-u)^{r-1} |Q| du.
\]
Since $Q \in L_{2+c}$ and $u^{r-1}\wedge(1-u)^{r-1} \in L_q$ for every $q<1/(1-r)$, in particular for $q=(2+c)/(1+c)$, this integral is finite by H\"older's inequality.
This means the diameter of $(0,1)$ is finite, so $(0,1)$ is totally bounded.

Note that $|Q|$ is eventually smaller than $u^{-r}(1-u)^{-r}$ near $0$ and $1$, so that for every $\eta>0$ there exists $M$ such that
\[
	\limsup_\alpha P^\ast(\|(|Q|\vee1)X_\alpha\|_\infty>M)\leq\limsup_\alpha P^\ast\biggl(\biggl\|\frac{X_\alpha}{u^r(1-u)^r}\biggr\|_\infty>M\biggr)<\eta.
\]
This shows uniform equicontinuity.
Next, for every $0<s\leq t<1$,
\[
	\int_{(s,t)}|X_\alpha|dQ\leq\sup_{u\in(0,1)}\frac{|X_\alpha(u)|}{u^r(1-u)^r}\int_{(s,t)}v^r(1-v)^r dQ
	\leq\biggl\|\frac{X_\alpha}{u^r(1-u)^r}\biggr\|_\infty\rho(s,t).
\]
Therefore,
\[
	P^\ast \biggl( \sup_{t\in(0,1)} \int_{0<\rho(s,t)<\delta} |X_\alpha| dQ(s) > \varepsilon \biggr)
	\leq P^\ast \biggl( \biggl\| \frac{X_\alpha}{u^r(1-u)^r} \biggr\|_\infty > \frac{\varepsilon}{\delta} \biggr).
\]
By assumption, this can be however small by the choice of $\delta$.
Conclude that $X_\alpha$ is asymptotically $(\rho,Q)$\hyp{}equiintegrable in probability.

Finally, for $r \geq 1$, replace every $r$ by $1/2$. Then the result follows since $\bigl\| \frac{X_\alpha}{u^r(1-r)^r} \bigr\|_\infty \geq \bigl\| \frac{X_\alpha}{u^{1/2}(1-u)^{1/2}} \bigr\|_\infty$.
\end{proof}

\begin{proof}[Proof of \cref{p34AI8lskauT}]
Assume without loss of generality $M=1$.
Define $U_{(0)} \vcentcolon= 0$. Let $\tilde{\mathbb{F}}_n$ and $\tilde{\mathbb{G}}_n$ be the continuous linear interpolations of $\mathbb{F}_n$ and $\mathbb{G}_n$, that is, for $U_{(i-1)} \leq u < U_{(i)}$,
\(
	\tilde{\mathbb{F}}_n(u) \vcentcolon= \frac{i-1}{n} + \frac{u-U_{(i-1)}}{n(U_{(i)}-U_{(i-1)})}
\)
and
\(
	\tilde{\mathbb{G}}_n(u) \vcentcolon= \frac{1}{n} \sum_{i=1}^n w_{i,n} \mathbbm{1}\{U_i\leq u\} + \frac{w_{i,n}(u-U_{(i-1)})}{n(U_{(i)}-U_{(i-1)})},
\)
and for $u\geq U_{(n)}$, $\tilde{\mathbb{F}}_n(u) \vcentcolon= 1$ and $\tilde{\mathbb{G}}_n(u) \vcentcolon= \frac{1}{n} \sum w_{i,n}$.
Observe that $\mathbb{K}_n(u) = \tilde{\mathbb{G}}_n(\tilde{\mathbb{F}}_n^{-1}(u))$.
By \cref{bZivHihLTt5o} it suffices to show that $\sqrt{n}(\tilde{\mathbb{F}}_n-I)$ and $\sqrt{n}(\tilde{\mathbb{G}}_n-K)$ converge weakly jointly in $\mathbb{L}_Q$.
Note that
\(
	\|\mathbb{F}_n-I\|_\infty - \frac{1}{n} \leq \| \tilde{\mathbb{F}}_n-I \|_\infty \leq \|\mathbb{F}_n-I\|_\infty + \frac{1}{n}
\)
and
\(
	\|\mathbb{F}_n-I\|_Q - C \leq \|\tilde{\mathbb{F}}_n-I\|_Q \leq \|\mathbb{F}_n-I\|_Q + C
\)
for $C\vcentcolon=\int (\tilde{I}-\lfloor n\tilde{I}\rfloor/n) dQ=O(1/n)$.
Thus, $\sqrt{n}(\tilde{\mathbb{F}}_n-I)$ converges weakly in $\mathbb{L}_Q$ if and only if $\sqrt{n}(\mathbb{F}_n-I)$ does, and they share the same limit.
The same is true for $\tilde{\mathbb{G}}_n$ and $\mathbb{G}_n$.

The classical results imply that $\sqrt{n}(\mathbb{F}_n-I)$ converges weakly in $L_\infty$ to a Brownian bridge and
\(
	\bigl\| \frac{\sqrt{n}(\mathbb{F}_n-I)}{u^r(1-u)^r} \bigr\|_\infty = O_P(1)
\)
for every $r<1/2$ \cite{ch1993}.
By \cref{X4cQDt7zeDM7E9hTYSZ,dJMhMPoGhEIJvZ}, it follows that $\sqrt{n}(\mathbb{F}_n-I)$ converges weakly in $\mathbb{L}_Q$.
By assumption $\sqrt{n}(\mathbb{G}_n-K)$ converges weakly in $L_\infty$ jointly with $\sqrt{n}(\mathbb{F}_n-I)$, and since $M|\mathbb{F}_n-I|\geq|\mathbb{G}_n-K|$, conclude that $\sqrt{n}(\mathbb{G}_n-K)$ converges weakly in $\mathbb{L}_Q$ jointly with $\sqrt{n}(\mathbb{F}_n-I)$.
\end{proof}

\begin{lem}[Inverse composition map] \label{bZivHihLTt5o}
Let $\mathbb{L}_Q$ contain the identity map $I(u)\vcentcolon=u$.
Let $\mathcal{D}$ be the subset of $\mathbb{L}_Q \times \mathbb{L}_Q$ such that every $(A,B) \in \mathcal{D}$ satisfies $A(u_1)-A(u_2)\geq B(u_1)-B(u_2)\geq0$ for every $u_1\geq u_2$, the range of $A$ contains $(0,1)$, and $B$ is differentiable and Lipschitz.
Let $\mathbb{L}_{Q,\textrm{UC}}$ be the subset of $\mathbb{L}_Q$ of uniformly continuous functions.
Then, the map $\chi : \mathcal{D} \to \mathbb{L}_Q$, $\chi(A,B) \vcentcolon= B\circ A^{-1}$, is Hadamard differentiable at $(A,B) \in \mathcal{D}$ for $A=I$ tangentially to $\mathbb{L}_Q \times \mathbb{L}_{Q,\textrm{UC}}$. The derivative is given by
\(
	\chi_{I,B}'(a,b)(u) = b(u) + B'(u) a(u)$ for $u \in (0,1).
\)
\end{lem}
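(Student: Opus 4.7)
The plan is to verify Hadamard differentiability directly from the definition. Fix sequences $t_n \to 0$ and $(a_n, b_n) \to (a, b)$ in $\mathbb{L}_Q \times \mathbb{L}_{Q,\text{UC}}$ with $(A_n, B_n) \vcentcolon= (I + t_n a_n, B + t_n b_n) \in \mathcal{D}$, and aim to show
\[
	\frac{B_n \circ A_n^{-1} - B}{t_n} \longrightarrow \chi'_{I,B}(a,b) \quad \text{in } \mathbb{L}_Q.
\]
The cornerstone is the tautology $A_n(A_n^{-1}(u)) = u$, which rearranges to the exact identity
\[
	\frac{A_n^{-1}(u) - u}{t_n} = -a_n\bigl(A_n^{-1}(u)\bigr), \quad u \in (0,1).
\]
Since elements of $\mathbb{L}_Q$ are bounded and $\|a_n\|_{\mathbb{L}_Q}$ converges, this immediately yields $\|A_n^{-1} - I\|_\infty = O(t_n) \to 0$. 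Splitting the difference quotient and applying the mean-value theorem to the differentiable $B$ then gives
\[
	\frac{B_n(A_n^{-1}(u)) - B(u)}{t_n} = b_n\bigl(A_n^{-1}(u)\bigr) - B'\bigl(\xi_n(u)\bigr)\, a_n\bigl(A_n^{-1}(u)\bigr),
\]
with $\xi_n(u)$ between $u$ and $A_n^{-1}(u)$, so $\xi_n \to I$ uniformly.

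For the supremum part $\|\cdot\|_{Q,\infty}$ of the $\mathbb{L}_Q$-norm, I will combine uniform continuity of $b$ (since $b \in \mathbb{L}_{Q,\text{UC}}$), continuity of $B'$ (implicit in the hypothesis that $B$ is differentiable and Lipschitz), and $\|A_n^{-1} - I\|_\infty \to 0$ to obtain $\|b \circ A_n^{-1} - b\|_\infty \to 0$ and $\|B'(\xi_n) - B'\|_\infty \to 0$. Multiplying by the weight $(|Q| \vee 1)$ and invoking $\|a_n\|_{Q,\infty} = O(1)$, $\|b_n - b\|_{Q,\infty} \to 0$, and boundedness of $B'$, the entire expression converges in the weighted sup norm.

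The main obstacle is the $Q$-integral part $\|\cdot\|_Q$, because $\int_0^1 dQ$ may be infinite while $a$ is only $\mathbb{L}_Q$-integrable and need not be continuous, so uniform closeness of $A_n^{-1}$ to $I$ does not transfer through the $L_1(Q)$-norm. My plan is an approximation argument: for each $\varepsilon > 0$, choose continuous $a^\varepsilon, b^\varepsilon$ compactly supported in $(0,1)$ with $\|a - a^\varepsilon\|_Q, \|b - b^\varepsilon\|_Q < \varepsilon$, and decompose
\[
	a_n \circ A_n^{-1} - a = (a_n - a^\varepsilon) \circ A_n^{-1} + \bigl(a^\varepsilon \circ A_n^{-1} - a^\varepsilon\bigr) + (a^\varepsilon - a).
\]
The middle term vanishes in $L_1(Q)$ by dominated convergence, since $a^\varepsilon$ is continuous with compact support in $(0,1)$ and $Q$ has finite total variation on that support. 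A monotone change of variables $v = A_n^{-1}(u)$ bounds the first term by $\|a_n - a^\varepsilon\|_Q$ up to a factor $1 + o(1)$ reflecting $Q \circ A_n \to Q$ on compacta, and this is in turn bounded by $\|a_n - a\|_Q + \varepsilon \to \varepsilon$. Taking $n \to \infty$ then $\varepsilon \to 0$ gives $\|a_n \circ A_n^{-1} - a\|_Q \to 0$, and the identical argument handles $b_n \circ A_n^{-1} - b$. Combined with the sup-norm convergence, this identifies the Hadamard derivative and completes the proof.
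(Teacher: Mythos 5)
The crucial step that fails is the claimed bound on $\int_0^1 |(a_n - a^\varepsilon)\circ A_n^{-1}|\,dQ$ by $\|a_n - a^\varepsilon\|_Q$ up to a factor $1+o(1)$. The substitution $u = A_n(v)$ transforms this integral into $\int |(a_n - a^\varepsilon)(v)|\,d(Q\circ A_n)(v)$, and the measures $d(Q\circ A_n)$ and $dQ$ need not be comparable near the endpoints $0$ and $1$ where $Q$ may be unbounded. Your appeal to ``$Q\circ A_n \to Q$ on compacta'' controls only the interior; in the tails a perturbation of the argument of size $O(t_n)$ can alter the increments of a steep $Q$ by an amount that is not controlled, and $a_n - a^\varepsilon$ is not compactly supported, so its tail integral against $d(Q\circ A_n)$ is not tied to $\|a_n - a^\varepsilon\|_Q$. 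In short, your mean-value expansion expresses the difference quotient through the \emph{composed} functions $a_n\circ A_n^{-1}$ and $b_n\circ A_n^{-1}$, for which no a priori $\mathbb{L}_Q$ bound is available, and no part of your argument manufactures one.

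Notice that your proof never uses the structural condition $A(u_1)-A(u_2) \geq B(u_1)-B(u_2) \geq 0$ defining $\mathcal{D}$, and this is exactly what is missing. Substituting $(u_1,u_2) = (u, A_t^{-1}(u))$ into that inequality yields the pointwise bound $|B_t\circ A_t^{-1} - B| \leq |A_t - I| + |B_t - B| = |t a_t| + |t b_t|$, which bounds the composed quotient by \emph{uncomposed} functions. This is what the paper uses: after splitting $\|\cdot\|_Q$ into a middle part (killed by sup-norm convergence from the vW Lemma 3.9.27 applied to $(A^{-1}, QB)$) and two tail parts, each tail part is bounded by $\int_0^\varepsilon (|a_t|+|b_t|+|b|+|B'a|)\,dQ$, which is small uniformly in $t$ since $a_t \to a$ and $b_t \to b$ in $\mathbb{L}_Q$. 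No change of variables is needed, and the tail estimate is immediate. Your sup-norm argument has a related weight-mismatch: $\|a_n\|_{Q,\infty}$ controls $(|Q|\vee 1)(A_n^{-1}(u))\,a_n(A_n^{-1}(u))$, not $(|Q|\vee 1)(u)\,a_n(A_n^{-1}(u))$, and the ratio of weights is not obviously bounded near the endpoints; the paper avoids this by folding $Q$ into the second coordinate before applying the composition lemma.
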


\begin{proof}
For $(A,B)\in\mathcal{D}$ and $u_1 \geq u_2$, denote $v_1\vcentcolon=A(u_1)$ and $v_2\vcentcolon=A(u_2)$.
By assumption we have $v_1-v_2\geq B(A^{-1}(v_1))-B(A^{-1}(v_2))\geq0$ for every $v_1\geq v_2$.
Therefore, $B\circ A^{-1}$ is monotone and bounded by the identity map up to a constant.
This implies
\(
	\int_{(0,1)} \bigl| \widetilde{B \circ A^{-1}} \bigr| dQ \leq \int_{(0,1)} |\tilde{I}| dQ < \infty
\)
and $\|Q(B\circ A^{-1})\|_\infty<\infty$; it follows that $B \circ A^{-1}$ is in $\mathbb{L}_{Q,1}$.

Let $a_t \to a$ and $b_t \to b$ in $\mathbb{L}_Q$ and $(A_t,B_t)\vcentcolon=(I+ta_t,B+tb_t) \in \mathcal{D}$.
We want to show
\(
	\bigl\| \frac{B_t\circ A_t^{-1}-B\circ I^{-1}}{t} - b - B' a \bigr\|_{\mathbb{L}_Q} \conv 0
\)
as $t\to0$.
That $\|\cdot\|_{Q,\infty} \to 0$ follows by applying \cite[Lemma 3.9.27]{vw1996} to $(A^{-1},QB)$ as elements in $L_\infty$.
Thus, it remains to show $\|\cdot\|_Q \to 0$.
In the assumed inequality, substitute $(u_1,u_2)$ by $(u,A_t^{-1}(u))$ to find that
\(
	|A_t(u)-u| \geq |B_t(A_t^{-1}(u))-B_t(u)| \geq 0.
\)
Therefore, the following inequality holds pointwise:
\(
	|B_t\circ A_t^{-1}-B| \leq |B_t\circ A_t^{-1}-B_t|+|B_t-B| \leq |A_t-I|+|B_t-B|=|ta_t|+|tb_t|.
\)
For $\varepsilon>0$, write $\|\cdot\|_Q$ as
\(
	\bigl( \int_0^{\varepsilon} + \int_\varepsilon^{1-\varepsilon} + \int_{1-\varepsilon}^1 \bigr) \bigl| \frac{B_t\circ A_t^{-1}-B}{t} - b - B' a \bigr| dQ.
\)
For any fixed $\varepsilon>0$ the middle term vanishes as $t\to0$ since $\|\cdot\|_{Q,\infty}\to0$.
It remains to show that the first term can be arbitrarily small since then by symmetry the third term is also ignorable.
Using the above inequality, write
\(
	\int_0^{\varepsilon} \bigl| \frac{B_t\circ A_t^{-1}-B}{t} - b - B' a \bigr| dQ \leq \int_0^{\varepsilon} (|a_t|+|b_t|+|b|+|B'a|) dQ.
\)
Since $\|a_t-a\|_Q\to0$ and $\|b_t-b\|_Q\to0$, this integral can be arbitrarily small by the choice of $\varepsilon$, as desired.
\end{proof}


\subsection{Convergence of Quantile Processes as Integrable Processes}

If $m$ is an identity, we denote $\mathbb{L}_m$ and $\mathbb{L}_{m,\phi}$ by $\mathbb{L}$ and $\mathbb{L}_\phi$, and $\|\cdot\|_m$ by $\|\cdot\|_1$.

We first establish differentiability of the inverse map for distribution functions with finite first moments.

\begin{lem}[Inverse map] \label{2v7DhDjurrWYxBof}
Let $F \in \mathbb{L}_\phi$ be a distribution function on (an interval of) $\mathbb{R}$ that has at most finitely many jumps and is otherwise continuously differentiable with strictly positive density $f$.
Then, the inverse map $\phi : \mathbb{L}_\phi \to \mathbb{B}$, $\phi(F)\vcentcolon=Q=F^{-1}$, is Hadamard differentiable at $F$ tangentially to the set $\mathbb{L}_0$ of all continuous functions in $\mathbb{L}$. The derivative is
\(
	\phi'_F(z) = -(z\circ Q) Q'. 
\)
\end{lem}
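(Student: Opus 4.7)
The plan is to prove Hadamard differentiability by combining (i) Lebesgue-almost-everywhere pointwise convergence of the difference quotient, (ii) an exact $L_1$-norm identity linking the quantile perturbation to $z_t$, and (iii) Scheff\'e's lemma to upgrade a.e.\ convergence plus matching $L_1$ norms into $\mathbb{B}$-convergence. Throughout, write $F_t \vcentcolon= F + t z_t$ and $Q_t \vcentcolon= F_t^{-1}$; since $F_t \in \mathbb{L}_\phi$ forces $z_t(\pm\infty) = 0$, one has $z(\pm\infty) = 0$ in the limit, so $\tilde{z} = z$ and $\|z\|_m = \int_{\mathbb{R}} |z(x)| \, dx < \infty$.

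\emph{Step 1 (Pointwise convergence).} I will first establish
\[
    \frac{Q_t(u) - Q(u)}{t} \longrightarrow -\frac{z(Q(u))}{f(Q(u))} \qquad \text{for Lebesgue-a.e.\ } u \in (0,1).
\]
The finitely many jumps $x_1, \dots, x_k$ of $F$ induce finitely many open plateaus of $Q$. On the interior of a plateau, $Q(u) = x_i$ and $Q'(u) = 0$; the jump of $F_t$ at $x_i$ has the same size as that of $F$ shifted vertically by $O(t)$, so $Q_t(u) = Q(u) = x_i$ eventually, and both sides vanish. Off the plateaus, $F$ is continuously differentiable with $f(Q(u)) > 0$ and, for a.e.\ such $u$, $F_t$ is continuous at $Q_t(u)$, so $F_t(Q_t(u)) = u = F(Q(u))$; a first-order Taylor expansion of $F$ at $Q(u)$, together with $Q_t(u) \to Q(u)$ (from $\|F_t - F\|_\infty \to 0$ and strict monotonicity of $F$) and $z_t(Q_t(u)) \to z(Q(u))$ (from $\|z_t - z\|_\infty \to 0$ and continuity of $z$), yields the limit.

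\emph{Step 2 ($L_1$ identity and Scheff\'e).} By the Kantorovich--Rubinstein identity, obtained via a one-line Fubini swap using $F^{-1}(u) \leq x \Leftrightarrow u \leq F(x)$,
\[
    \int_0^1 |Q_t(u) - Q(u)| \, du = \int_{\mathbb{R}} |F_t(x) - F(x)| \, dx,
\]
giving $\|(Q_t - Q)/t\|_\mathbb{B} = \|z_t\|_m \to \|z\|_m$. On the other hand, the substitution $x = Q(u)$ on the non-plateau part (where $du = f(x) \, dx$, with plateaus contributing $0$ because $Q' = 0$) yields
\[
    \|\phi_F'(z)\|_\mathbb{B} = \int_0^1 \frac{|z(Q(u))|}{f(Q(u))} \, du = \int_{\mathbb{R}} |z(x)| \, dx = \|z\|_m.
\]
Combining a.e.\ convergence from Step 1 with matching $L_1$ norms, Scheff\'e's lemma (if $g_t \to g$ a.e.\ and $\|g_t\|_1 \to \|g\|_1$, then $\|g_t - g\|_1 \to 0$) delivers $\|(Q_t - Q)/t - \phi_F'(z)\|_\mathbb{B} \to 0$.

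\emph{Main obstacle.} The classical Hadamard-differentiability proof for the inverse map in $L_\infty$ requires truncation away from $u = 0, 1$ because the density vanishes in the tails. This approach sidesteps that issue because the identity in Step 2 is an \emph{equality} rather than an inequality: the entire tail behavior of $Q_t - Q$ is absorbed into the global norm $\|z_t\|_m$, which converges to the derivative's norm by hypothesis. The remaining technical care is to check that a.e.\ pointwise convergence is not spoiled by any new jumps of $F_t$ that $z_t$ may introduce; since such jumps produce plateaus of $Q_t$ of total Lebesgue measure $O(t)$, for a.e.\ $u$ one is eventually outside them and the Taylor argument applies.
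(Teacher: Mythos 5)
Your proof is correct and follows a genuinely different route from the paper's. The paper handles the $L_1$ tail behavior by hand: it first isolates $\varepsilon$-neighborhoods of the finitely many jumps of $F$, then applies the classical sup-norm Hadamard differentiability of the inverse map on a truncated interval $[F(-M)+\varepsilon,\,F(M)-\varepsilon]$, and finally bounds the remaining $\int_0^{2\varepsilon}$ and $\int_{1-2\varepsilon}^1$ pieces using a one-sided Fubini estimate (essentially one direction of your Kantorovich--Rubinstein identity, used as an inequality) plus a change of variables for $\int|\phi_F'(z)|$. You instead use the exact $L_1$ identity $\int_0^1|Q_t-Q|\,du=\int_{\mathbb{R}}|F_t-F|\,dx$, which collapses the tail bookkeeping into the single equality $\|(Q_t-Q)/t\|_{\mathbb{B}}=\|z_t\|_m\to\|z\|_m$, and close with the Riesz--Scheff\'e lemma after establishing a.e.\ pointwise convergence. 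This is shorter, and it gives a conceptual reason why no truncation is needed in $L_1$: the exactness of the Wasserstein-1 identity means nothing can be lost in the tails. Two remarks. First, the plateau caveat you flag at the end is actually unnecessary: even if $Q_t(u)$ lands on a jump point of $F_t$, the sandwich $F_t(Q_t(u)^-)\leq u\leq F_t(Q_t(u))$ together with continuity of $F$ and $z$ at $Q(u)$ yields $-z_t(Q_t(u))\leq [F(Q_t(u))-F(Q(u))]/t\leq -z_t(Q_t(u)^-)$ for $t>0$, both sides tending to $-z(Q(u))$, so the mean-value argument applies with no measure-theoretic exception. Second, both your proof and the paper's implicitly use that $z$ vanishes a.e.\ outside the support of $F$ (so that $\|\phi_F'(z)\|_{\mathbb{B}}$, computed by change of variables over the support, equals $\|z\|_m$; in the paper this appears as the claim that $\int_{-\infty}^{F^{-1}(2\varepsilon)}|z|\,dx$ can be made small). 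This is automatic when $f>0$ on all of $\mathbb{R}$, which is presumably the intended reading of the hypothesis.
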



\begin{proof}[Proof of \cref{2v7DhDjurrWYxBof}]
Take $z_t \to z$ in $\mathbb{L}$ and $F_t \vcentcolon= F+tz_t \in \mathbb{L}_\phi$.
We want to show
\(
	\bigl\| \frac{\phi(F_t)-\phi(F)}{t}-\phi'_F(z) \bigr\|_{\mathbb{B}}=\int_0^1 \bigl| \frac{\phi(F_t)-\phi(F)}{t}-\phi'_F(z) \bigr| du \conv 0
\)
as $t \to 0$.
Let $j\in\mathbb{R}$ be a point of jump of $F$.
For small $\varepsilon>0$, split the integral as
\[
	\biggl( \int_0^{F(j-\varepsilon)} + \int_{F(j-\varepsilon)}^{F(j+\varepsilon)} + \int_{F(j+\varepsilon)}^1 \biggr) \biggl| \frac{\phi(F_t)-\phi(F)}{t}-\phi'_F(z) \biggr| du.
\]
Observe that the second integral is bounded by
\[
	2\varepsilon\biggl\|\frac{F_t-F}{t}\biggr\|_\infty + \biggl( \int_{F(j-\varepsilon)}^{F(j-)} + \int_{F(j)}^{F(j+\varepsilon)} \biggr) |\phi_F'(z)|du.
\]
The first term of this equals $2\varepsilon\|z_t\|_\infty$ and can be arbitrarily small by the choice of $\varepsilon$.
If $\varepsilon$ is small enough that there is no other jump in $[j-\varepsilon,j+\varepsilon]$, by Fubini's theorem,
\(
	\int_{F(j-\varepsilon)}^{F(j-)}|\phi_F'(z)|du = \int_{j-\varepsilon}^{j} \bigl|\frac{z}{f}\bigr|dF \leq \varepsilon\|z\|_\infty,
\)
which can be, again, arbitrarily small.
Similarly for the last integral.
Therefore, we ignore finitely many jumps of $F$ so that $f>0$ everywhere.

For $\varepsilon>0$ there exists $M$ such that $F(-M) < \varepsilon$ and $1-F(M) < \varepsilon$.
Write
\begin{multline*}
	\biggl\| \frac{\phi(F_t)-\phi(F)}{t}-\phi'_F(z) \biggr\|_{\mathbb{B}}
	\leq \int_{F(-M)+\varepsilon}^{F(M)-\varepsilon} \biggl| \frac{\phi(F_t)-\phi(F)}{t}-\phi'_F(z) \biggr| du \\
	+ \biggl( \int_0^{2\varepsilon} + \int_{1-2\varepsilon}^1 \biggr) \biggl| \frac{\phi(F_t)-\phi(F)}{t}-\phi'_F(z) \biggr| du.
\end{multline*}
By \cite[Theorem 3.9.23 (i)]{vw1996}, the integrand vanishes uniformly on $[F(-M)+\varepsilon,F(M)-\varepsilon]$.
As the first integral is bounded by
\(
	\sup_{u\in[F(-M)+\varepsilon,F(M)-\varepsilon]}$ $\bigl|\frac{\phi(F_t)-\phi(F)}{t}-\phi_F'(z)\bigr|,
\)
it vanishes as $t\to0$.
Now turn to the second integral. The triangle inequality bounds it by
\(
	\int_0^{2\varepsilon} \bigl| \frac{\phi(F_t)-\phi(F)}{t} \bigr| du + \int_0^{2\varepsilon} |\phi'_F(z)| du.
\)
Since $F$ and $F_t$ are nondecreasing, by Fubini's theorem,
\begin{multline*}
	\int_0^{2\varepsilon} \biggl| \frac{\phi(F_t)-\phi(F)}{t} \biggr| du = \frac{1}{|t|} \int_0^{2\varepsilon} \bigl| F_t^{-1}-F^{-1} \bigr| du \\
	\leq \frac{1}{|t|} \int_{-\infty}^{F^{-1}(2\varepsilon+\|tz_t\|_\infty)} |tz_t| dx \leq \|z_t-z\|_1 + \int_{-\infty}^{F^{-1}(2\varepsilon+t\|z_t\|_\infty)} |z| dx.
\end{multline*}
The first term goes to $0$ and the second term can be arbitrarily small by the choice of $\varepsilon$.
Finally, by the change of variables,
\(
	\int_0^{2\varepsilon} |\phi'_F(z)| du = \int_{-\infty}^{F^{-1}(2\varepsilon)} \bigl| \frac{z}{f} \bigr| dF = \int_{-\infty}^{F^{-1}(2\varepsilon)} |z| dx,
\)
which can be arbitrarily small.
Likewise, the integral from $1-2\varepsilon$ to $1$ converges to $0$.
This completes the proof.
\end{proof}

Now we allow transformations of locally bounded variation.
A function of locally bounded variation admits decomposition into the difference of two monotone functions.
Then, we exploit the relationship $m(F^{-1}) = (F \circ m^{-1})^{-1}$ for a monotone $m$ and use the chain rule.

\begin{proof}[Proof of \cref{XCNkYqAFjk2IK6F}]
Since $m$ is of locally bounded variation, write $m(x) = m_1(x)-m_2(x)$ where $m_1$ and $m_2$ are increasing.
Moreover, $m_1$ and $m_2$ can be chosen to be continuously differentiable and strictly increasing, and for their corresponding Lebesgue\hyp{}Stieltjes measures $\mu_1$ and $\mu_2$, $F$ belongs to both $\mathbb{L}_{\mu_1,\phi}$ and $\mathbb{L}_{\mu_2,\phi}$.
Since the derivative formula is linear in $m'$, it suffices to show the claim for $m_1$ and $m_2$ separately.
%
Now observe that $z$ is in $\mathbb{L}_\mu$ (or $\mathbb{L}_{\mu,0}$) if and only if $z \circ m^{-1}$ is in $\mathbb{L}$ (or $\mathbb{L}_0$).
The assertion then follows by \cite[Lemma 3.9.3]{vw1996} applied to \cref{2v7DhDjurrWYxBof,fB7dW2UeCqyz3bdNZp}.
\end{proof}

\begin{lem} \label{fB7dW2UeCqyz3bdNZp}
Let $m : \mathbb{R} \to \mathbb{R}$ be a strictly increasing continuous function and $\mu$ be the associated Lebesgue\hyp{}Stieltjes measure.
Then, the map $\psi : \mathbb{L}_\mu \to \mathbb{L}$, $\psi(F) \vcentcolon= F \circ m^{-1}$, is 
uniformly Fr\'echet differentiable with rate function $q \equiv 0$.%
\footnote{A map $\psi : \mathbb{L} \to \mathbb{B}$ is {\em uniformly Fr\'echet differentiable with rate function $q$} if there exists a continuous linear map $\psi_F' : \mathbb{L} \to \mathbb{B}$ such that $\|\psi(F+z)-\psi(F)-\psi_F'(z)\|_{\mathbb{B}}=O(q(\|z\|_{\mathbb{L}}))$ uniformly over $F\in\mathbb{L}$ as $z \to 0$ and $q$ is monotone with $q(t)=o(t)$.}
The derivative is given by $\psi_F'(z) \vcentcolon= z \circ m^{-1}$.
\end{lem}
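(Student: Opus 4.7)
The plan is to observe that $\psi$ is already a continuous linear map, so its Fr\'echet derivative equals $\psi$ itself and the remainder vanishes identically. Indeed,
\[
	\psi(F+z) - \psi(F) = (F+z)\circ m^{-1} - F\circ m^{-1} = z\circ m^{-1} = \psi_F'(z),
\]
so $\|\psi(F+z) - \psi(F) - \psi_F'(z)\|_{\mathbb{L}} \equiv 0$ uniformly over $F \in \mathbb{L}_\mu$. This trivially satisfies the $O(q(\|z\|_{\mathbb{L}_\mu}))$ bound with $q \equiv 0$, so uniform Fr\'echet differentiability comes essentially for free once the target of $\psi_F'$ is confirmed.

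The substantive step is checking that $\psi_F' : \mathbb{L}_\mu \to \mathbb{L}$ is a bounded linear map. Linearity is immediate. For the supremum component, composition with the bijection $m^{-1}$ preserves the sup norm, giving $\|z\circ m^{-1}\|_\infty = \|z\|_\infty$. For the integral component, I would apply the change\hyp{}of\hyp{}variables formula for the Lebesgue\hyp{}Stieltjes measure $dm$: since the image measure of $dm$ under $m$ is Lebesgue measure on $m(\mathbb{R})$, setting $u=m(x)$ gives
\[
	\int_{m(\mathbb{R})} |\tilde z\circ m^{-1}(u)|\,du = \int_{\mathbb{R}} |\tilde z(x)|\,dm(x) = \|z\|_m.
\]
I would then reconcile $\tilde z\circ m^{-1}$ with $\widetilde{z\circ m^{-1}}$, noting that the two centerings differ only on the bounded interval between $0$ and $m(0)$, where the discrepancy is a constant in $z(\pm\infty)$ controlled by $2|m(0)|\|z\|_\infty$. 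Outside $m(\mathbb{R})$, $z\circ m^{-1}$ is constant and equal to a limit of $z$, so the centering in $\widetilde{\cdot}$ kills its contribution to the integral. Combining these pieces yields $\|\psi_F'(z)\|_{\mathbb{L}} \leq (1+2|m(0)|)\|z\|_{\mathbb{L}_\mu}$.

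The only real obstacle is bookkeeping: matching the centering conventions at $x=0$ versus $u=0$, and handling the case when $m$ is not surjective onto $\mathbb{R}$. Both are routine once $m$ is extended continuously to $[-\infty,+\infty]$; the correction constant depends only on $|m(0)|$ and is independent of $F$, so uniformity in $F$ is automatic and the conclusion follows.
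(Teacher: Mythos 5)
Your proof is correct and uses the same key observation as the paper: $\psi$ is affine in $F$, so $\psi(F+z)-\psi(F)-\psi_F'(z)$ vanishes identically. You additionally verify explicitly that $\psi_F'(z)=z\circ m^{-1}$ is a bounded linear map from $\mathbb{L}_\mu$ into $\mathbb{L}$ --- the change of variables $u=m(x)$ on $m(\mathbb{R})$, the observation that $\tilde z\circ m^{-1}$ vanishes off $m(\mathbb{R})$, and the $2|m(0)|\,\|z\|_\infty$ correction for the centering mismatch are all correct --- which is a step the paper's one-line proof leaves implicit but which belongs to the definition being invoked.
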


\begin{proof}[Proof of \cref{fB7dW2UeCqyz3bdNZp}]
Observe that $\psi(F+z) - \psi(F) = (F+z)(m^{-1}) - F(m^{-1}) = z(m^{-1})$.
Therefore, $\psi(F+z)-\psi(F)-\phi_F'(z) = 0$.
\end{proof}

\begin{proof}[Proof of \cref{dqt032FkNegj}]
This follows from \cref{prop:A:F,XCNkYqAFjk2IK6F}.
\end{proof}


\subsection{Convergence of $L$\hyp{}statistics}

\begin{proof}[Proof of \cref{thm:wilcoxon}]
The derivative map is linear by construction; it is also continuous since
\(
	|\lambda_{Q,K}'(z_1,\kappa_1)-\lambda_{Q,K}'(z_2,\kappa_2)|=\bigl|\int_0^1Qd(\kappa_1-\kappa_2)+\int_0^1(z_1-z_2)dK\bigr|
	\leq\|\kappa_1-\kappa_2\|_{Q,\infty}+\|\kappa_1-\kappa_2\|_{Q}+M\|z_1-z_2\|_{\mathbb{B}},
\)
which vanishes as $\|z_1-z_2\|_{\mathbb{B}}\to0$ and $\|\kappa_1-\kappa_2\|_{\mathbb{L}_Q}\to0$.
Let $z_t\to z$ and $\kappa_t\to\kappa$ such that $Q_t\vcentcolon=Q+tz_t$ is in $\mathbb{B}$ and $K_t\vcentcolon=K+t\kappa_t$ is in $\mathbb{L}_{Q,M}$. Observe
\[
	\frac{\lambda(Q_t,K_t)-\lambda(Q,K)}{t} - \lambda'_{Q,K}(z_t,\kappa_t) = \int (z_t-z) d(K_t-K) + \int z d(K_t-K).
\]
The first term vanishes since
\(
	\bigl| \int (z_t-z) d(K_t-K) \bigr| \leq 2M \int |z_t-z| du = 2M \|z_t-z\|_{\mathbb{B}}.
\)
As $z$ is integrable, for every $\varepsilon>0$ there exists a small number $\delta>0$ such that
\(
	\bigl(\int_0^\delta+\int_{1-\delta}^1\bigr) |z| du + \int_\delta^{1-\delta} (|z| - (|z| \wedge \delta^{-1})) du \leq \varepsilon.
\)
This gives
\begin{align*}
	\biggl| \int z d(K_t-K) \biggr| &\leq \biggl| \int_\delta^{1-\delta} (-\delta^{-1} \vee z \wedge \delta^{-1}) d(K_t-K) \biggr| \\
	&\hphantom{={}} + \biggl| \int z d(K_t-K) - \int_\delta^{1-\delta} (-\delta^{-1} \vee z \wedge \delta^{-1}) d(K_t-K) \biggr| \\
	&\leq \biggl| \int_\delta^{1-\delta} (-\delta^{-1} \vee z \wedge \delta^{-1}) d(K_t-K) \biggr| + 2M\varepsilon.
\end{align*}
Let $\tilde{z} \vcentcolon= -\delta^{-1} \vee z \wedge \delta^{-1}$.
Since $\tilde{z}$ is ladcag on $[\delta,1-\delta]$, there exists a partition $\delta=t_0<t_1<\cdots<t_m=1-\delta$ such that $\tilde{z}$ varies less than $\varepsilon$ on each interval $(t_{i-1},t_i]$.
Let $\bar{z}$ be the piecewise constant function that equals $\tilde{z}(t_i)$ on each interval $(t_{i-1},t_i]$. Then
\begin{multline*}
	\biggl| \int_\delta^{1-\delta} \tilde{z} d(K_t-K) \biggr| \leq 2M \sup_{u\in[\delta,1-\delta]}|\tilde{z}-\bar{z}| + |\tilde{z}(\delta)| |(K_t-K)(\{\delta\})| \\
	+ \sum_{i=1}^m |\tilde{z}(t_i)| |(K_t-K)((t_{i-1},t_i])|.
\end{multline*}
The first term is arbitrarily small by the choice of $\varepsilon$, and the second and third terms are collectively bounded by $(2m+1)\delta^{-1}\|K_t-K\|_\infty=(2m+1)\delta^{-1}t\|\kappa_t\|_\infty$, which converges to $0$ regardless of the choice of $K$.

The proof for $\tilde{\lambda}$ is basically the same.
\end{proof}

\begin{proof}[Proof of \cref{EkaRgpsiHieiNgk4NFP}]
Weak convergence follows from \cref{dqt032FkNegj,p34AI8lskauT,thm:wilcoxon}.
The derivative formulas give us
\begin{align*}
	&\Cov(\xi_j, \xi_k) = \int_0^1 \int_0^1 (m_j'\circ Q_j)Q_j'(s)(m_k'\circ Q_k)Q_k'(t) [F_{ik}^Q(s,t)-st] ds dt \\
	&+ \int_0^1 \int_0^1 (m_j'\circ Q_j)Q_j'(s)(m_k'\circ Q_k)Q_k'(t) [K_{jk}F_{jk}^Q(s,t)-stK_j(s)K_k(t)] ds dt \\
	&- \int_0^1 \int_0^1 (m_j'\circ Q_j)Q_j'(s)(m_k'\circ Q_k)Q_k'(t) K_j(s) [F_{jk}^Q(s,t)-st] ds dt \\
	&- \int_0^1 \int_0^1 )m_j'\circ Q_j)Q_j'(s)(m_k'\circ Q_k)Q_k'(t) K_k(t) [F_{jk}^Q(s,t)-st] ds dt.
\end{align*}
Consistency of the sample analogue estimator follows from uniform convergence of $\mathbb{K}_{n,j}^F$ and $\mathbb{K}_{n,k}^F$ and \cref{add:1}.
\end{proof}

\begin{lem} \label{add:1}
Let $m : \mathbb{R} \to \mathbb{R}$ be a ladcag increasing function. For a probability measure $F$ on $\mathbb{R}$ such that $\mathbb{E}[m(X)]<\infty$, $X\sim F$, we have
\begin{gather*}
	\bigl\| m(t) \mathbb{F}_n(t) - m(t) F(t) \bigr\|_\infty \conv^{\oas} 0, \quad
	\biggl\| \int_{[s,t]} |m| d\mathbb{F}_n - \int_{[s,t]} |m| dF \biggr\|_\infty \conv^{\oas} 0, \\
	\biggl\| \int_{[s,t]} |\tilde{\mathbb{F}}_n| dm - \int_{[s,t]} |\tilde{F}| dm \biggr\|_\infty \conv^{\as} 0, \qquad
	\int_{\mathbb{R}} |\mathbb{F}_n - F| dm \conv^{\as} 0,
\end{gather*}
where the suprema are each taken over $t \in \mathbb{R}$, $(s,t) \in \overline{\mathbb{R}}^2$, and $(s,t) \in \overline{\mathbb{R}}^2$.
\end{lem}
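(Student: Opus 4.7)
The plan is to reduce all four claims to two ingredients: the classical Glivenko--Cantelli theorem, which handles the bulk of each quantity on any compact set, and the strong law of large numbers applied to envelopes built from $m$, which controls the tails through the integrability $\mathbb{E}|m(X)|<\infty$ (implied, for increasing $m$, by finiteness of $\mathbb{E}[m(X)]$). For the first claim, I would fix $t_0>0$ so large that $m(t_0)\ge 0$ and $m(-t_0)\le 0$, and split $\sup_t|m(t)||\mathbb{F}_n(t)-F(t)|$ at $|t|=t_0$. The middle piece is at most $(|m(t_0)|\vee|m(-t_0)|)\|\mathbb{F}_n-F\|_\infty\to 0$ outer a.s.\ by Glivenko--Cantelli. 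For $t\ge t_0$ monotonicity gives the envelope bound $m(t)(1-\mathbb{F}_n(t))=n^{-1}\sum_i m(t)\mathbbm{1}\{X_i>t\}\le n^{-1}\sum_i m(X_i)^+\mathbbm{1}\{X_i>t_0\}$, whose right side converges a.s.\ to $\mathbb{E}[m(X)^+\mathbbm{1}\{X>t_0\}]$ by SLLN and is arbitrarily small for $t_0$ large; the identical inequality bounds $m(t)(1-F(t))$, and a symmetric argument using $m(X)^-$ handles $t\le -t_0$. For the second claim, $\int_{[s,t]}|m|\,d(\mathbb{F}_n-F)=n^{-1}\sum_i|m(X_i)|\mathbbm{1}_{[s,t]}(X_i)-\mathbb{E}[|m(X)|\mathbbm{1}_{[s,t]}(X)]$, and since the class $\{|m|\mathbbm{1}_{[s,t]}:s\le t\}$ has integrable envelope $|m|$ and the underlying indicators form a VC class, the standard Glivenko--Cantelli theorem for classes with integrable envelope yields outer a.s.\ convergence uniformly in $(s,t)\in\overline{\mathbb{R}}^2$.

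For the fourth claim, I would split $\int_\mathbb{R}|\mathbb{F}_n-F|\,dm$ at $[-N,N]$. The compact piece is at most $\|\mathbb{F}_n-F\|_\infty(m(N)-m(-N))\to 0$ a.s.\ for each fixed $N$. On the complement, use $|\mathbb{F}_n-F|=|\tilde{\mathbb{F}}_n-\tilde{F}|\le|\tilde{\mathbb{F}}_n|+|\tilde{F}|$; Fubini yields $\int_\mathbb{R}|\tilde{\mathbb{F}}_n|\,dm=n^{-1}\sum_i|m(X_i)-m(0)|$, and restricting the integrand to $\mathbb{R}\setminus[-N,N]$ the same identity holds with an additional indicator, so by SLLN $\int_{\mathbb{R}\setminus[-N,N]}|\tilde{\mathbb{F}}_n|\,dm\to\int_{\mathbb{R}\setminus[-N,N]}|\tilde{F}|\,dm$ a.s.\ for each $N$, the latter being arbitrarily small for $N$ large by dominated convergence. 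Hence $\limsup_n\int_{\mathbb{R}\setminus[-N,N]}|\mathbb{F}_n-F|\,dm\le 2\int_{\mathbb{R}\setminus[-N,N]}|\tilde{F}|\,dm\to 0$ as $N\to\infty$, which completes Claim~4.

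Claim~3 follows at once from Claim~4 via the pointwise inequality $\bigl||\tilde{\mathbb{F}}_n|-|\tilde{F}|\bigr|\le|\tilde{\mathbb{F}}_n-\tilde{F}|=|\mathbb{F}_n-F|$, which yields $\sup_{s,t}\bigl|\int_{[s,t]}|\tilde{\mathbb{F}}_n|\,dm-\int_{[s,t]}|\tilde{F}|\,dm\bigr|\le\int_\mathbb{R}|\mathbb{F}_n-F|\,dm\to 0$ a.s. The main obstacle is conceptually minor but requires care: since $m$ is unbounded, $\|\mathbb{F}_n-F\|_\infty$ alone cannot control any of these integrals, so in each claim the random tail must be matched against its deterministic counterpart through a single application of the SLLN with the right integrable envelope, for which $\mathbb{E}|m(X)|<\infty$ is exactly what is needed.
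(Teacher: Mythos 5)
Your proof is correct, but it takes a genuinely different route from the paper's, mainly in how the four claims are ordered and linked. The paper establishes claims~1 and~2 by verifying finite $L_1$-bracketing numbers for the two classes $\{m(t)\mathbbm{1}\{\cdot\le t\}\}$ and $\{|m|\mathbbm{1}_{[s,t]}\}$ and invoking the bracketing Glivenko--Cantelli theorem, then obtains claim~3 from claims~1 and~2 via the integration-by-parts identity $\int_{[s,t]}|m|\,d\mathbb{F}_n=[|m|\tilde{\mathbb{F}}_n]_s^t+\int_{[s,t]}|\tilde{\mathbb{F}}_n|\,dm$ and the triangle inequality, and finally derives claim~4 from claim~3 by using the uniform tail control it provides plus $\|\mathbb{F}_n-F\|_\infty\to 0$ on a compact. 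You instead prove claim~1 by a direct envelope bound controlled by the SLLN (more elementary than bracketing), claim~2 by the VC/uniform-entropy Glivenko--Cantelli theorem with envelope $|m|$ (a different GC theorem but an equally standard one, given that multiplication by a fixed function preserves $L_1$-covering bounds relative to the appropriate envelope), and then you \emph{reverse} the logical order of claims~3 and~4: you obtain claim~4 first by a compact-tail split with Fubini and SLLN on the tails, and then get claim~3 as an immediate corollary via $\bigl||\tilde{\mathbb{F}}_n|-|\tilde F|\bigr|\le|\mathbb{F}_n-F|$. The reversed dependency makes your derivation of claim~3 cleaner — it is a one-line consequence of claim~4 with no integration by parts — while the paper's order has the advantage of letting claims~3 and~4 follow rather mechanically once the bracketing machinery for claims~1 and~2 is set up. Both are complete; the minor care points you should keep are (i) the measurability remarks (the paper flags claims 1--2 as ``outer'' and explains why the sup in claim~3 is measurable), which your envelope and $L_1$-domination arguments in fact handle automatically since your dominating quantities are measurable, and (ii) in claim~4, letting $N\to\infty$ through a countable set so the a.s.\ exception sets can be unioned.
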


\begin{proof}[Proof of \cref{add:1}]
We assume $m(0)=0$ without loss of generality.
In view of \cite[Theorem 2.4.1]{vw1996}, the first two claims follow if
\begin{gather*}
	\mathcal{F} = \bigl\{ f_t : \mathbb{R} \to \mathbb{R} : t \in \overline{\mathbb{R}}, \, f_t(x) = m(t) \mathbbm{1}\{x \leq t\} \bigr\}, \\
	\mathcal{G} = \bigl\{ g_{s,t} : \mathbb{R} \to \mathbb{R} : s, t \in \overline{\mathbb{R}}, \, g_{s,t}(x) = |m(x)| \mathbbm{1}\{s \leq x \leq t\} \bigr\}
\end{gather*}
have finite bracketing numbers with respect to $L_1(P)$. 
For $\mathcal{F}$ take $-\infty = t_0 < t_1 < \cdots < t_m = \infty$ such that $|\int (f_{t_{i+1}} - f_{t_i}) dF| < \varepsilon$ for each $i$ and consider the brackets $\{f_{t_i}\}$.%
\footnote{If $F$ has a probability mass at $t$, then for small $\varepsilon$ take, instead of $f_t$, $\tilde{f}_{t,c}(x) = m(t) [c \mathbbm{1}\{x \leq t\} + (1-c) \mathbbm{1}\{x < t\}]$ for appropriately chosen $c$.}
This partition is finite by \cref{lem:t97410h} and $\mathbb{E}[m(X)]<\infty$.
For $\mathcal{G}$ take $-\infty = t_0 < t_1 < \cdots < t_m = \infty$ such that $\bigl| \int_{(-\infty,t_{i+1}]} |m| dF - \int_{(-\infty,t_i]} |m| dF \bigr| < \varepsilon$ for each $i$ and consider the brackets $\{g_{s,t}\}$ for every pair $s, t \in \{t_0, \dots, t_m\}$.%
\footnote{Again, if $F$ has a mass, similar adjustments are needed.}
This partition is finite by $\mathbb{E}[m(X)]<\infty$.

For the third claim, observe that
\(
	\int_{[s,t]} |m| d\mathbb{F}_n = \int_{[s,t]} |m| d\tilde{\mathbb{F}}_n
	= \bigl[ |m| \tilde{\mathbb{F}}_n \bigr]_s^t + \int_{[s,t]} |\tilde{\mathbb{F}}_n| d\mu.
\)
Then the claim follows by the first two claims and the triangle inequality,
\(
	\bigl\| \int_{[s,t]} |\tilde{\mathbb{F}}_n| d\mu - \int_{[s,t]} |\tilde{F}| d\mu \bigr\|_\infty
	\leq 2 \bigl\| m(t) \mathbb{F}_n(t) - m(t) F(t) \bigr\|_\infty
	+ \bigl\| \int_{[s,t]} |m| d\mathbb{F}_n - \int_{[s,t]} |m| dF \bigr\|_\infty.
\)%
\footnote{Measurability of the sup on the LHS follows by the continuity of Lebesgue integrals.}

For the last claim, observe that \cref{lem:t97410h} and the preceding claim imply that for $\varepsilon > 0$ there exists $M < \infty$ such that
\(
	\bigl( \int_{(-\infty,-M]} + \int_{[M,\infty)} \bigr) |\tilde{\mathbb{F}}_n| d\mu + \bigl( \int_{(-\infty,-M]} + \int_{[M,\infty)} \bigr) |\tilde{F}| d\mu < \varepsilon
\)
with probability tending to $1$. By the triangle inequality,
\(
	\int_{\mathbb{R}} \bigl| \tilde{\mathbb{F}}_n - \tilde{F} \bigr| d\mu \leq \int_{(-M,M)} \bigl| \tilde{\mathbb{F}}_n - \tilde{F} \bigr| d\mu + \varepsilon
	\leq \|\mathbb{F}_n - F\|_\infty \mu((-M,M)) + \varepsilon.
\)
Then the assertion follows by the Glivenko\hyp{}Cantelli theorem.
\end{proof}


\subsection{Validity of Nonparametric Bootstrap} \label{sec:proofs:bootstrap}

We start with the key lemma in Poissonization, the counterpart of \cite[Lemma 3.6.16]{vw1996}.

\begin{lem} \label{lem:A:Poissonization}
For each $n$, let $(W_{n1},\dots,W_{nn})$ be an exchangeable nonnegative random vector independent of $X_1,X_2,\dots$ such that $\sum_{i=1}^nW_{ni}=1$ and $\max_{1\leq i\leq n}|W_{ni}|$ converges to zero in probability.
Let $F$ be a probability distribution on $\mathbb{R}$ such that $\int_{\mathbb{R}}\sqrt{F(1-F)}|d\mu|<\infty$.
Then, for every $\varepsilon>0$, as $n\to\infty$,
\(
	{\Pr}_W\bigl(\bigl\|\sum_{i=1}^nW_{ni}\bigl(\mathbbm{1}\{X_i\leq x\}-F(x)\bigr)\bigr\|_\mu^\ast>\varepsilon\bigr)\conv^{\oas}0.
\)
\end{lem}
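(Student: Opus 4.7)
The plan is to apply Markov's inequality to the conditional first moment $\mathbb{E}_W\|\sum_i W_{ni}Z_i\|_\mu$, where $Z_i(x)\vcentcolon=\mathbbm{1}\{X_i\leq x\}-F(x)$. By Tonelli and Jensen this is bounded by $\int\sqrt{\mathbb{E}_W[\sum_i W_{ni}Z_i(x)]^2}\,|d\mu|(x)$, so the task reduces to controlling the inner quadratic moment pointwise in $x$ and then integrating against $|d\mu|$.

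Exchangeability of $W$ together with the constraint $\sum_i W_{ni}=1$ is the structural input. Writing $a_n\vcentcolon=\mathbb{E}[W_{n1}^2]$ and $b_n\vcentcolon=\mathbb{E}[W_{n1}W_{n2}]$ so that $n a_n + n(n-1) b_n = \mathbb{E}[(\sum_i W_{ni})^2] = 1$, direct expansion together with the deterministic identities $\sum_i Z_i(x)=n(\mathbb{F}_n-F)(x)$ and $\sum_i Z_i(x)^2 = n[\mathbb{F}_n(1-\mathbb{F}_n)+(\mathbb{F}_n-F)^2](x)$ collapses the inner quadratic moment to the clean form
\[
\mathbb{E}_W\bigl[\textstyle\sum_i W_{ni}Z_i(x)\bigr]^2 = n(a_n-b_n)\mathbb{F}_n(x)(1-\mathbb{F}_n(x)) + (\mathbb{F}_n(x)-F(x))^2.
\]
I expect this algebraic collapse to be the principal obstacle; it works precisely because $\sum_i W_{ni}=1$ forces the combination $n a_n + n^2 b_n - n b_n$ to equal $1$, eliminating the coefficient that would otherwise multiply $(\mathbb{F}_n-F)^2$ by a large factor.

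The remaining pieces are routine. Since $W_{ni}\geq 0$ and $\sum_i W_{ni}=1$, one has $\sum_i W_{ni}^2\leq\max_i W_{ni}\leq 1$, so the hypothesis $\max_i W_{ni}\to 0$ in probability together with bounded convergence gives $n a_n = \mathbb{E}[\sum_i W_{ni}^2]\to 0$, whence $n b_n = (1-n a_n)/(n-1)\to 0$ and $n(a_n-b_n)\to 0$. Applying $\sqrt{a+b}\leq\sqrt{a}+\sqrt{b}$ bounds the integral by
\[
\sqrt{n(a_n-b_n)}\int\sqrt{\mathbb{F}_n(1-\mathbb{F}_n)}\,|d\mu| + \int|\mathbb{F}_n-F|\,|d\mu|.
\]
The first integral has expectation at most $\int\sqrt{F(1-F)}|d\mu|<\infty$ by Jensen (using $\mathbb{E}[\mathbb{F}_n(1-\mathbb{F}_n)]=(1-1/n)F(1-F)$), so the first summand vanishes owing to its deterministic prefactor; the second summand has expectation at most $n^{-1/2}\int\sqrt{F(1-F)}|d\mu|\to 0$ via $\mathbb{E}|\mathbb{F}_n-F|\leq\sqrt{F(1-F)/n}$. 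Together these deliver $\mathbb{E}_W\|\sum_i W_{ni}Z_i\|_\mu\to 0$ in outer probability over $X$, and a standard subsequence argument dominated by the integrable envelope $\sqrt{F(1-F)}$ upgrades this to outer almost sure convergence. Markov's inequality on $\Pr_W$ then concludes.
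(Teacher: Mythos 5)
Your approach is genuinely different from the paper's. The paper invokes the Van der Vaart--Wellner symmetrization machinery (Lemmas 3.6.7 and 3.6.16), noting that the only step requiring modification is the almost-sure boundedness of $n^{-1}\sum_i\|\mathbbm{1}\{X_i\leq\cdot\}-F\|_\mu^r$ for $r<1$, which it verifies via the SLLN. You instead give a self-contained direct argument: bound $\mathbb{E}_W\|\sum_i W_{ni}Z_i\|_\mu$ by $\int\sqrt{\mathbb{E}_W[\sum_i W_{ni}Z_i(x)]^2}\,|d\mu|$ and exploit exchangeability together with the constraint $\sum_iW_{ni}=1$ to collapse the conditional variance. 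Your algebraic reduction
\[
\mathbb{E}_W\Bigl[\sum_i W_{ni}Z_i(x)\Bigr]^2 = n(a_n-b_n)\mathbb{F}_n(1-\mathbb{F}_n)(x) + (\mathbb{F}_n-F)^2(x)
\]
is correct (and one does have $a_n\geq b_n$ since $2(a_n-b_n)=\mathbb{E}[(W_{n1}-W_{n2})^2]$), and your deduction that $na_n\to0$ from $\sum_iW_{ni}^2\leq\max_iW_{ni}\to0$ with bounded convergence is clean. This buys a more elementary path to the result that avoids bracketing/symmetrization altogether and makes the role of exchangeability transparent.

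There is, however, a real gap in the final step where you claim to upgrade to outer almost-sure convergence. Your bound is
\[
\mathbb{E}_W\Bigl\|\sum_i W_{ni}Z_i\Bigr\|_\mu \leq \sqrt{n(a_n-b_n)}\int\sqrt{\mathbb{F}_n(1-\mathbb{F}_n)}\,|d\mu| + \int|\mathbb{F}_n-F|\,|d\mu|,
\]
and the lemma asserts convergence to zero \emph{almost surely} in $\{X_i\}$, not merely in probability. The second term does go to zero almost surely (this is essentially the last claim of \cref{add:1}). For the first term, however, you only establish that $\int\sqrt{\mathbb{F}_n(1-\mathbb{F}_n)}\,|d\mu|$ has bounded expectation via Jensen; that controls it as $O_P(1)$, not $O(1)$ almost surely. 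The phrase ``dominated by the integrable envelope $\sqrt{F(1-F)}$'' does not describe what happens: $\sqrt{\mathbb{F}_n(1-\mathbb{F}_n)}$ is \emph{not} pointwise dominated by $\sqrt{F(1-F)}$ (it exceeds it on a set of positive $\mu$-measure for every $n$), and a ``standard subsequence argument'' can extract an almost-surely convergent subsequence from convergence in probability but cannot deliver convergence of the full sequence. Since the rate at which $\sqrt{n(a_n-b_n)}\to0$ is unspecified, you genuinely need $\sup_n\int\sqrt{\mathbb{F}_n(1-\mathbb{F}_n)}\,|d\mu|<\infty$ almost surely. This can in fact be obtained, for instance by Cauchy--Schwarz
\[
\int\sqrt{\mathbb{F}_n(1-\mathbb{F}_n)}\,|d\mu| \leq \Bigl(\int\sqrt{F(1-F)}\,|d\mu|\Bigr)^{1/2}\Bigl(\int\frac{\mathbb{F}_n(1-\mathbb{F}_n)}{\sqrt{F(1-F)}}\,|d\mu|\Bigr)^{1/2},
\]
where the second factor is a V-statistic with kernel $h(y,z)=\int_{[y\wedge z, y\vee z)}\sqrt{F(1-F)}^{-1}\,|d\mu|$ whose mean $\int\sqrt{F(1-F)}\,|d\mu|$ is finite, so the SLLN for U-statistics gives almost-sure convergence. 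But that is precisely the kind of SLLN ingredient the paper's proof supplies explicitly (the $n^{-1}\sum\|Z_i\|_\mu^r$ bound), and it is missing from your write-up. As written, your argument establishes the in-probability version of the lemma but not the almost-sure version stated.
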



\begin{proof}[Proof of \cref{lem:A:Poissonization}]
Assume without loss of generality that $\mu$ is a positive measure and let $m(x)\vcentcolon=\mu([0,x))$ for $x\geq0$ and $\mu([x,0))$ for $x<0$.
Since \cite[Lemma 3.6.7]{vw1996} goes through with $\|\cdot\|_{\mathbb{L}_\mu}$, the proof of this lemma is almost identical to \cite[Lemma 3.6.16]{vw1996}.
Essentially, the only part that requires modification is boundedness of $n^{-1}\sum_{i=1}^n\|\mathbbm{1}\{X_i\leq x\}-F(x)\|_\mu^r$ ($r<1$).
Note that
\(
	|\mathbbm{1}\{X_i\leq x\}-F(x)|\leq|\mathbbm{1}\{X_i\leq x\}-\mathbbm{1}\{0\leq x\}|+|\tilde{F}(x)|.
\)
Therefore,
\(
	\|\mathbbm{1}\{X_i\leq x\}-F(x)\|_\mu 
	\leq m(X_i)+\|\tilde{F}\|_\mu.
\)
Find that
\(
	\frac{1}{n}\sum_{i=1}^n\|\mathbbm{1}\{X_i\leq x\}-F(x)\|_\mu^r\leq\frac{1}{n}\sum_{i=1}^n m(X_i)^r+\|\tilde{F}\|_\mu^r,
\)
which converges almost surely to $\mathbb{E}[m(X_i)^r]+\|\tilde{F}\|_\mu^r<\infty$.
\end{proof}

Given this, we infer as in \cite[Theorem 3.6.1]{vw1996} that conditional weak convergence of $\hat{\mathbb{Z}}_n$ follows from conditional weak convergence of $\mathbb{Z}_n'$.
For the latter, we first need to show unconditional convergence of $\mathbb{Z}_n'$ in our norm.
The following is a modification of \cite[Theorem 2.9.2]{vw1996}.

\begin{lem} \label{lem:multiplier:uncond}
Let $\xi_1,\dots,\xi_n$ be i.i.d.\ random variables with mean zero, variance $1$, and $\|\xi\|_{2,1}<\infty$, independent of $X_1,\dots,X_n$.
For a probability distribution $F$ on $\mathbb{R}$ such that $m(X)$ has a $(2+c)$th moment for $X\sim F$ and some $c>0$, the process $\mathbb{Z}_n'(x)\vcentcolon=n^{-1/2}\sum_{i=1}^n\xi_i[\mathbbm{1}\{X_i\leq x\}-F(x)]$ converges weakly to a tight limit process in $\mathbb{L}_\mu$ if and only if $\mathbb{Z}_n\vcentcolon=n^{-1/2}\sum_{i=1}^n[\mathbbm{1}\{X_i\leq x\}-F(x)]$ does.
In that case, they share the same limit processes.
\end{lem}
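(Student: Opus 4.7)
The plan is to mimic the proof of \cite[Theorem 2.9.2]{vw1996} but in the norm $\|\cdot\|_{\mathbb{L}_\mu}$, using the characterization of weak convergence in $\mathbb{L}_\mu$ developed in \cref{FfdAo8VYYjsJz,dJMhMPoGhEIJvZ}. By \cref{FfdAo8VYYjsJz}, weak convergence of either process is equivalent to asymptotic tightness plus weak convergence of the marginals. The marginal convergence is elementary in both directions: for any finite set $x_1,\dots,x_k$, both vectors
\(
(\mathbb{Z}_n(x_j))_{j=1}^k
\)
and
\(
(\mathbb{Z}_n'(x_j))_{j=1}^k
\)
are normalized sums of i.i.d.\ mean-zero random vectors with identical covariance (since $\Var(\xi_i)=1$ and $\xi_i$ is independent of $X_i$); the classical multivariate CLT applies, giving the same Gaussian limit as in \cref{prop:A:F}. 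Hence the whole argument reduces to establishing that asymptotic tightness of $\mathbb{Z}_n$ in $\mathbb{L}_\mu$ is equivalent to that of $\mathbb{Z}_n'$.

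For the equivalence of asymptotic tightness, I would invoke symmetrization. Introduce independent Rademacher signs $\varepsilon_1,\dots,\varepsilon_n$ independent of $(X_i,\xi_i)$ and let
\(
\mathbb{Z}_n^0(x)\vcentcolon=n^{-1/2}\sum_{i=1}^n\varepsilon_i(\mathbbm{1}\{X_i\leq x\}-F(x)).
\)
The norm $\|\cdot\|_{\mathbb{L}_\mu}$ is the maximum of two seminorms each of which is a supremum of linear functionals (on $\mathbb{R}$ and on $T$, respectively) of absolute values, so it is convex and subadditive. Consequently the standard symmetrization inequalities \cite[Lemma 2.3.6]{vw1996} carry over verbatim to $\|\cdot\|_{\mathbb{L}_\mu}$, yielding
\(
\mathbb{E}^\ast\|\mathbb{Z}_n\|_{\mathbb{L}_\mu}\leq 2\mathbb{E}^\ast\|\mathbb{Z}_n^0\|_{\mathbb{L}_\mu}
\)
and, by the multiplier inequality \cite[Lemma 2.9.1]{vw1996} combined with $\|\xi\|_{2,1}<\infty$, constants $C_1,C_2$ with
\(
\mathbb{E}^\ast\|\mathbb{Z}_n^0\|_{\mathbb{L}_\mu}\leq C_1\mathbb{E}^\ast\|\mathbb{Z}_n'\|_{\mathbb{L}_\mu}
\)
and the reverse bound $\mathbb{E}^\ast\|\mathbb{Z}_n'\|_{\mathbb{L}_\mu}\leq C_2\mathbb{E}^\ast\|\mathbb{Z}_n^0\|_{\mathbb{L}_\mu}$, provided $\max_k\mathbb{E}^\ast\|\mathbb{Z}_k^0\|_{\mathbb{L}_\mu}/\sqrt{k}$ stays bounded.

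The last boundedness, together with the analogous equicontinuity/equiintegrability statement needed for asymptotic tightness via \cref{dJMhMPoGhEIJvZ}(\ref{thmA6:3}), is where the main work lies. For the sup-norm component $\|\cdot\|_\infty$, this is standard and inherited from \cite[Theorem 2.9.2]{vw1996}. For the integral component $\|\cdot\|_\mu$, the key inequality is
\(
\|\mathbbm{1}\{X_i\leq\cdot\}-F(\cdot)\|_\mu\leq|m(X_i)-m(0)|+\|\tilde{F}\|_\mu,
\)
derived in the proof of \cref{lem:A:Poissonization}, where $m(x)\vcentcolon=\int_{(-\infty,x]}|d\mu|$. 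Under the $(2+c)$th-moment assumption on $m(X)$, Hoffmann-J\o rgensen's inequality applied to the symmetrized sum in $(\mathbb{L}_\mu,\|\cdot\|_{\mathbb{L}_\mu})$ transfers the moment bound from the summand envelope to $\mathbb{E}^\ast\|\mathbb{Z}_n^0\|_{\mathbb{L}_\mu}^2$, giving the required control. Conditional on $X_1,\dots,X_n$, $\mathbb{Z}_n^0$ and $\mathbb{Z}_n'$ have the same type, so by a conditioning argument tightness of one transfers to the other, and combining with the equicontinuity part of \cref{dJMhMPoGhEIJvZ} closes the circle.

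The hard part will be verifying that the partition/equiintegrability condition of \cref{dJMhMPoGhEIJvZ}(\ref{thmA6:2}) for $\mathbb{Z}_n$ transfers to $\mathbb{Z}_n'$ uniformly in $n$, because on the integral side the Rademacher contraction principle does not directly give termwise control over an $L_1$ norm with respect to $\mu$. My route around this is to freeze a finite $\mu$-measurable partition $T=\bigcup_{i=1}^kT_i$ witnessing tightness of $\mathbb{Z}_n$, apply the multiplier inequality separately on each $T_i$ to compare $\inf_x\int_{T_i}|\mathbb{Z}_n-x||d\mu|$ with $\inf_x\int_{T_i}|\mathbb{Z}_n'-x||d\mu|$, and then sum; the $\|\xi\|_{2,1}<\infty$ condition is precisely what is needed to perform this transfer without losing a factor depending on $k$. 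Once this is done the proof of weak convergence of $\mathbb{Z}_n'$ reduces to \cref{FfdAo8VYYjsJz} with limit identified by the marginal CLT, and symmetry of the argument delivers the converse.
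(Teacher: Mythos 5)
Your proposal is correct and follows essentially the same route as the paper's proof, which likewise reduces the matter to showing that the symmetrization and multiplier machinery of \cite[Lemmas 2.3.1, 2.3.6, 2.9.1 and Propositions A.1.4, A.1.5]{vw1996} carry over from the uniform norm to $\|\cdot\|_{\mathbb{L}_\mu}$ so that the proof of \cite[Theorem 2.9.2]{vw1996} can be rewritten in the new norm. The paper states this transfer tersely as a meta-observation, whereas you spell out the intermediate moves (Rademacher symmetrization, the envelope bound from \cref{lem:A:Poissonization}, Hoffmann--J\o rgensen, the partitionwise multiplier comparison), but the underlying architecture and the key lemmas invoked are identical.
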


\begin{proof}[Proof of \cref{lem:multiplier:uncond}]
Marginal convergence and asymptotic equicontinuity of $\mathbb{Z}_n'$ follow from \cref{prop:A:F} and \cite[Theorem 2.9.2]{vw1996}.
It remains to show the equivalence of asymptotic equiintegrability of $\mathbb{Z}_n'$ and $\mathbb{Z}_n$.

Note that the proofs of \cite[Lemmas 2.3.1, 2.3.6, and 2.9.1 and Propositions A.1.4 and A.1.5]{vw1996} do not depend on the specificity of the norm $\|\cdot\|_{\mathcal{F}}$, but they continue to hold with $\|\cdot\|_{\mathbb{L}_\mu}$.
Given this, \cite[Lemma 2.3.11]{vw1996} also holds with $\|\cdot\|_{\mathbb{L}_\mu}$ (and $\|\cdot\|_{\mathbb{L}_{\mu,\delta_n}}$).
Finally, rewriting the proof of \cite[Theorem 2.9.2]{vw1996} in terms of $\|\cdot\|_{\mathbb{L}_\mu}$ yields the proof of this lemma.
\end{proof}

\begin{proof}[Proof of \cref{lem:multiplier:cond}]
By \cref{lem:multiplier:uncond}, $\mathbb{Z}_n'$ is asymptotically measurable.
Define a semimetric on $\mathbb{R}$ by
\(
	\rho(s,t)\vcentcolon=|F(s)-F(t)|\vee\int_s^t\sqrt{F(1-F)}|d\mu|.
\)
For $\delta>0$, $t_1<\cdots<t_p$ be such that $\rho(-\infty,t_1)\leq\delta$, $\rho(t_j,t_{j+1})\leq\delta$, and $\rho(t_p,\infty)\leq\delta$.
Define $\mathbb{Z}_\delta$ by
\[
	\mathbb{Z}_\delta(x)\vcentcolon=\begin{cases} 0 & x<t_1\text{ or }x\geq t_p, \\ \mathbb{Z}(t_i) & t_i\leq x\leq t_{i+1}, \, i=1,\dots,p-1. \end{cases}
\]
Define $\mathbb{Z}_{n,\delta}'$ analogously.
By the continuity and integrability of the limit process $\mathbb{Z}$, we have $\mathbb{Z}_\delta\to\mathbb{Z}$ in $\mathbb{L}_\mu$ almost surely as $\delta\to0$.
Therefore,
\(
	\sup_{h\in\text{BL}_1(\mathbb{L}_\mu)}\bigl|\mathbb{E}h(\mathbb{Z}_\delta)-\mathbb{E}h(\mathbb{Z})\bigr|\conv0$ as $\delta\to0.
\)
Second, by \cite[Lemma 2.9.5]{vw1996},
\(
	\sup_{h\in\text{BL}_1(\mathbb{L}_\mu)}\bigl|\mathbb{E}_\xi h(\mathbb{Z}_{n,\delta}')-\mathbb{E}h(\mathbb{Z}_\delta)\bigr|\conv0$ as $n\to\infty
\)
for almost every sequence $X_1,X_2,\dots$ and fixed $\delta>0$.
Since $\mathbb{Z}_\delta$ and $\mathbb{Z}_{n,\delta}'$ take only on a finite number of values and their tail values are zero, one can replace the supremum over $\text{BL}_1(\mathbb{L}_\mu)$ with a supremum over $\text{BL}_1(\mathbb{R}^p)$.
Observe that $\text{BL}_1(\mathbb{R}^p)$ is separable with respect to the topology of uniform convergence on compact sets; this supremum is effectively over a countable set, hence measurable.
Third,
\(
	\sup_{h\in\text{BL}_1(\mathbb{L}_\mu)}\bigl|\mathbb{E}_\xi h(\mathbb{Z}_{n,\delta}')-\mathbb{E}_\xi h(\mathbb{Z}_n')\bigr| \leq \sup_{h\in\text{BL}_1(\mathbb{L}_\mu)}\mathbb{E}_\xi\bigl|h(\mathbb{Z}_{n,\delta}')-h(\mathbb{Z}_n')\bigr|
	\leq \mathbb{E}_\xi\|\mathbb{Z}_{n,\delta}'-\mathbb{Z}_n'\|_{\mathbb{L}_\mu}^\ast \leq \mathbb{E}_\xi\|\mathbb{Z}_n'\|_{\mathbb{L}_{\mu,\delta}^\ast}.
\)
This implies that its outer expectation is bounded by $\mathbb{E}^\ast\|\mathbb{Z}_n'\|_{\mathbb{L}_{\mu,\delta}}$, which vanishes as $n\to\infty$ by the modified \cite[Lemma 2.9.1]{vw1996} as discussed in \cref{lem:multiplier:uncond}.
\end{proof}

\begin{proof}[Proof of \cref{lem:A:bootK}]
Noting $\mathbb{G}_n'(u)=0\vee[\mathbb{F}_n'(u)-\mathbb{F}_n'\circ\mathbb{F}_n'^{-1}(\alpha)]$, weak convergence of $\sqrt{n}(\mathbb{F}_n'-F)$ and $\sqrt{n}(\mathbb{G}_n'-G)$ follows from \cref{lem:multiplier:cond} (or \cite[Theorem 2.9.6]{vw1996}) and \cref{bZivHihLTt5o}.
\end{proof}

\begin{proof}[Proof of \cref{prop:A:bootstrap}]
With the remark below \cref{lem:A:Poissonization}, the proposition follows from \cref{lem:multiplier:cond,lem:A:bootK} and \cite[Theorem 3.9.11]{vw1996}.
\end{proof}


\appendix

\section*{Appendix}\label{app}


\subsection{Supporting Lemmas} \label{suppA}

\begin{lem} \label{lem:t97410h}
Let $F$ be a probability distribution on $\mathbb{R}$ and write $\tilde{F}(x) \vcentcolon= F(x) - \mathbbm{1}\{x \geq 0\}$.
For $p > 0$ we have (\ref{lemA1:1}) $\Leftrightarrow$ (\ref{lemA1:2}) $\Leftrightarrow$ (\ref{lemA1:3}) $\Rightarrow$ (\ref{lemA1:4}) $\Leftrightarrow$ (\ref{lemA1:5}), where
\begin{enumerate}[i.]
	\item \label{lemA1:1} $F$ has a $p$th moment;
	\item \label{lemA1:2} $Q$ is in $L_p(0,1)$;
	\item \label{lemA1:3} $|x|^{p-1} \tilde{F}$ is integrable;
	\item \label{lemA1:4} $|x|^p \tilde{F}$ converges to $0$ as $x \to \pm \infty$;
	\item \label{lemA1:5} $u^{1/p} (1-u)^{1/p} Q$ converges to $0$ as $u \to \{0,1\}$.
\end{enumerate}
\end{lem}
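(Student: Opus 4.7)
The plan is to prove the chain (ii) $\Leftrightarrow$ (i) $\Leftrightarrow$ (iii) $\Rightarrow$ (iv) $\Leftrightarrow$ (v) using three standard tools: the quantile transformation, Fubini/integration by parts, and tail monotonicity of $\tilde{F}$.

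First, (i) $\Leftrightarrow$ (ii) follows from the quantile transformation: $Q(U)\sim F$ when $U$ is uniform on $(0,1)$, so $\mathbb{E}|X|^p=\int_0^1|Q(u)|^p\,du$, and the two sides are simultaneously finite. For (i) $\Leftrightarrow$ (iii), I would apply Fubini to the layer-cake identity $|X|^p=p\int_0^{|X|}t^{p-1}\,dt$ to obtain $\mathbb{E}|X|^p=p\int_0^\infty t^{p-1}\Pr(|X|>t)\,dt$. Splitting this integral into the positive and negative tails of $X$ and rewriting $\Pr(X>t)=1-F(t)=|\tilde F(t)|$ for $t>0$ and $\Pr(X\le -t)=F(-t)=|\tilde F(-t)|$ for $t>0$ yields $\mathbb{E}|X|^p=p\int_{\mathbb{R}}|x|^{p-1}|\tilde F(x)|\,dx$, giving the equivalence.

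For (iii) $\Rightarrow$ (iv), I would exploit the monotonicity of $|\tilde F|$ in $|x|$: for large $x>0$,
\[
	\int_{x/2}^{x} t^{p-1}(1-F(t))\,dt \;\geq\; (1-F(x))\int_{x/2}^{x} t^{p-1}\,dt \;=\; c_p\,x^p(1-F(x)),
\]
and the left-hand side is the tail of an integrable function, hence vanishes as $x\to\infty$. Thus $x^p(1-F(x))\to 0$; the case $x\to-\infty$ is symmetric.

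Finally, for (iv) $\Leftrightarrow$ (v), I would translate between $x$ and $u$ via $x=Q(u)$. If the right endpoint $R=\sup\{x:F(x)<1\}$ is finite, both statements at the right end become trivial ($Q$ bounded and $1-F$ eventually zero); if $R=\infty$, the sandwich $F(Q(u)-)\le u\le F(Q(u))$ combined with the monotonicity of $1-F$ transfers the limit in either direction between $|x|^p|\tilde F(x)|$ and $(1-u)|Q(u)|^p$. A symmetric argument treats the left end. The main obstacle I anticipate is the bookkeeping at atoms of $F$ (where $F(Q(u))\neq u$) and at plateaus of $F$ (where $Q$ jumps); but these do not affect limits near $\{0,1\}$ in a controllable way, so they can be dispatched by routine case analysis.
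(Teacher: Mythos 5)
Your proposal is correct, and it takes a genuinely different route through the chain than the paper's proof. For (i) $\Leftrightarrow$ (iii), the paper integrates by parts to get $\int |x|^p\,dF = \bigl[|x|^p\tilde{F}\bigr]_{-\infty}^{\infty} + p\int |x|^{p-1}|\tilde{F}|\,dx$; to justify discarding the boundary term it must first prove (i) $\Rightarrow$ (iv) and (iii) $\Rightarrow$ (iv) separately, so (iv) is load-bearing scaffolding for the equivalence. Your layer-cake identity $\mathbb{E}|X|^p = p\int_0^\infty t^{p-1}\Pr(|X|>t)\,dt = p\int_{\mathbb{R}}|x|^{p-1}|\tilde{F}(x)|\,dx$ sidesteps the boundary term entirely and gives (i) $\Leftrightarrow$ (iii) in one stroke, which is a cleaner logical structure (you only need (iii) $\Rightarrow$ (iv) once, as a terminal implication, rather than as a prerequisite). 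For (iii) $\Rightarrow$ (iv) itself, the paper argues by contradiction, extracting a subsequence $x_i$ with $|x_i|^p F(x_{i+1}) \leq 2^{-i}$ and telescoping to make $\int |x|^{p-1}F\,dx$ diverge; your bound $\int_{x/2}^{x}t^{p-1}(1-F(t))\,dt \geq \tfrac{1-2^{-p}}{p}\,x^p(1-F(x))$ combined with vanishing tails of an integrable function is more direct and arguably more transparent. For (iv) $\Leftrightarrow$ (v) you correctly flag the need to treat finite-support right endpoints and to use the sandwich $F(Q(u)-) \leq u \leq F(Q(u))$ in place of the naive substitution $u=F(x)$; the paper is terse here (``Let $u=F(x)$''), and your version makes explicit what the change of variables implicitly requires. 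Net effect: your argument is a bit longer in exposition but has a simpler dependency graph (no detour through (iv) to establish (i) $\Leftrightarrow$ (iii)) and is somewhat more careful at the (iv) $\Leftrightarrow$ (v) step.
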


\begin{proof}
We proceed as follows: (\ref{lemA1:1}) $\Rightarrow$ (\ref{lemA1:4}), (\ref{lemA1:3}) $\Rightarrow$ (\ref{lemA1:4}), (\ref{lemA1:1}) $\Leftrightarrow$ (\ref{lemA1:3}), (\ref{lemA1:1}) $\Leftrightarrow$ (\ref{lemA1:2}), and (\ref{lemA1:4}) $\Leftrightarrow$ (\ref{lemA1:5}).

(\ref{lemA1:1}) $\Rightarrow$ (\ref{lemA1:4}).
For $M > 0$,
\(
	\int_{\mathbb{R}} |x|^p dF \geq \int_{[-M,M]} |x|^p dF + M^p |\tilde{F}(-M)| + M^p |\tilde{F}(M)|.
\)
Since the left\hyp{}hand side (LHS) is finite, one may take $M$ large enough that
\(
	\int_{\mathbb{R}} |x|^p dF - \int_{[-M,M]} |x|^p dF
\)
is arbitrarily small, which then bounds the two nonnegative terms. Hence $|x|^p \tilde{F}(x) \to 0$ as $x \to \pm\infty$.

(\ref{lemA1:3}) $\Rightarrow$ (\ref{lemA1:4}).
Suppose that $|x|^{p-1} |\tilde{F}|$ is integrable but $|x|^p F$ does not vanish as $x \to -\infty$, that is, there exist a constant $c > 0$ and a sequence $0 > x_1 > x_2 > \cdots \to -\infty$ such that $|x_i|^p F(x_i) \geq c$.
Since $F \to 0$, one may take a subsequence such that
\(
	|x_i|^p F(x_{i+1}) \leq 2^{-i}.
\)
By monotonicity of $F$,
\(
	p \int_{-\infty}^0 |x|^{p-1} F(x) dx \geq F(x_1) \int_{x_1}^0 p |x|^{p-1} dx + F(x_2) \int_{x_2}^{x_1} p |x|^{p-1} dx + \cdots
	= |x_1|^p F(x_1) + ( |x_2|^p - |x_1|^p ) F(x_2) + 
	\cdots
	\geq c + \sum_{i=1}^\infty (c - 2^{-i}) = \infty,
\)
a contradiction. Hence $|x|^p F$ vanishes.
Similarly $|x|^p (1-F) \to 0$, as $x \to \infty$.

(\ref{lemA1:1}) $\Leftrightarrow$ (\ref{lemA1:3}).
Note that $dF = d\tilde{F}$ for $x \neq 0$.
Integration by parts yields
\(
	\int_{\mathbb{R}} |x|^p dF 
	= \bigl[ |x|^p \tilde{F} \bigr]_{-\infty}^\infty + p \int_{-\infty}^\infty |x|^{p-1} |\tilde{F}| dx.
\)
If the LHS is finite (\ref{lemA1:1}), then the first term in the RHS is $0$ (\ref{lemA1:4}), hence the second term is finite (\ref{lemA1:3}).
Conversely, if the second term is finite (\ref{lemA1:3}), then the first term is $0$ (\ref{lemA1:4}), hence the LHS is finite (\ref{lemA1:1}).

(\ref{lemA1:1}) $\Leftrightarrow$ (\ref{lemA1:2}).
Since
\(
	\int_{\mathbb{R}} |x|^p dF = \int_0^1 |Q|^p du,
\)
the LHS is finite if and only if the right\hyp{}hand side (RHS) is.

(\ref{lemA1:4}) $\Leftrightarrow$ (\ref{lemA1:5}).
Let $u = F(x)$. Then, $\lim_{x\to-\infty} |x|^p \tilde{F} = \lim_{u\to0} (u^{1/p} Q)^p = 0$. Convergence of the other tail can be shown analogously.
\end{proof}

\begin{lem} \label{lem:joint}
Let $d_1$ and $d_2$ be metrics on $\mathbb{D}$.
Then, $X_\alpha$ converges weakly in $d_1$ and in $d_2$ to a limit $X$ that is tight in $d_1$ and in $d_2$ if and only if $X_\alpha$ converges weakly in $d_1\vee d_2$ to a limit $X$ that is tight in $d_1\vee d_2$.
\end{lem}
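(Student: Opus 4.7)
The easy direction ($\Leftarrow$) follows from $d_1, d_2 \leq d_1 \vee d_2$: the identity maps $(\mathbb{D}, d_1 \vee d_2) \to (\mathbb{D}, d_i)$ are continuous, so weak convergence transfers by the continuous mapping theorem, and continuous images of compact sets are compact, yielding tightness of $X$ in each $d_i$.

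For the reverse direction, my plan is to exploit the isometric embedding $\iota : (\mathbb{D}, d_1 \vee d_2) \to (\mathbb{D}, d_1) \times (\mathbb{D}, d_2)$, $\iota(x) \vcentcolon= (x,x)$, where the product carries the supremum metric. I would first establish tightness of $X$ in $d_1 \vee d_2$: given $d_i$-compact sets $K_i$ with $\Pr(X \in K_i) \geq 1 - \varepsilon/2$, the image $\iota(K_1 \cap K_2) = \Delta \cap (K_1 \times K_2)$ is closed in $K_1 \times K_2$ because the diagonal is closed in a Hausdorff product, hence is product-compact; pulling back through the isometry makes $K_1 \cap K_2$ a $d_1 \vee d_2$-compact set, and inclusion-exclusion gives $\Pr(X \in K_1 \cap K_2) \geq 1 - \varepsilon$.

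Second, I would show that $(X_\alpha, X_\alpha)$ is asymptotically tight in the product. Prohorov's converse \cite[Theorem 1.3.9]{vw1996} applied to each given $d_i$-convergence furnishes $d_i$-compact $K_i$ with $\liminf_\alpha P^\ast(X_\alpha \in K_i^\delta) \geq 1 - \varepsilon/2$ for every $\delta > 0$; a union bound on complements applied to $(K_1 \times K_2)^\delta_{\mathrm{prod}} = K_1^\delta \times K_2^\delta$ then delivers asymptotic tightness of $(X_\alpha, X_\alpha)$ in the product. By Prohorov the net is relatively compact, and any weakly convergent subnet has a tight Borel limit $\mu$ whose $d_i$-marginals are the laws of $X$ in $d_i$, and which is supported on the closed diagonal $\Delta$ by the portmanteau theorem applied to $\Delta^c$.

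The only real subtlety is identifying the pullback $\nu$ of $\mu$ via $\iota$ with the law of $X$ on $(\mathbb{D}, d_1 \vee d_2)$. The key observation is that every $d_1 \vee d_2$-compact set is $d_1$-compact (since the identity is continuous), hence $d_1$-Borel; so $\nu$ and the law of $X$ agree on every $d_1 \vee d_2$-compact because they already agree on $\mathcal{B}(d_1)$ via the first marginal. Tightness of both measures then propagates this agreement to all of $\mathcal{B}(d_1 \vee d_2)$ by splitting a Borel set as $A = (A \cap K) \cup (A \cap K^c)$ for a suitable $d_1 \vee d_2$-compact $K$ with small complementary mass. All convergent subnets thus share the same limit, so by Prohorov the full net $(X_\alpha, X_\alpha)$ converges weakly to the law of $(X,X)$ in the product, which via the isometry $\iota$ is precisely $X_\alpha \leadsto X$ in $d_1 \vee d_2$.
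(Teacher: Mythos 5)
Your easy direction is fine and matches the paper. For the hard direction you take a genuinely different route: embed $(\mathbb{D}, d_1\vee d_2)$ into the product $(\mathbb{D},d_1)\times(\mathbb{D},d_2)$ via the diagonal map $\iota(x)=(x,x)$, show asymptotic tightness of $(X_\alpha,X_\alpha)$ in the product, identify the limit as supported on $\Delta$, and pull back. The paper instead shows directly that $K_1\cap K_2$ is totally bounded under $d_1\vee d_2$ by an explicit $\varepsilon$-net argument and then invokes \cite[Lemma 1.3.13]{vw1996} to convert tightness into weak convergence without rerunning Prohorov/limit-identification by hand. Your route is conceptually natural but substantially heavier machinery for the same conclusion.

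There is, however, a concrete gap. You assert that $\iota(K_1\cap K_2)=\Delta\cap(K_1\times K_2)$ is closed in $K_1\times K_2$ ``because the diagonal is closed in a Hausdorff product.'' That theorem concerns $Y\times Y$ with the \emph{same} topology on both factors. Here the two factors carry the distinct metrics $d_1$ and $d_2$ on the same underlying set, and the usual result does not apply. Concretely, closedness of $\Delta$ in $(\mathbb{D},d_1)\times(\mathbb{D},d_2)$ is equivalent to the statement that a net cannot have a $d_1$-limit $x$ and a $d_2$-limit $y$ with $x\neq y$; this is not a consequence of either $(\mathbb{D},d_1)$ or $(\mathbb{D},d_2)$ being Hausdorff, and is easily false for arbitrary pairs of metrics on the same set. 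Since you also rely on $\Delta$ being closed in the portmanteau step (``supported on the closed diagonal $\Delta$''), the gap appears twice. To be fair, a completeness assumption of the same flavor is silently used by the paper when it passes from total boundedness to compactness of the closure; but the paper does not justify its step by appeal to an inapplicable Hausdorff criterion, whereas you do. To repair your argument you would need to establish, from the actual hypotheses (e.g.\ completeness of $(\mathbb{D},d_1\vee d_2)$, or that one metric dominates the other on the relevant compacts), that $d_1$- and $d_2$-limits of a common sequence coincide; without that your compactness and support-on-$\Delta$ claims are unsupported.
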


\begin{proof}
When we consider $\mathbb{D}$ in metrics $d_1$, $d_2$, and $d_1\vee d_2$, we denote them respectively by $\mathbb{D}_1$, $\mathbb{D}_2$, and $\mathbb{D}_{1\vee2}$.
Sufficiency is trivial.
Necessity is nontrivial since $C_b(\mathbb{D}_{1\vee2})$ is bigger than $C_b(\mathbb{D}_1)$ and $C_b(\mathbb{D}_2)$ in \cite[Definition 1.3.3]{vw1996}.
Note that the algebra generated by $C_b(\mathbb{D}_1)\cap C_b(\mathbb{D}_2)$ separates points of $\mathbb{D}_{1\vee2}$.
Therefore, in light of \cite[Lemma 1.3.13]{vw1996}, it suffices to show that tightness in $\mathbb{D}_1$ and in $\mathbb{D}_2$ implies tightness in $\mathbb{D}_{1\vee2}$.

Fix $\varepsilon>0$. Let $K_1$ and $K_2$ be sets compact under $d_1$ and $d_2$ respectively such that $\Pr(X\in K_1)\geq1-\varepsilon$ and $\Pr(X\in K_2)\geq1-\varepsilon$.
Then, $\Pr(X\in K_1\cap K_2)\geq1-2\varepsilon$.
Now we show $K_1\cap K_2$ is totally bounded under $d_1\vee d_2$.
Take $(t_1,\dots,t_p)$ and $(s_1,\dots,s_q)$ to be finitely many points such that $\varepsilon$\hyp{}$d_1$\hyp{}balls of $(t_1,\dots,t_p)$ cover $K_1$ and $\varepsilon$\hyp{}$d_2$\hyp{}balls of $(s_1,\dots,s_q)$ cover $K_2$.
Then choose a total of at most $pq$ points from each intersection of a $t$\hyp{}ball and an $s$\hyp{}ball, $(u_1,\dots,u_{pq})$, and consider $2\varepsilon$\hyp{}$(d_1\vee d_2)$\hyp{}balls around them.
Since every point in $K_1\cap K_2$ belongs to at least one intersection of a $t$\hyp{}ball and an $s$\hyp{}ball, these balls cover $K_1\cap K_2$ by the triangle inequality.
Therefore, $K_1\cap K_2$ is totally bounded, so its closure in $d_1\vee d_2$ is compact in $d_1\vee d_2$.
Since $\Pr(X\in\overline{K_1\cap K_2})\geq1-2\varepsilon$, $X$ is tight in $d_1\vee d_2$.
\end{proof}


\subsection{Application to Outlier Robustness Analysis} \label{suppB}

We construct a statistical test of outlier robustness analysis.
Recall our setup from \cref{exa:1} and consider the null hypothesis
\(
	H_0 : \|\beta_1-\beta_2\| \leq h
\)
for fixed $h\geq0$.
We assume that $h$ is a scalar while $\beta$ can be a vector, in which case $\|\cdot\|$ is the Mahalanobis distance between $\beta_1$ and $\beta_2$, that is, $[(\hat{\beta}_1-\hat{\beta}_2)'\Sigma^{-1}(\hat{\beta}_1-\hat{\beta}_2)]^{1/2}$ where $\Sigma$ is either an identity, the covariance matrix of $\hat{\beta}_1-\hat{\beta}_2$, or some other positive definite symmetric matrix.
The natural test statistic to use is $\|\hat{\beta}_1-\hat{\beta}_2\|$ ($\Sigma$ may be estimated consistently).
Let $\alpha\in(0,1)$ be the size of the test.
Our results imply that the variance $\Sigma$ of the difference $\hat{\beta}_1-\hat{\beta}_2$ can be estimated either by the analytic formula or by the bootstrap.
Note that if $h>0$, the null hypothesis is composite. 
The critical value $c_\alpha$ satisfies
\(
	\sup_{\|v\|\leq1} \Pr \bigl( \|hv+\xi\|^2 > c_\alpha \bigr) \leq \alpha
\)
for $\xi \sim N(0,\Sigma)$.
If $\beta$ is a scalar, it reduces to
$\Pr \bigl( (h+\xi)^2>c_\alpha \bigr) = \alpha$
for $\xi\sim N(0,\Var(\hat{\beta}_1-\hat{\beta}_2))$.

We reinvestigate the outlier robustness analysis in \cite{anrr2016}.
They tackle the long-standing question of whether and how democracy affects economic growth.
They find that after 25 years from permanent democratization, GDP per capita is about 20\% higher than without democratization, and check robustness of their results to outliers of the error term.
We revisit their fixed effects regressions and conduct the outlier robustness tests proposed above.

The first\hyp{}stage equation is
\[
	\text{\em Democracy}_{i,t} = \sum_{s=1}^4 \pi_s \text{\em Wave}_{i,t-s} + \sum_{s=1}^4 \phi_s \log \text{\em GDP}_{i,t-s} + \theta_i + \eta_t + v_{i,t},
\]
where $\text{\em Wave}_{i,t}$ is the instrumental variable (IV) constructed from the democracy indicators of nearby countries that share similar political history to country $i$.
The panel data is unbalanced; each country has a varying number of observations.
Let $t_i$ be the year of country $i$'s first appearance in the sample and $T_i$ be the number of observations country $i$ has.
Then, $i$'s time array spans $t_i,t_i+1,\dots,t_i+T_i-1$.

In addition to regression coefficients, \cite{anrr2016} report three parameters.
The {\em long\hyp{}run effect of democracy}, $\beta_5 \vcentcolon= \beta_0/(1-\beta_1-\beta_2-\beta_3-\beta_4)$, represents the impact on $\log \text{\em GDP}_{i,\infty}$ of the transition from non\hyp{}democracy $D_{i,t-1}=0$ to permanent democracy $D_{i,t+s}=1$ for every $s\geq0$.
The {\em effect of transition to democracy after 25 years}, $\beta_6 \vcentcolon= e_{25}$ where $e_j = \beta_0+\beta_1 e_{j-1}+\beta_2 e_{j-2}+\beta_3 e_{j-3}+\beta_4 e_{j-4}$ and $e_0=e_{-1}=e_{-2}=e_{-3}=0$, represents the impact on $\log \text{\em GDP}_{i,25}$ of the transition from $D_{i,t-1}=0$ to $D_{i,t+s}=1$ for $0\leq s\leq25$.
{\em Persistence of the GDP process}, $\beta_7 \vcentcolon= \beta_1+\beta_2+\beta_3+\beta_4$, represents how persistently a unit change in $\log \text{\em GDP}$ remains.

To check robustness to outliers, \cite{anrr2016} carry out same regression excluding observations that have large residuals.
For notational convenience, let
\begin{align*}
	x_{i,t}&\vcentcolon=(\text{\em Democracy}_{i,t},\log\text{\em GDP}_{i,t-1},\cdots,\log\text{\em GDP}_{i,t-4}, \\
	&\hspace{140pt}\mathbbm{1}_{i=1},\cdots,\mathbbm{1}_{i=N},\mathbbm{1}_{t=0},\cdots,\mathbbm{1}_{t=T})', \\
	\beta&\vcentcolon=(\beta_0,\beta_1,\cdots,\beta_4,\alpha_1,\cdots,\alpha_N,\delta_1,\cdots,\delta_T)', \\
	z_{i,t}&\vcentcolon=(\text{\em Wave}_{i,t-1},\cdots,\text{\em Wave}_{i,t-4},\log\text{\em GDP}_{i,t-1},\cdots,\log\text{\em GDP}_{i,t-4}, \\
	&\hspace{140pt}\mathbbm{1}_{i=1},\cdots,\mathbbm{1}_{i=N},\mathbbm{1}_{t=0},\cdots,\mathbbm{1}_{t=T})', \\
	\pi&\vcentcolon=(\pi_1,\cdots,\pi_4,\phi_1,\cdots,\phi_4,\theta_1,\cdots,\theta_N,\eta_1,\cdots,\eta_T)'.
\end{align*}
Outliers are defined by $|\hat{\varepsilon}_{i,t}|\geq1.96\,\hat{\sigma}_\varepsilon$, where $\hat{\sigma}_\varepsilon$ is the estimated homoskedastic standard error of $\varepsilon$,%
\footnote{The purpose of $\hat{\sigma}_\varepsilon$ is normalization.
\cite{anrr2016} do use heteroskedasticity-robust standard errors for inference.}
\(
	\hat{\sigma}_\varepsilon^2 \vcentcolon= \frac{1}{n}\sum_{i=1}^n\frac{1}{T_i}\sum_{t=t_i}^{t_i+T_i-1}(y_{i,t}-x_{i,t}'\hat{\beta})^2,
\)
and, for the IV model, also by $|\hat{v}_{i,t}|\geq1.96\,\hat{\sigma}_v$, where
\(
	\hat{\sigma}_v^2 \vcentcolon= \frac{1}{n}\sum_{i=1}^n\frac{1}{T_i}\sum_{t=t_i}^{t_i+T_i-1}(x_{i1,t}-z_{i,t}'\hat{\pi})^2.
\)
This means that they are concerned with whether tail observations of the GDP might have disproportionate effects on estimates.
Defining outliers based on $\hat{\varepsilon}$, not $y$, even if they are interested in the effects of outliers of the GDP, is reasonable since sample selection based on $\hat{\varepsilon}$ does not affect the true parameters under some conditions while selection on $y$ certainly does.

Let $\mathbb{F}_n$ be the vector of empirical distribution functions of $\frac{1}{T_i}\sum_t x_{i,t}y_{i,t}$ and $\mathbb{Q}_n$ the vector of marginal empirical quantile functions of $\frac{1}{T_i}\sum_t x_{i,t}y_{i,t}$.
Note that, with $w_{i,t} = \mathbbm{1}\{|\hat{\varepsilon}_{i,t}|\geq 1.96 \hat{\sigma}_\varepsilon\}$, the full\hyp{}sample and outlier\hyp{}removed OLS estimators are
\begin{align*}
	\hat{\beta}^1_{\text{OLS}} &= \Biggl( \frac{1}{n} \sum_{i=1}^n \frac{1}{T_i} \sum_{t=t_i}^{t_i+T_i-1} x_{i,t} x_{i,t}' \Biggr)^{-1} \frac{1}{n} \sum_{i=1}^n \frac{1}{T_i} \sum_{t=t_i}^{t_i+T_i-1} x_{i,t} y_{i,t} \\
	&= \Biggl( \frac{1}{n} \sum_{i=1}^n \frac{1}{T_i} \sum_{t=t_i}^{t_i+T_i-1} x_{i,t} x_{i,t}' \Biggr)^{-1} \int_0^1 \mathbb{Q}_n(u) du, \\
	\hat{\beta}^2_{\text{OLS}} &= \Biggl( \frac{1}{n} \sum_{i=1}^n \frac{1}{T_i} \sum_{t=t_i}^{t_i+T_i-1} x_{i,t} x_{i,t}' w_{i,t} \Biggr)^{-1} \frac{1}{n} \sum_{i=1}^n \frac{1}{T_i} \sum_{t=t_i}^{t_i+T_i-1} x_{i,t} y_{i,t} w_{i,t} \\
	&= \Biggl( \frac{1}{n} \sum_{i=1}^n \frac{1}{T_i} \sum_{t=t_i}^{t_i+T_i-1} x_{i,t} x_{i,t}' w_{i,t} \Biggr)^{-1} \int_0^1 \mathbb{Q}_n(u) d\mathbb{K}_n(u),
\end{align*}
where $\mathbb{K}_n$ is the vector of measures whose $j$th element assigns density
\[
	\frac{\sum_t x_{i,t,j}y_{i,t}w_{i,t}}{\sum_t x_{i,t,j}y_{i,t}} \ \text{to} \ u\in\biggl(\mathbb{F}_{n,j}\biggl(\frac{1}{T_i}\sum_t x_{i,t,j}y_{i,t}\biggr)-\frac{1}{n},\mathbb{F}_{n,j}\biggl(\frac{1}{T_i}\sum_t x_{i,t,j}y_{i,t}\biggr)\biggr].
\]
Assume that $\frac{1}{T_i}\sum_t x_{i,t} y_{i,t}$ has smooth cdfs with $(2+c)$th moments for some $c>0$ and $\hat{\sigma}_\varepsilon$ has a well-defined limit.
Then, our results imply that the joint distribution of $\hat{\beta}^1_{\text{OLS}}$ and $\hat{\beta}^2_{\text{OLS}}$ converges and can be estimated by nonparametric bootstrap.
Similar arguments apply also to the IV estimators.

In a simple case where $\varepsilon$ and $v$ are independent of covariates, outlier removal will not change the true coefficients. So, it seems sensible to set $h=0$, the most conservative choice.
Thus, we test the hypothesis $H_0 : \beta_j^1 = \beta_j^2$.

We carry out nonparametric bootstrap across $i$.
All fixed effects are replaced by dummy variables.
Each draw of country $i$ adds $T_i$ observations to the bootstrap sample; equivalently, we treat each sum, $\frac{1}{T_i}\sum_t x_{i,t}y_{i,t}$, $\frac{1}{T_i}\sum_t y_{i,t}y_{i,t-s}$, and $\frac{1}{T_i}\sum_t z_{i,t}y_{i,t}$, as one observation in order to exploit the i.i.d.\ structure.
Bootstrap runs for 10,000 iterations, in each of which we draw 175 countries for OLS and 174 for IV with replacement.



\begin{sidewaystable}
\singlespacing
\centering
\caption{Comparison of formal and heuristic $p$\hyp{}values for the outlier robustness test in \cite{anrr2016}.}
\label{tbl:joint}
\small
\begin{tabular}{lcccccccccccccc}
\hline\hline
& && \multicolumn{5}{c}{Estimate} && \multicolumn{5}{c}{$p$\hyp{}value for $H_0 : \beta_j^1 = \beta_j^2$} \\
\cline{4-8} \cline{10-14}
& && \multicolumn{2}{c}{OLS} && \multicolumn{2}{c}{IV} && \multicolumn{2}{c}{OLS} & & \multicolumn{2}{c}{IV} \\
\cline{4-5} \cline{7-8} \cline{10-11} \cline{13-14}
& Notation && (1) & (2) && (3) & (4) && (5) & (6) && (7) & (8) \\
\hline
Democracy & $\beta_0$ &&
	$\phantom{0}$0.79 & $\phantom{0}$0.56 && $\phantom{0}$1.15 & $\phantom{0}$0.66 &&
	0.15 & 0.32 && 0.20 & 0.41 \\
	& &&
	$\phantom{0}$(0.23) & $\phantom{0}$(0.20) && $\phantom{0}$(0.59) & $\phantom{0}$(0.44) && \\
log GDP first lag & $\beta_1$ &&
	$\phantom{0}$1.24 & $\phantom{0}$1.23 && $\phantom{0}$1.24 & $\phantom{0}$1.23 &&
	0.60 & 0.74 && 0.70 & 0.82 \\
	& &&
	$\phantom{0}$(0.04) & $\phantom{0}$(0.02) && $\phantom{0}$(0.04) & $\phantom{0}$(0.03) && \\
log GDP second lag & $\beta_2$ &&
	$\phantom{0}\mathllap{-}$0.21 & $\phantom{0}\mathllap{-}$0.20 &&
	$\phantom{0}\mathllap{-}$0.21 & $\phantom{0}\mathllap{-}$0.20 &&
	0.75 & 0.85 && 0.84 & 0.91 \\
	& &&
	$\phantom{0}$(0.05) & $\phantom{0}$(0.03) && $\phantom{0}$(0.05) & $\phantom{0}$(0.04) && \\
log GDP third lag & $\beta_3$ &&
	$\phantom{0}\mathllap{-}$0.03 & $\phantom{0}\mathllap{-}$0.03 &&
	$\phantom{0}\mathllap{-}$0.03 & $\phantom{0}\mathllap{-}$0.03 &&
	0.97 & 0.98 && 0.89 & 0.93 \\
	& &&
	$\phantom{0}$(0.03) & $\phantom{0}$(0.02) && $\phantom{0}$(0.03) & $\phantom{0}$(0.03) && \\
log GDP fourth lag & $\beta_4$ &&
	$\phantom{0}\mathllap{-}$0.04 & $\phantom{0}\mathllap{-}$0.03 &&
	$\phantom{0}\mathllap{-}$0.04 & $\phantom{0}\mathllap{-}$0.03 &&
	0.26 & 0.45 && 0.28 & 0.46 \\
	& &&
	$\phantom{0}$(0.02) & $\phantom{0}$(0.01) && $\phantom{0}$(0.02) & $\phantom{0}$(0.02) && \\
[0.5em]
Long-run effect of democracy & $\beta_5$ &&
	21.24 & 19.32 && 31.52 & 22.63 &&
	0.72 & 0.79 && 0.46 & 0.63 \\
	& &&
	$\phantom{0}$(7.32) & $\phantom{0}$(8.54) && (18.49) & (18.14) && \\
Effect of democracy after 25 years & $\beta_6$ &&
	16.90 & 13.00 && 24.87 & 15.47 &&
	0.29 & 0.46 && 0.27 & 0.49 \\
	& &&
	$\phantom{0}$(5.32) & $\phantom{0}$(5.02) && (13.53) & (10.82) && \\
Persistence of GDP process & $\beta_7$ &&
	$\phantom{0}$0.96 & $\phantom{0}$0.97 && $\phantom{0}$0.96 & $\phantom{0}0.97$ &&
	0.00$\mathrlap{02}$ & 0.12 && 0.00$\mathrlap{4}$ & 0.20 \\
	& &&
	$\phantom{0}$(0.01) & $\phantom{0}\mathllap{(}$0.00$\mathrlap{5)}$ && $\phantom{0}$(0.01) & $\phantom{0}\mathllap{(}$0.00$\mathrlap{5)}$ && \\
[0.5em]
\hline
Number of observations & && 6,336 & 6,044 && 6,309 & 5,579 \\
Number of countries & $n$ && 175 & 175 && 174 & 174 \\
Average number of years & $\overline{T_i}$ && 36.2 & 34.5 && 36.3 & 32.1 \\
\hline\hline
\end{tabular}
\\ {\footnotesize * (1,3) Baseline estimates; (2) Estimates with $|\hat{\varepsilon}_{i,t}|<1.96\,\hat{\sigma}_\varepsilon$; (4) Estimates with $|\hat{\varepsilon}_{i,t}|<1.96\,\hat{\sigma}_\varepsilon$ and $|\hat{v}_{i,t}|<1.96\,\hat{\sigma}_v$; (5,7) $p$\hyp{}values of the formal tests that use the standard errors of $\hat{\beta}_j^1-\hat{\beta}_j^2$; (6,8) ``$p$\hyp{}values'' of the heuristic tests that use the marginal standard errors of $\hat{\beta}_j^1$. Some numbers in Columns (1,2) differ from \cite{anrr2016} since we use our own bootstrap to compute standard errors.}
\end{sidewaystable}


\begin{figure}
\centering
\begin{subfigure}[t]{0.45\textwidth}
\centering
\includegraphics[width=0.93733\textwidth]{
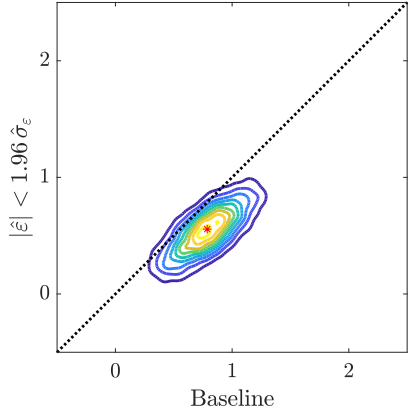}
\caption{Distribution of full\hyp{}sample and outlier\hyp{}removed OLS estimators for the effect of democracy $\beta_0$. $p=0.15$.}
\label{fig-ols-i-dem}
\end{subfigure}
\qquad
\begin{subfigure}[t]{0.45\textwidth}
\centering
\includegraphics[width=\textwidth]{
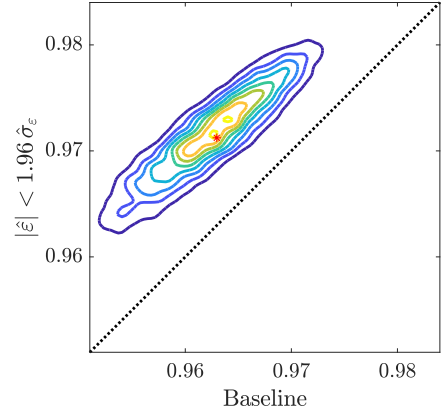}
\caption{Distribution of full\hyp{}sample and outlier\hyp{}removed OLS estimators for persistence of GDP $\beta_7$. $p=0.0002$.}
\label{fig-ols-i-per}
\end{subfigure}

\begin{subfigure}[t]{0.45\textwidth}
\centering
\includegraphics[width=0.93733\textwidth]{
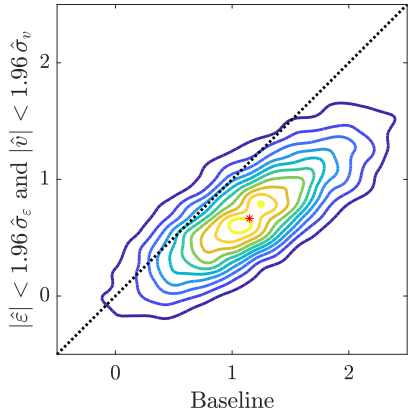}
\caption{Distribution of full\hyp{}sample and outlier\hyp{}removed IV estimators for the effect of democracy $\beta_0$. $p=0.20$.}
\label{fig-iv-i-dem}
\end{subfigure}
\qquad
\begin{subfigure}[t]{0.45\textwidth}
\centering
\includegraphics[width=\textwidth]{
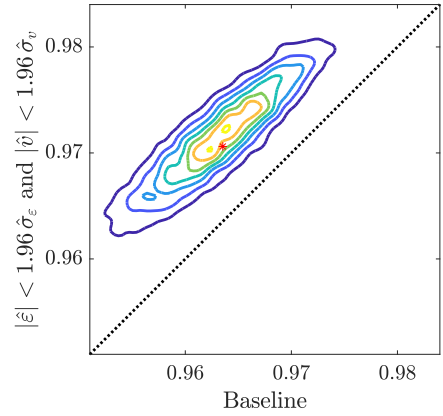}
\caption{Distribution of full\hyp{}sample and outlier\hyp{}removed IV estimators for persistence of GDP $\beta_7$. $p=0.004$.}
\label{fig-iv-i-per}
\end{subfigure}

\caption{Joint distributions of full\hyp{}sample and outlier\hyp{}removed OLS and IV estimators for \cite{anrr2016}. Outliers are defined by $|\hat{\varepsilon}_{i,t}|\geq1.96\,\hat{\sigma}_\varepsilon$ or $|\hat{v}_{i,t}|\geq1.96\,\hat{\sigma}_v$. The black dotted lines indicate the 45 degree. Nonparametric bootstrap runs for 10,000 iterations, randomly sampling $i$. The contours are of kernel density estimators.}
\end{figure}

Most of our reexamination reconfirms robustness to outliers even though we set $h=0$.
The exception is persistence of the GDP process, of which we reject the hypothesis of no change.
\Cref{tbl:joint} lists the estimates and the $p$\hyp{}values for our tests.
Column 1 is the baseline OLS estimates and column 2 is OLS excluding $|\hat{\varepsilon}_{i,t}|\geq1.96\hat{\sigma}_\varepsilon$.
Column 3 is the baseline IV estimates and column 4 is IV excluding observations satisfying either $|\hat{\varepsilon}_{i,t}|\geq1.96\hat{\sigma}_\varepsilon$ or $|\hat{v}_{i,t}|\geq1.96\hat{\sigma}_v$.
Columns 5 to 8 illustrate the utility of our formal tests of outlier robustness.
Column 5 gives the $p$\hyp{}values of the hypotheses that the two OLS coefficients are identical, using the standard error of the difference of two estimators estimated by bootstrap.
Column 6 gives the ``$p$\hyp{}values" of the same hypotheses, but heuristically using the standard error of the marginal distribution of the baseline OLS estimates.
Columns 7 and 8 are the corresponding $p$\hyp{}values for IV coefficients.
The identity of persistence of the GDP process is rejected in formal tests while accepted in heuristic tests at 5\%.
We note that the magnitudes of persistence are very close (0.96 and 0.97), so if we allow bias $h$ of, say, $0.01$, the hypothesis will not be rejected.

Positive correlation of baseline and outlier\hyp{}removed estimators is visualized by the bootstrap distributions.
\Cref{fig-ols-i-dem,fig-ols-i-per} illustrate the joint distributions of baseline and outlier\hyp{}removed OLS estimators, $(\hat{\beta}_0^1, \hat{\beta}_0^2)$ and $(\hat{\beta}_7^1, \hat{\beta}_7^2)$.
\Cref{fig-iv-i-dem,fig-iv-i-per} are the corresponding figures for IV.
The contour plots are based on the kernel density estimators of the bootstrap distributions.
The estimators are positively correlated as anticipated by the fact that they share much of the samples.
Graphically, the tests examine if the red stars (estimators) are close to the 45 degree lines (black dotted lines).


\section*{Acknowledgements}
I thank Anna Mikusheva, Elena Manresa, Kengo Kato, Rachael Meager, Matthew Masten, Abhijit Banerjee, Daron Acemoglu, Isaiah Andrews, Hideatsu Tsukahara, Hidehiko Ichimura, Victor Chernozhukov, Jerry Hausman, Whitney Newey, Alberto Abadie, Joshua Angrist, and Brendan K.\ Beare for helpful comments. Daron Acemoglu and Pascual Restrepo kindly shared data and codes of their paper. This work is supported by the Richard N.\ Rosett Faculty Fellowship and the Liew Family Faculty Fellowship at the University of Chicago Booth School of Business.






\bibliographystyle{imsart-number}
\bibliography{ims-bib}

\end{document}